\newcommand{\UUU}{\color{black}}
\newcommand{\EEE}{\color{black}}
\newcommand{\disp}{\displaystyle}
\newtheorem{thm}{Theorem}[section]
\newtheorem{lem}[thm]{Lemma}
\newtheorem{prop}[thm]{Proposition}
\newtheorem{defin}[thm]{Definition}
\numberwithin{equation}{section}
\def\enne{\mathbb{N}}
\def\erre{\mathbb{R}}
\def\Rz{\mathbb{R}}
\def\P{\mathbb{P}}
\def\E{\mathop{{}\mathbb{E}}}
\def\cE{\mathscr{E}}
\def\cL{\mathscr{L}}
\def\cF{\mathscr{F}}
\def\cB{\mathscr{B}}
\def\eps{\varepsilon}
\def\cP{\mathscr{P}}
\def\OO{\mathcal{O}}
\renewcommand{\d}{{\mathrm d}}
\def\beq{\begin{equation}}
\def\eeq{\end{equation}}
\def\to{\rightarrow}
\def\wto{\rightharpoonup}
\def\wstarto{\stackrel{*}{\rightharpoonup}}
\def\embed{\hookrightarrow}
\def\norm #1{\left\|#1\right\|}
\def\sp #1#2{\left<#1,#2\right>}
\newcommand\ip\sp
\def\qv #1{\langle#1\rangle}
\def\qqv #1{\langle\!\langle#1\rangle\!\rangle}
\newcommand{\haz}{\widehat}
\newcommand{\epsi}{\eps}
\begin{document}
\title[Rate--independent stochastic evolution equations]
{Rate--independent stochastic evolution equations: parametrized solutions}

\author{Luca Scarpa}
\address[Luca Scarpa]{Department of Mathematics, Politecnico di Milano, 
Via E.~Bonardi 9, 20133 Milano, Italy.}
\email{luca.scarpa@polimi.it}
\urladdr{https://sites.google.com/view/lucascarpa}

\author{Ulisse Stefanelli}
\address[Ulisse Stefanelli]{Faculty of Mathematics,  University of Vienna, Oskar-Morgenstern-Platz 1, A-1090 Vienna, Austria,
Vienna Research Platform on Accelerating Photoreaction Discovery,
University of Vienna, W\"ahringer Str. 17, 1090 Vienna, Austria, $\&$ Istituto di Matematica Applicata e Tecnologie Informatiche ``E. Magenes'' - CNR, v. Ferrata 1, I-27100 Pavia, Italy}
\email{ulisse.stefanelli@univie.ac.at}
\urladdr{http://www.mat.univie.ac.at/$\sim$stefanelli}

\subjclass[2010]{35K55, 35R60, 60H15}

\keywords{Doubly nonlinear stochastic equations, 
rate--independence,
 martingale solutions, parametrized solutions, existence, generalized It\^o's formula}   

\begin{abstract}
By extending to the stochastic setting the classical
vanishing viscosity approach
we prove the existence of suitably weak solutions of a class of
nonlinear stochastic evolution equation of rate-independent
type. Approximate solutions are obtained via viscous
regularization. Upon properly rescaling time, these approximations
converge to a parametrized  martingale solution of the
problem in rescaled time,
 where the rescaled noise is given by a general 
square-integrable cylindrical martingale with 
absolutely continuous quadratic variation.  In absence of jumps,
these are strong-in-time martingale solutions of the problem in the
original, not rescaled time. 
\end{abstract}

\maketitle


\section{Introduction}
\setcounter{equation}{0}
\label{sec:intro}

Rate-independent models  arise
as limits of evolution systems featuring two extremely different timescales:
a very fast timescale related to internal relaxation, and a
much slower scale related to external loadings. In such a limit, the
internal time-relaxation scale is sent to zero, and the system
immediately relaxes in reaction to external stimuli. Examples of
rate-independent systems include dry friction, elastoplasticity,
brittle crack growth, damage, phase change, delamination
ferromagnetism, magneto- and piezoelasticity
\cite{Mielke-Roubicek15}. Rate-independence is indeed the signature of
 {\it hysteretic} phenomena. The
reader is referred to  \cite{Mielke-Roubicek15} and  
\cite{Brokate-Sprekels96,Krejci96,Visintin94} for some general theory
on the mathematical theory of hysteresis in connection with PDEs, as
well as for \UUU
a \EEE collection of applications. \UUU Some basic reference material
can be found in Section \ref{sec:deterministic} below. \EEE

This paper is concerned with the existence of suitably weak solutions to the Cauchy problem for the
abstract doubly nonlinear stochastic
evolution equation of {\it rate-independent} type 
\begin{equation}
  \label{eq0}
  A(\partial_tu^d)\, \d t + u^s \, \d W  + \partial \haz B(u)\, \d t
  \ni
  G(\cdot,u)\, \d W\,.
\end{equation}
The trajectory $u: \Omega \times [0,T] \to H$ is defined on the
stochastic basis $(\Omega,\cF,(\cF_t)_{t\in[0,T]},\P)$ and the bounded
time interval $[0,T]$ and takes values in  the  separable Hilbert space
$H$.

Solutions $u$ of equation \eqref{eq0} are asked to be {\it It\^o
processes} of the form 
\begin{equation}\label{ito_process}
  u(t)=u^d(t) + \int_0^tu^s(s)\,\d W(s), \quad t\in[0,T],
\end{equation}
where $u^d$ is an absolutely continuous process
and $u^s$ is a $W$-stochastically integrable process, with $W$ being 
a cylindrical Wiener process on a second separable Hilbert space
$U$. In particular, given the decomposition in \eqref{ito_process}, relation
\eqref{eq0} can be equivalently rewritten as the system
   \begin{equation}
  A(\partial_tu^d)  + \partial \haz B(u)
  \ni 0\,,\quad
  u^s=G(\cdot,u)\,.\label{eq0_bis}
  \end{equation}

The functional $\haz B$ is assumed to be convex lower semicontinuous
and to have compact sublevels, the
nonlinearity $G$ is taken to be suitably smooth.
The {\it rate-independent character} of the evolution ensues from
specifying 
the operator $A:H \to 2^H$ as
\begin{equation}
A(v):=
\left\{
  \begin{array}{ll}
    \disp v/\| v \|_H
    &\ \ \text{if} \ v \not =0 \\[1.5mm]
                    \{w \in H \: : \:\UUU\| w \|_H \EEE\leq 1  \},&\ \
                                                                   \text{if}
                                                                   \ \UUU v  =0  .    \EEE
  \end{array}
  \right.\label{A}
\end{equation}
\UUU We postpone to Section \ref{sec:deterministic} some discussion
about the applicative relevance of this choice. \EEE

The focus of this paper is on proving the existence of suitable
martingale (i.e.
probabilistic weak) solutions to \eqref{eq0}. The main
challenge is here to tame the doubly nonlinear nature of the
evolution in coordination with stochasticity. In fact, problem \eqref{eq0} is
well-known in case $A$ is linear. Well-posedness for analytically weak solutions is addressed by the classical variational theory 
by {Pardoux} \cite{Pard0,Pard} and {Krylov
\&  Rozovski\u \i},\cite{kr}, see also the  monographs \cite{dapratozab,pr}
for a general presentation. In the context of analytically strong solutions, 
existence for stochastic equations in the form \eqref{eq0} with
$A={\rm id}$
has been obtained by {Gess} \cite{Gess}. The reader is referred
also to  the monograph \cite{LiuRo} and
\UUU to the papers \EEE
\cite{mar-scar-diss,mar-scar-ref, mar-scar-div, scar-div} for other
well-posedness   results  
under more general conditions.
The doubly nonlinear case, namely the case where both $A$ and $\partial \haz B$
are
nonlinear, is more involved and, to date, has been solved only in the
{\it rate-dependent} setting of
\cite{ScarStef-SDNL2}, where $A$ is asked to be sublinear but
nondegenerate \UUU (namely, $A$ is of linear growth and coercive on $H$).\EEE
The reader is also referred to 
\cite{SWZ18,ScarStef-SDNL} for the existence theory of a distinct class of
doubly nonlinear stochastic evolution equations, where the term
$A(\partial_t u^d)\, \d t + u^s \, \d W $ is replaced by $\d
\hat A(u)$, with $A$ again sublinear and nondegenerate.

We tackle the existence problem for \eqref{eq0} by means of the so-called {\it vanishing-viscosity}
approximation: the $0$-homogeneous term $A(\partial_t u^d)\, \d t $ is
replaced by the nondegenerate term $\eps
\partial_t u^d \, \d t +  A(\partial_t u^d)\, \d t $, depending on the
small parameter   $\eps>0$. The approximated problems are then solved
by means of the rate-dependent theory of \cite{ScarStef-SDNL2}. A solution of \eqref{eq0} is then
recovered by letting $\eps \to 0$. In the deterministic case, this approach is indeed classical and has been applied to a
variety of different rate-independent settings. Without claiming
completeness, we refer the reader to \cite{EM} as well as to
\cite{Bonaschi20,Fiaschi09,Minotti18,MRS09,MRS12} for a collection of
abstract results. Applications to plasticity \cite{Babadjian12,DalMaso08,DalMaso11}, fracture
\cite{Almi17,Cagnetti08,Knees08,Lazzaroni13,Negri08}, damage \cite{Crismale16,Knees15,Lazzaroni18}, 
adhesive contact \cite{Bonfanti96,Roubicek13}, delamination
\UUU \cite{Scala17}, \EEE
optimal control \cite{Wachsmuth17}, and topology optimization
\cite{Almi,Almi2} are also available. Recall that, although the artificial
viscous term disappears as $\eps \to 0$, the choice of the specific form
of viscosity actually influences the limit
\cite{Stefanelli09,Krejci-Monteiro20}. 

In order to pass to the vanishing-viscosity limit $\eps \to 0$, we extend to the
stochastic setting the by-now
classical {\it time-reparametrization} technique \UUU introduced by
Efendiev \&  Mielke \EEE 
\cite{EM,Mielke14} (see also \cite{Bonfanti96,Gastaldi98,Martins95} for some previous applications of
this technique in the context of dry friction with small
viscosity): One uses the rate-indepedent nature of the problem by
reparametrizing the viscosity solutions $u_\eps$ via their {\it
  arc-length} parametrizations $\hat t_\eps$, so that the reparametrized
solutions $\hat u_\epsi$ are regular in time, uniformly with
respect to $\eps$, and then pass to the
limit simultaneously in  $\hat u_\epsi$ and $\hat t_\eps$. This procedures identifies
so-called {\it parametrized solutions}, a weak solution concept
focusing on  graphs in $[0,T]\times H$, rather than on 
trajectories from $[0,T]$ to $H$. Within this setting, 
possible jumps between states 
are resolved as vertical segments in graph space
\cite[Sec. 3.8.1]{Mielke-Roubicek15}.  Our main result, Theorem
\ref{thm1}, states the existence of {\it parametrized martingale
  solutions} of \eqref{eq0}, see Definition \ref{def:rs}. These are 
weaker than {\it differential martingale solutions} in original time,
see Definition \ref{def:vs}. Still, the two concepts coincide in
absence of jumps, see Subsection \ref{ssec:main}.

With respect to the deterministic case, the time-reparametrization
procedure in the stochastic cases poses some distinct
challenges. The main difficulty is that 
the arc-length reparametrization of solutions to the $\eps$-viscous case
here is a stochastic process $\hat t_\eps:\Omega\times\erre_+ \to [0,T]$,
not a deterministic function, 
and one aims at rescaling the noise term $G(\cdot)\,\d W$ through $\hat t_\eps$
and \UUU studying \EEE the asymptotic behaviour of the rescaled noise.
This poses delicate measurability issues, especially in the direction of
characterizing the rescaled noise by suitable stochastic integration 
with respect to a time-rescaled cylindrical martingale (possibly not Wiener).
To begin with,
one is asked to check that the arc-length parametrization $\hat
t_\eps(\cdot,\tau):\Omega \to [0,T]$ is a bounded stopping time for
each $\tau\geq 0$ and $\eps>0$ (Lemma \ref{lem:t}),
and the whole rescaling procedure pivots around the
study of measurability properties of the family 
$\{\hat t_\eps(\cdot,\tau)\}_{\tau\geq0}$ of stopping times.
Moreover, the time-rescaling procedure inevitably 
demands a rescaling argument of the driving filtration as well,
through the above family of stopping times.
In addition, 
the corresponding time-rescaling of stochastic integrals calls for
handling integration with respect to general continuous
square-integrable cylindrical martingales, as opposed to the classical
integration with respect to cylindrical Wiener process, and optional stopping arguments.
Eventually, a further issue concerns stochastic compactness:
indeed, after the time-rescaling procedure one has to handle the 
convergence of the rescaled noises $\hat M_\eps=W(\hat t_\eps)$,
as well as \UUU that \EEE of their respective stochastic integrals, as the 
viscosity coefficient $\eps$ vanishes.

The paper is organized as follows. In Section \ref{sec:main} we
introduce notation, discuss preliminaries on cylindrical martingales,
and present our assumptions. The  notions of  differential and parametrized martingale
solutions \UUU are \EEE introduced and we 
state our main existence result, Theorem \ref{thm1}. The relation
between differential and parametrized martingale
solutions is detailed in Theorem \ref{thm0}, which is proved in
Section \ref{sec:sol}. The viscous regularization of \eqref{eq0} is
addressed in Section \ref{sec:limit}. Eventually, we pass to the
vanishing-viscosity limit in Section \ref{sec:ri}, providing a proof of
Theorem \ref{thm1}.

\EEE


\section{Preliminaries, setting, and main results}
\label{sec:main}
We introduce here the general framework of the paper, 
recall some useful theoretical preliminaries, 
state the assumptions, and state the main results.

\subsection{Notation}
\label{ssec:notation}
\UUU Let $X$ be an arbitrary Banach space, \EEE $s,r\in[1,+\infty]$, and $T>0$.
\UUU We fix a \EEE filtered probability space
$(\Omega,\cF,(\cF_t)_{t\geq 0},\P)$ \UUU (here and in the following, one
can consider the case $t\in[0,T]$, as well). This is a probability space
$(\Omega,\cF,\P)$ over the $\sigma$-algebra $\cF$, together with a filtration
$(\cF_t)_{t\geq0}\subset \cF$, namely, an increasing collection of  
of $\sigma$-algebras, i.e.,~$\cF_s\subseteq\cF_t$ for all
$0\leq s\leq t $.
We say that $(\Omega,\cF,(\cF_t)_{t\geq0},\P)$ satisfies the
{\it usual conditions} if the filtration 
$(\cF_t)_{t\geq 0}$ is saturated ($\cF_0$ contains all 
negligible sets, i.e.,~the sets
$A\in\cF$ with $\P(A)=0$)
and right-continuous ($\cF_s=\cap_{t>s}\cF_t$ for all $s\geq0$). 

The \EEE usual symbols $L^s(\Omega; X)$ and $L^r(0,T; X)$
indicate the spaces of strongly measurable Bochner-integrable functions 
on $\Omega$ and $(0,T)$, respectively. \UUU Let $(X,\cE)$ be a
measurable space.  A {\it stochastic process} with values in $X$ 
is a function $u:\Omega\times[0,\infty)\to X$.
We say that $u$ is {\it measurable} when 
it is $\cF\otimes\cB([0,\infty))/\cE$-measurable ($\cB([0, \infty))$ are the
Borel sets of $[0, \infty))$). In this case, the Tonelli theorem
ensures that $u(\omega,\cdot):[0,\infty)\to X$
is $\cB([0,\infty))/\cE$-measurable for 
$\P$-almost every $\omega\in\Omega$. Hence, 
every measurable stochastic process $u$
may be
also seen as a random variable 
with values in the space $L^0(0,T;X)$
of measurable $X$-valued functions, 
i.e., $u:\Omega\to L^0(0,T;X)$.

We say that  $u$ is {\it adapted} to the 
filtration $(\cF_t)_{t\geq 0}$ when
$u(\cdot,t):\Omega\to X$ is 
$\cF_t/\cE$-measurable for every $t \geq0$ and that 
$u$ is {\it progressively measurable} when
the restriction $u_{|\Omega\times[0,t]}$ 
is $\cF_t\otimes\cB([0,t])/\cE$-measurable
for every $t\geq 0$.
The $\sigma$-algebra on $\Omega\times[0, \infty)$
generated by the progressively measurable 
processes is called {\it progressive $\sigma$-algebra}
and is denoted by $\cP$.
Clearly, progressive measurability 
implies adaptedness.
Given a stochastic process $u$, the 
filtration $(\cF_t^u)_{t\geq0}$
generated by $u$, or {\it natural filtration}
associated to $u$,
is defined as the smallest   filtration 
that makes $u$ adapted.

\EEE
When $s,r\in[1,+\infty)$ we use the symbol 
$L^s_\cP(\Omega;L^r(0,T; X))$ to specify 
that measurability is intended with respect to 
the corresponding progressive $\sigma$-algebra $\cP$.
In the special case where $s\in(1,+\infty)$, $r=+\infty$, and $X$ is separable and reflexive,
we explicitly set 
\begin{align*}
  &L^s_w(\Omega; L^\infty(0,T; X^*)):=
  \big\{v:\Omega\to L^\infty(0,T; X^*) \text{ weakly*  measurable}\nonumber\\
&\hspace{48mm} \text{with} \ 
  \E\norm{v}_{L^\infty(0,T; X^*)}^s<\infty
  \big\}\,,
\end{align*}
and recall the identification \cite[Thm.~8.20.3]{edwards}
\[
L^s_w(\Omega; L^\infty(0,T; X^*))=
\left(L^{\frac{s}{s-1}}(\Omega; L^1(0,T; X))\right)^*\,.
\]

Given any pair of Banach spaces $X_1$ and $X_2$, 
we use the symbols $\cL(X_1,X_2)$, $\cL_s(X_1,X_2)$, and $\cL_w(X_1,X_2)$
for the spaces of linear continuous operators from $X_1$ to $X_2$
endowed with the norm topology, the strong operator topology,
and the weak operator topology, respectively.
When $X_1$ and $X_2$ are Hilbert spaces, we denote by \UUU
$\cL^2(X_1,X_2)$ the {\it Hilbert-Schmidt} operators from $X_1$ to
$X_2$. Namely,  $L\in\cL^2(X_1,X_2)$  if there exists 
a complete orthonormal system $(u_j)_j$ of $X_1$
such that 
\[
  \sum_{j=0}^\infty\norm{Lu_j}_{X_2}^2<+\infty\,.
\]
One can check that this definition is independent of the  
complete orthonormal system of $X_1$. 
The linear space   $\cL^2(X_1,X_2)$ is a separable 
Hilbert space with respect to 
the scalar product and norm
\[
  (L,T)_{\cL^2(X_1,X_2)}:=
  \sum_{j=0}^\infty(Lu_j, Tu_j)_{X_2}\,,
  \qquad  \norm{L}_{\cL^2(X_1,X_2)}^2:=
   \sum_{j=0}^\infty\norm{Lu_j}_{X_2}^2
   \,,
  \]
respectively. 
  
An operator $L\in \cL^2(X_1,X_2)$ is called {\it nuclear}
if there exist sequences $(u_j)_j\subset B_1^{X_1}$
and $(v_j)_j\subset B_1^{X_2}$ in the closed 
unit balls of $X_1$ and $X_2$, respectively, 
and $\alpha\in\ell^1$ such that 
\[
  Lx=\sum_{j=0}^\infty\alpha_j(x,u_j)_{X_1} v_j \quad
  \forall\,x\in {X_1}\,.
\]
The linear space of nuclear operators 
from $X_1$ to $X_2$ is denoted by $\cL^1(X_1,X_2)$, and 
endowed with the norm
\[
  \norm{L}_{\cL^1(X_1,X_2)}:=
  \inf\left\{
  \norm{\alpha}_{\ell_1}:\ \ 
  Lx=\sum_{j=0}^\infty\alpha_j(x,u_j)_{X_1} v_j \quad
  \forall\,x\in X_1
  \right\} \,.
\]

The important case $X_1=X_2$ requires some additional comment.
In fact, given an operator 
$L\in\cL(X_1,X_1)$
one has that $L^*L\in\cL(X_1,X_1)$ is 
nonnegative and symmetric and one defines   $|L|:=(L^*L)^{1/2}\in\cL(X_1,X_1)$ as the unique 
symmetric linear operator such that $|L|^2=L^*L$.
In this case, one can show that $L\in\cL^1(X_1,X_1)$
if and only if there exists a complete 
orthonormal system $(e_j)_j$ of $X_1$ such that
\[
  \operatorname{Tr}|L|:=
  \sum_{j=0}^\infty(|L|e_j,e_j)_{X_1}<+\infty\,.
\]
Indeed, given such $(e_j)_j$ one can choose
for example $u_j:=e_j$, 
$v_j:=\norm{Le_j}_{X_1}^{-1}Le_j$, and 
$\alpha_j:=\norm{Le_j}_{X_1}$.
In this case, the quantity $\operatorname{Tr}|L|$
is called {\it trace} of $|L|$, its definition 
does not depend on the basis $(e_k)_k$, and
it holds that 
$\operatorname{Tr}|L|= \norm{L}_{\cL^1(X_1,X_1)}$.
The space $\cL^1(X_1,X_1)$ is called 
space of {\it trace-class} operators on $X_1$.
Note that if $L\in\cL^1(X_1,X_1)$ is also symmetric, 
then we have $|L|=L$, and $\operatorname{Tr}L$
is the trace of $L$.

\EEE

\subsection{Preliminaries on cylindrical martingales}
\label{ssec:cyl}
We collect here some preliminaries and definitions 
about the general theory of Hilbert-space-valued cylindrical martingales
and the respective stochastic integration.

For any separable Hilbert space $U$ and 
every filtered probability space $(\Omega,\cF,(\cF_t)_{t\geq0},\P)$
satisfying the usual conditions, the classical formal 
representation of a {\it $U$-cylindrical Wiener process} is 
\beq
  \label{W}
  W=\sum_{k=0}^\infty \beta_k e_k\,,
\eeq
where $(e_k)_k$ is a complete orthonormal system of $U$ and 
$(\beta_k)_k$ is a sequence of independent real-valued Brownian motions.
The series above is not convergent in $U$
unless $U$ has finite dimension. Hence, in order to 
rigorously define the concept of $U$-cylindrical Wiener process
it is instrumental to interpret
  the series \eqref{W} in a suitably enlarged space $U_1$ 
such that $U\embed U_1$ continuously and the inclusion $\iota:U\to U_1$
is Hilbert-Schmidt. In fact, we first define a new norm $\norm{\cdot}_{U_1}$
on $U$ as
\[
  \norm{u}_{U_1}:=\left(\sum_{k=0}^\infty\frac1{(1+k)^2}|(u,e_k)_U|^2\right)^{1/2}\,, \qquad u\in U\,.
\]
Noting that \UUU $(U,\norm{\cdot}_{U_1})$ \EEE is clearly not complete
\UUU in \EEE  $U$, we define $U_1$
as the abstract closure of $U$ with respect to the topology induced by
$\norm{\cdot}_{U_1}$. This
can be equivalently characterised as
the space of formal infinite linear combinations of $(e_k)_k$
with finite norm $\norm{\cdot}_{U_1}$, i.e.,
\[
  U_1=\left\{u=\sum_{k=0}^\infty\alpha_k e_k: \; 
  \sum_{k=0}^\infty\frac{\alpha_k^2}{(1+k)^2}<+\infty\right\}\,.
\]
With this choice, $U_1$ is a separable Hilbert space and the 
natural inclusion $\iota:U\to U_1$ is Hilbert-Schmidt: indeed, 
by the Parseval identity we have 
\[
  \sum_{k=0}^\infty\norm{\iota (e_k)}_{U_1}^2
  =\sum_{k=0}^\infty\norm{e_k}_{U_1}^2
  =\sum_{k=0}^\infty\sum_{i=0}^\infty\frac{|(e_k,e_i)_U|^2}{(1+i)^2}
  =\sum_{k=0}^\infty\frac{\norm{e_k}_U^2}{(1+k)^2}
  =\sum_{k=0}^\infty\frac1{(1+k)^2}<+\infty\,.
\]
Furthermore, the operator $Q_1:=\iota\circ\iota^*\in \cL^1(U_1,U_1)$
is nonnegative and symmetric, 
the infinite sum \eqref{W} is convergent in $U_1$,
and $W$ is a classical $Q_1$--Wiener process on $U_1$.
The following definition is then natural.
\begin{defin}
\UUU A $Q_1$--Wiener process on $U_1$ is called \emph{$U$-cylindrical
  Wiener process}. A $U_1$--valued martingale is called
\emph{$U$-cylindrical martingale}. A $U_1$--valued martingale is
said to be \emph{continuous} (\emph{square-integrable}, respectively) if
it is continuous with values in $U_1$ (square-integrable, respectively).
  \EEE
\end{defin}
 
$U$-Cylindrical Wiener processes are continuous square--integrable martingales on $U_1$
with quadratic variation and tensor quadratic variation given by, respectively, 
\[
  \qv{W}(t)=\operatorname{Tr}(Q_1)t\,, \qquad
  \qqv{W}(t)=Q_1 t\,, \qquad t\geq0\,,
\]
\UUU see \cite[Prop. 2.5.2, p.~50]{LiuRo}. \EEE
More generally, an important class of $U$-cylindrical martingales that 
will play a central role in the paper
are the continuous square--integrable martingales $M$ on $U_1$ with 
tensor quadratic variation 
\beq
  \label{qv_M}
  \qqv{M}(t)=Q_1 \alpha(t)\,, \qquad t\geq0\,,
\eeq
where $\alpha:\Omega\times\erre_+\to\erre$ is a 
Lipschitz-continuous nondecreasing adapted process with $\alpha(0)=0$.

The stochastic integral with respect to such $U$-cylindrical martingales
is defined in terms of the usual $L^2$--stochastic integral 
in the space $U_1$: for details we refer the reader to M\'etivier
\cite[\S~21--22]{metivier}.
For any separable Hilbert space $K$,
the natural class of stochastically-integrable processes
with respect to $M$ is the space of progressively measurable processes
$B$
with values in $\cL^2(Q_1^{1/2}(U_1), K)$ such that 
\[
  \E\int_0^T\norm{B(s)Q_1^{1/2}}_{\cL^2(U_1, K)}^2\,\d \alpha(s)<+\infty.
\]
Exploiting the definitions of $\iota$ and $Q_1$
it is immediate to check that $Q_1^{1/2}(U_1)=\iota(U)$, so that 
$\cL^2(Q_1^{1/2}(U_1), K)=\cL^2(U,K)$, and 
$\|BQ_1^{1/2}\|_{\cL^2(U_1, K)}=\|B\|_{\cL^2(U,K)}$
for every $B\in\cL^2(U,K)$. Consequently, the stochastic integral with respect 
to a $U$-cylindrical martingale with quadratic variations as in \eqref{qv_M} is 
well-defined on the space
\begin{align*}
  &L^2_\cP(\Omega; L^2(0,T, \d\alpha; \cL^2(U,K)))\\
  &:=
  \left\{B:\Omega\times[0,T]\to\cL^2(U,K)\text{ progr.~meas.:}\quad
  \E\int_0^T\norm{B(s)}_{\cL^2(U, K)}^2\,\d \alpha(s)<+\infty\right\}\,.
\end{align*}
Let us point out that the Lipschitz-continuity of $\alpha$
ensures that $\d\alpha(\omega,\cdot)$ is absolutely continuous with respect to the
Lebesgue measure: this is why $B$ only needs be progressively measurable 
and the integrability condition above reads equivalently 
\[
  \E\int_0^T\norm{B(s)}_{\cL^2(U, K)}^2\alpha'(s)\,\d s<+\infty\,.
\]
We recall for completeness that for any such $B$ the stochastic integral is defined 
as the $L^2(\Omega)$-limit of the usual stochastic integral defined on (predictable)
elementary \UUU processes, \EEE namely
\[
  \int_0^T B(s)\,\d M(s)=
  \lim_{N\to\infty}\sum_{i=0}^{N-1}\mathbbm{1}_{A_i}\xi_i(M(t_{i+1})-M(t_i))
  \qquad\text{in } {L^2(\Omega)}\,,
\]
where $0=t_0<\ldots<t_N=T$ is a partition of $[0,T]$
with $\lim_{N\to\infty}\sup_{i=0,\ldots,N-1}|t_{i+1}-t_{i}|=0$,
$A_i\in\cF_{t_i}$, $\xi_i\in\cL(U_1,K)\cap\cL^2(U,K)$ for every $i=0,\ldots, N$.

\subsection{Variational setting and assumptions}
\label{ssec:assumptions}

We state now the main assumptions on the data of the problem.
These will hold throughout the whole paper, and will not 
be explicitly recalled.

\begin{itemize}\setlength\itemsep{1em}
  \item {\bf Assumption H:}  
  $T>0$ is the final reference time, 
$V$ is a separable reflexive Banach space and $H$ is a
Hilbert space such that $V\embed H$ continuously, densely, and compactly.
The space $H$ is identified with its dual $H^*$, so that we have the
Gelfand triple $
  V\embed H\embed V^*$. We 
 require the existence of a sequence of 
regularising operators $(R_\lambda)_{\lambda>0}\subset\cL(H,V)$ 
such that $R_\lambda\to I$ in $\cL_s(V,V)$ as $\lambda\searrow0$.
Finally, we ask for another separable reflexive Banach space 
$E^*$ such that $H \subset E^*$ compactly and
\begin{equation}
  \label{eq:appro}
 \exists\, 
 c_E>0\,, \ \alpha \in (0,1): \quad   
 \| z \|_H \leq \eta \| z \|_V + \frac{c_E}{\eta^\alpha} \| z
   \|_{E^*} \quad \forall\, z \in V, \ \forall\, \eta>0\,.
\end{equation}
The existence of such $(R_\lambda)_\lambda$ and $E^*$ 
are to be regarded as a structural
compatibility assumptions, and are satisfied in most application
scenarios.  For example, in most applications one has 
$H=L^2(\OO)$, with $\OO$ being a sufficiently regular bounded domain,
and $V=W^{s,p}_0(\OO)$ for some $s>0$ and $p\geq2$: here, 
a natural candidate for $E^*$ could be the dual space $W^{-r,q}(\OO)$
with $q=p/(p-1)$ and $r\in(0,s)$.
Of course, different choices 
of boundary conditions can be handled similarly.

\item {\bf Assumption U0:} $u_0 \in V$.

\item{\bf Assumption A:} 
         Let $A:=\partial \| \cdot \|_H:H\to2^H$, so that $A(u) =
        u/\| u \|_H$ if $u \not =0$ and $A(0)=\{ v \in H  :  \|v
        \|_H\leq 1\}$.
        	We further define the lower semicontinuous convex functional  
	 \[
	\Psi_{\norm{\cdot}}:H\to[0,+\infty]\,, \qquad
	\Psi_{\norm{\cdot}}(z):=
	\begin{cases}
	\| z\|_H \quad&\text{if } \norm{z}_H\leq1\,,\\
	+\infty \quad&\text{if } \norm{z}_H>1\,,
	\end{cases}
	\]
\item{\bf Assumption B:} $\Phi:V\to[0,+\infty)$
	is  twice G\^ateaux--differentiable, with $\Phi(0)=0$. We
	 set $B:=D_{\mathcal G}\Phi:V\to V^*$
         (G\^ateaux--differential), and assume that
	  \[
 	 D_{\mathcal G}B\in C^0(V; \cL_w(V,V^*))\,.
 	 \]
 	 Moreover, we assume that there exist constants $c_B, C_B>0$, 
	 $c_B'\geq0$, and $p\geq2$ such that, for every $z_1,z_2,z\in V$,
 	 \begin{align*}
  	  \ip{B(z_1)-B(z_2)}{v_1-v_2}_{V^*,V} &\geq c_B\norm{z_1-z_2}_V^p-
	  c_B'\norm{z_1-z_2}_H^2\,,\\
  	  \norm{D_{\mathcal G}B(z)}_{\cL(V,V^*)}  &\leq 
  	  C_B\left(1 + \norm{z}_V^{p-2}\right)\,,
 	 \end{align*}
	where $c_B'=0$ if $p=2$. We set for convenience $q:=
        p/(p-1)$ \UUU and denote by $D(B)$ the effective domain of $B$ on $H$.  
        Eventually, we suppose the existence of a reflexive Banach 
        space $Z\subset D(B)$ and a sequence $(P_n)_n\subset\cL(H,Z)$ such that 
        $P_nx\to x$ in $H$ (resp.~$V$) for every $x\in H$ (resp.~$V$),
        $B(P_nx)\to B(x)$ for every $x\in D(B)$, and
        \begin{align*}
        \norm{B(z)}_H&\leq C_B(1+\norm{z}^{p-1}_Z) \quad\forall\,z\in Z\,,\\
        \norm{B(P_nz)}_H&\leq C_B(1+\norm{B(z)}) \quad\forall\,z\in D(B)\,.
        \end{align*}
        \EEE
		
\item  {\bf Assumption G:} $U$ is a separable Hilbert space, 
	$G:[0,T]\times H\to \cL^2(U,H)$  is measurable,
	\UUU$G([0,T]\times V)\subset\cL^2(U,V)$, \EEE
  and there exist two constants $C_G>0$ and $\alpha_G\in(0,1]$ such that,
  for all $z_1,z_2,z_3\in H$, $z\in V$, and $t_1,t_2,t\in[0,T]$ it holds that
  \begin{align*}
    \norm{G(t_1, z_1)-G(t_2,z_2)}_{\cL^2(U,H)}
    &\leq C_G\left(\norm{z_1-z_2}_H
    +|t_1-t_2|^{\alpha_G}\right)\,,\\
    \norm{G(t,z_3)}_{\cL^2(U,H)}&\leq
    C_G\left(1+\norm{z_3}_V^\nu\right)\,,\\
    \UUU \norm{G(t,z)}_{\cL^2(U,V)}&\leq
    C_G\left(1+\norm{z}_V\right)\,,\EEE
  \end{align*}
  where $\nu=1$ if $p>2$ and $\nu\in(0,1)$ if $p=2$.  
  Moreover, the operator 
   $L:[0,T]\times V\to \cL^1(H,H)$ given by
  \[
  L(\cdot,v)=G(\cdot,v)G(\cdot,v)^*D_{\mathcal G}B(v)\,,
  \qquad v\in V\,,
  \]
  satisfies
  \begin{align*}
    &L\in C^0([0,T]\times V; \cL^1(H,H))\,,\\
    &\norm{L(\cdot,v)}_{\cL^1(H,H)}\leq  C_G(1+\norm{v}_V^p)
    \quad\forall\,v\in V\,.
  \end{align*}
  Note that this last requirement implies that 
  for every $t\in[0,T]$ and $v\in V$, not only is the operator 
  $G(t,v)G(t,v)^*D_{\mathcal G}B(v)$ in $\cL(V,V)$, 
  as it would normally be by the assumptions on $B$ and $G$,
  but it also
  extends to a trace--class operator in $\cL^1(H,H)$ whose growth 
  is suitably controlled.
\end{itemize}

An additional term $F(t,u) \, \d t$
on the right-hand side of \eqref{eq0} may also be considered without
particular difficulty. We nevertheless 
prefer to retain the only forcing  $G(t,u) \, \d W$ for the sake of
simplicity. Note however that we cannot simplify further by letting
the forcing to be independent of time, for the rate-independent nature of \eqref{eq0} would
then prevent evolution. 

\subsection{Main results}
\label{ssec:main}
In order to specify our results, let us first discuss some different notions   
of solution for problem \eqref{eq0_bis}. \UUU Also in the
deterministic case, rate-independent systems can be tackled under a
variety of different
approaches. Besides differential formulations, which are sometimes
precluded due to nonsmoothness, various weaker variational or
approximation settings heve been set forth
\cite{Mielke-Roubicek15}. Minimally, one should mention the theory of
{\it energetic solvability} \cite{MielkeTheil,MielkeTheilLevitas},
which is based on the reformulation of the
rate-independent system in terms of an energy balance and of a
quasistationary stability requirement, as well as the already mentioned {\it vanishing
  viscosity} approach, hinging on a limiting process on more
time-regular approximations. 
The reader is referred to \cite{Mielke-Roubicek15} and
\cite{MRS09,Negri10,Roubicek15,Stefanelli09} for a comparison between
these and other \EEE notions of solutions of
rate-independent systems in the deterministic setting.

Let us start by introducing a strong-in-time solution notion, solving
\eqref{eq0} almost everywhere in time, up to changing the underlying
stochastic basis. 

\begin{defin}[Differential martingale solution]
  \label{def:vs}
  A \emph{differential martingale solution} of problem \eqref{eq0_bis} is a family 
  consisting of a filtered probability space
  $(\Omega, \cF,(\cF_t)_{t\in[0,T]},\P)$ satisfying the usual conditions, 
  a $U$-cylindrical Wiener process $W$ on it,
  and a triple $(u, u^d, v)$ with 
   \begin{align*}
  & u \in L^p_{  \cP}( \Omega; C^0([0,T]; H))\cap 
  L^p_{ w}( \Omega; L^\infty(0,T; V))\,,
  && u^d \in L^1_{  \cP}( \Omega; W^{1,1}(0,T; H))\,,\\
  & v \in L^1_{  \cP}( \Omega; L^1(0,T, H))\,,
  &&B( u) \in L^1_{  \cP}( \Omega; L^1(0,T; H))\,,
  \end{align*}
  such that
  \begin{align}
  \label{eq1_var}
  & u(t) = u_0 + \int_0^t\partial_t u^d(s)\,\d s 
  + \int_0^tG(s, u(s))\,\d   W(s)
  &&\forall\,t\in[0,T]\,,\quad  \P\text{-a.s.}\,,\\
  \label{eq2_var}
  & v(t) + B( u(t)) = 0
  &&\text{for a.e.~$t\in(0,T)$}\,,\quad  \P\text{-a.s.}\,,\\
  \label{incl}
  & v(t) \in A(\partial_t  u^d(t)) 
  &&\text{for a.e.~$t\in(0,T)$}\,,\quad  \P\text{-a.s.}
  \end{align}
\end{defin}

Differential martingale solutions cannot be expected to exist in
general, for the nonconvex nature of the potential $\Phi$ may induce
jumps in time. One is hence forced to consider some weaker notion of
solution instead. Among the many options (see the discussion in 
\cite{Mielke-Roubicek15} in the deterministic case) we focus on a
stochastic version of {\it parametrized} solutions. These result by extending
trajectories in graph space, taking into account the rate-independent
nature of the problem. By reparametrizing time via a specific choice
for the arc-length, parametrized solutions are uniformly Lipschitz
continuous. In particular, they do not jump in \UUU rescaled \EEE time. More, precisely,
we introduce the following.

\begin{defin}[Parametrized martingale solution]
  \label{def:rs}
  A \emph{parametrized martingale solution} of problem \eqref{eq0_bis} is a family 
  consisting of a filtered probability space
  $(\Omega, \hat \cF,(\hat \cF_\tau)_{\tau\geq0},\P)$ satisfying the usual conditions, 
  a continuous square-integrable $U$-cylindrical martingale $\hat M$ on it,
  and a quadruplet $(\hat u, \hat u^d, \hat t, \hat v)$ such that, for all $\hat T>0$,
   \begin{align*}
  & \hat u \in L^p_{ \hat\cP}( \Omega; C^0([0,\hat T]; H))\cap 
  L^p_{ w}( \Omega; L^\infty(0,\hat T; V))\,,
  && (\hat u^d,\hat t) \in L^\infty_{  \hat\cP}( \Omega; 
  W^{1,\infty}(0,\hat T; H\times\erre))\,,\\
  & \hat v \in L^1_{  \hat\cP}( \Omega; L^1(0,\hat T, H))\,,
  &&B( \hat u) \in L^1_{  \hat\cP}( \Omega; L^1(0,\hat T; H))\,,
  \end{align*}
  where 
  \begin{align}
  \label{hat_t}
  &\hat t(0)=0\,,\qquad0\leq\hat t(\tau)\leq T
    &&\forall\,\tau\in[0,\hat T]\,, \qquad\P\text{-a.s.}\,,\\
  \label{eq_hat}
  &\hat t'(\tau) + \norm{\partial_t \hat u^d(\tau)}_H=1  \ \
    \text{and} \ \ \hat t'(\tau)\geq0 
  &&\text{for a.e.~$\tau\in(0,\hat T)$}\,, \quad\P\text{-a.s.}\,,\\
  \label{quadratic}
  &\qqv{\hat M}(\tau)=Q_1\hat t(\tau)\,,
  &&\forall\,\tau\in[0,\hat T]\,,\quad\P\text{-a.s.}\,,\\
  \label{eq1_var'}
  & \hat u(\tau) = u_0 + \int_0^\tau\partial_t \hat u^d(s)\,\d s 
  + \int_0^{\tau}G(\hat t(\sigma), \hat u(\sigma))\,\d  \hat M(\sigma)
  &&\forall\,\tau\in[0,\hat T]\,,\quad  \P\text{-a.s.}\,,\\
  \label{eq2_var'}
  & \hat v(\tau) + B( \hat u(\tau)) = 0
  &&\text{for a.e.~$\tau\in(0,\hat T)$}\,,\quad  \P\text{-a.s.}\,,\\
  \label{incl'}
  & \UUU \hat v \EEE (\tau) \in \partial\Psi_{\norm{\cdot}}(\partial_t  \hat u^d(\tau)) 
  &&\text{for a.e.~$\tau\in(0,\hat T)$}\,,\quad  \P\text{-a.s.}
  \end{align}
\end{defin}

The notion of parametrized martingale solution is strictly weaker than
that of differential martingale solution. An
example in this direction can be found already in the deterministic
case in  
\cite[Ex. 4.4]{EM}.  Still, the two notions coincide, in
case solutions in original time do not jump. More precisely, we
have the following.

\begin{thm}[Differential vs.~parametrized martingale solutions]
 \label{thm0}
 The following statements hold.
 \begin{itemize}
 \item[(i)]
 Let $(\Omega, \cF,(\cF_t)_{t\in[0,T]},\P, W, u, u^d, v)$ be a 
 differential martingale solution for \eqref{eq0_bis}.
 Then, there exist a filtration
 $(\hat\cF_\tau)_{\tau\geq0}$ satisfying the usual conditions
 and a stochastic process
 \[
 \hat t \in \bigcap_{\hat T>0}L^\infty_{\hat\cP}( \Omega; 
 W^{1,\infty}(0,\hat T; \erre))
 \]
 such that,
 setting $\hat u:=u\circ\hat t$, $\hat u^d:=u^d\circ\hat t$, 
 $\hat v:=v\circ \hat t$, and
 $\hat M:=W\circ\hat t$, it holds that
 $(\Omega, \hat\cF,(\hat\cF_t)_{t\in[0,T]},\P, \hat M, \hat u, \hat u^d, \hat v, \hat t)$
 is a parametrized martingale solution for problem \eqref{eq0_bis}.
 \item[(ii)]
 Let $(\Omega, \hat\cF,(\hat\cF_\tau)_{\tau\geq0},\P, \hat M, \hat u, \hat u^d, \hat v, \hat t)$
 be a parametrized martingale solution for problem \eqref{eq0_bis}
 and let $\hat S$ be a finite stopping time such that 
 $\P\{\hat t(\hat S)=T\}=1$ and
 $\|\partial_t\hat u^d\|_H<1$ almost everywhere in $[\![0,\hat
 S]\!] := \{(\omega,t) \in \Omega \times (0,T) \ : \ 0 \leq t \leq \hat
 S(\omega)\}$.
 Then, there \UUU exists \EEE a filtration
 $(\cF_t)_{t\in[0,T]}$ satisfying the usual conditions and
 $\hat t_{|[\![0,\hat S]\!]}$ has an almost surely continuous inverse
 \[
 \tau:\Omega\times[0,T]\to\erre_+
 \]
 such that, setting $u:=\hat u\circ\tau$, $u^d:=\hat u^d\circ \tau$, $v:=\hat v\circ \tau$,
 and $W:=\hat M\circ \tau$,
 it holds that 
 $(\Omega, \cF,(\cF_t)_{t\in[0,T]},\P, W, u, u^d, v)$ is a 
 differential martingale solution for problem \eqref{eq0_bis}.
 \end{itemize}
\end{thm}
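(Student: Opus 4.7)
The plan is to establish (i) and (ii) via the (inverse) arc-length time change, transporting the filtration, the driving martingale, and both sides of the equation through the change of time. In the deterministic case this is classical; here the bulk of the work is the probabilistic bookkeeping.

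For (i), for each $\omega$ I would set
\[
s(t) := t + \int_0^t \norm{\partial_t u^d(\sigma)}_H\,\d\sigma, \qquad t\in[0,T],
\]
and take $\hat t$ to be its continuous inverse on $[0,s(T)]$, extended suitably to $\tau\geq s(T)$ so as to preserve \eqref{hat_t}--\eqref{eq_hat}. Since $s$ is absolutely continuous, strictly increasing, with $s'\geq 1$, the map $\hat t$ is $1$-Lipschitz, non-decreasing, with
\[
\hat t'(\tau) = \frac{1}{1+\norm{\partial_t u^d(\hat t(\tau))}_H}>0
\]
and $\partial_t\hat u^d(\tau)=\partial_t u^d(\hat t(\tau))\,\hat t'(\tau)$ on $[0,s(T)]$, so that \eqref{hat_t}--\eqref{eq_hat} follow directly. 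For each $\tau\geq0$, the identity $\{\hat t(\tau)\leq t\}=\{s(t)\geq\tau\}\in\cF_t$ shows that $\hat t(\tau)$ is a bounded $(\cF_t)$-stopping time, and $\hat\cF_\tau:=\cF_{\hat t(\tau)}$ defines a filtration inheriting right-continuity and saturation from $(\cF_t)$. By optional sampling, $\hat M:=W\circ\hat t$ is a continuous square-integrable $U_1$-valued $(\hat\cF_\tau)$-martingale whose tensor quadratic variation equals $Q_1\hat t(\tau)$, yielding \eqref{quadratic}. The absolutely continuous term of \eqref{eq1_var} pulls back by the usual change of variable, while for the It\^o integral I would invoke the time-change theorem for stochastic integrals with respect to continuous martingales (M\'etivier \cite[\S~22]{metivier}),
\[
\int_0^{\hat t(\tau)} G(s,u(s))\,\d W(s)=\int_0^{\tau} G(\hat t(\sigma),\hat u(\sigma))\,\d\hat M(\sigma),
\]
which gives \eqref{eq1_var'} after setting $\hat u:=u\circ\hat t$, $\hat u^d:=u^d\circ\hat t$, $\hat v:=v\circ\hat t$. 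Then \eqref{eq2_var'} is immediate from \eqref{eq2_var}, and for \eqref{incl'} I would use that $\partial\Psi_{\norm{\cdot}}(z)=A(z)$ whenever $\norm{z}_H<1$: since $\norm{\partial_t\hat u^d}_H=1-\hat t'<1$ wherever $\hat t'>0$, the inclusion $v(t)\in A(\partial_t u^d(t))$ transports to $\hat v\in\partial\Psi_{\norm{\cdot}}(\partial_t\hat u^d)$ under the positive scalar multiplication $\partial_t\hat u^d=\partial_t u^d\,\hat t'$.

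Statement (ii) runs the same argument in reverse. The strict inequality $\norm{\partial_t\hat u^d}_H<1$ on $[\![0,\hat S]\!]$, together with \eqref{eq_hat}, forces $\hat t'>0$ there, so $\hat t_{|[\![0,\hat S]\!]}$ is almost surely strictly increasing and continuous with $\hat t(0)=0$ and $\hat t(\hat S)=T$. Its inverse $\tau$ is almost surely continuous on $[0,T]$, and the identity $\{\tau(t)\leq\sigma\}=\{\hat t(\sigma\wedge\hat S)\geq t\}\in\hat\cF_\sigma$ makes each $\tau(t)$ a bounded $(\hat\cF_\sigma)$-stopping time; accordingly $\cF_t:=\hat\cF_{\tau(t)}$ yields a saturated, right-continuous filtration. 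The composition $W:=\hat M\circ\tau$ is then a continuous $U_1$-valued $(\cF_t)$-martingale with
\[
\qqv{W}(t)=\qqv{\hat M}(\tau(t))=Q_1\hat t(\tau(t))=Q_1 t,
\]
so L\'evy's characterization identifies $W$ with a $U$-cylindrical Wiener process. Applying the time-change formula in reverse recovers \eqref{eq1_var}, and the pointwise relations \eqref{eq2_var}, \eqref{incl} follow by reversing the subdifferential analysis of (i).

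The principal obstacle will be the measurability-theoretic bookkeeping: verifying that $\hat t(\tau)$ and $\tau(t)$ are bounded stopping times for the relevant filtrations, that these filtrations satisfy the usual conditions, and --- most delicately --- that the time-changed stochastic integrals coincide with the integrals against the cylindrical martingale $\hat M$ (or the reconstructed Wiener process $W$) in the precise sense of Section~\ref{ssec:cyl}. Once this is secured, the algebraic identities on the absolutely continuous part and the transfer of the subdifferential inclusion are essentially deterministic.
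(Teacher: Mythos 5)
Your proposal is correct and follows essentially the same route as the paper: define the arc-length process $s(t)=t+\int_0^t\|\partial_t u^d\|_H$, take $\hat t$ as its (extended) inverse, verify $\hat t(\cdot,\tau)$ is a bounded stopping time, set $\hat\cF_\tau:=\cF_{\hat t(\cdot,\tau)}$, invoke optional stopping to get $\hat M$ with $\qqv{\hat M}=Q_1\hat t$, transport the stochastic integral via a time-change identity for continuous martingales, and transport the subdifferential inclusion using the positive $0$-homogeneity of $A$ together with $A\subset\partial\Psi_{\norm\cdot}$ on the unit ball. The only (cosmetic) deviation is in the last step: where you invoke $\partial\Psi_{\norm\cdot}(z)=A(z)$ under the strict bound $\norm z_H<1$ ``wherever $\hat t'>0$'', the paper passes directly through the inclusion $A(z)\subset\partial\Psi_{\norm\cdot}(z)$ valid for all $\norm z_H\le1$, which also covers the residual set $\{\hat t'=0\}$ (i.e., $\tau>\hat T(\omega)$, where $\partial_t\hat u^d=0$ and the inclusion is automatic) without a case split; either formulation is fine.
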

A proof of this statement is provided in Section \ref{sec:sol} below.


We can now state the main result of this paper, which concerns
the existence of parametrized martingale solutions
for problem \eqref{eq0_bis} and 
reads as
follow. 
\begin{thm}[Existence of  parametrized  martingale solutions]
  \label{thm1}
   There exists a  parametrized  martingale solution 
  $(\Omega, \hat\cF,(\hat\cF_\tau)_{\tau\geq0},\P,\hat M, 
  \hat u, \hat u^d, \hat t, \hat v)$ of problem \eqref{eq0_bis}
  which additionally satisfies, for every $\ell\geq2$ and $\hat T>0$,
  \begin{align}
  &\hat u \in L^{p\ell}_{\hat\cP}(\Omega; C^0([0,\hat T]; H))\cap 
  L^{p\ell}_{w}(\Omega; L^\infty(0,\hat T; V))\,,\\
  &\hat v, B(\hat u) \in 
  L^\ell_{\hat\cP}(\Omega; L^1(0,+\infty;H))\cap
  L^{q\ell}_{\hat\cP}(\Omega; L^\infty(0,+\infty; V^*))\,.
  \end{align}
  Moreover, the following energy identity holds
  for every $\tau\geq0$:
  \begin{align}
  \label{en}
  \E\Phi( \hat u(\tau)) 
  + \E\int_0^\tau\left( \hat v(\sigma), \partial_t \hat u^d(\sigma)\right)_H\,\d \sigma
  =\Phi(u_0) 
  +\frac12\E\int_0^\tau\operatorname{Tr}\left[L(\hat t(\sigma), \hat u(\sigma))\right]
  \hat t'(\sigma)\,\d \sigma\,.
\end{align}
\end{thm}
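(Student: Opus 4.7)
The plan is to implement the stochastic vanishing-viscosity scheme outlined in the introduction, combined with the arc-length reparametrization machinery. First I would introduce the viscous approximation
\begin{equation*}
  \eps\,\partial_t u^d_\eps\,\d t + A(\partial_t u^d_\eps)\,\d t + B(u_\eps)\,\d t \ni 0\,,\qquad u^s_\eps = G(\cdot,u_\eps)\,,
\end{equation*}
on a reference stochastic basis carrying a $U$-cylindrical Wiener process $W$, and invoke the rate-dependent theory of \cite{ScarStef-SDNL2} to obtain, for each $\eps>0$, a martingale solution $(u_\eps, u^d_\eps)$ starting from $u_0$. An It\^o formula applied to $\Phi(u_\eps)$, made legitimate by the trace-class assumption on $L=GG^*D_{\mathcal G}B$, would yield the approximate energy identity
\begin{equation*}
  \E\Phi(u_\eps(t)) + \eps\E\!\int_0^t\!\norm{\partial_tu^d_\eps}_H^2\,\d s + \E\!\int_0^t\!\norm{\partial_tu^d_\eps}_H\,\d s = \Phi(u_0) + \tfrac12\E\!\int_0^t\!\operatorname{Tr}L(s,u_\eps)\,\d s\,.
\end{equation*}
Coupled with the coercivity of $B$, the growth of $L$, and Burkholder--Davis--Gundy to promote second moments to $L^\ell$ ones, this delivers $\eps$-uniform bounds of $u_\eps$ in $L^{p\ell}_w(\Omega;L^\infty(0,T;V))$ and of $\partial_tu^d_\eps$ in $L^\ell(\Omega;L^1(0,T;H))$ for every $\ell\ge2$.

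I would then perform the arc-length rescaling by defining the adapted, pathwise strictly increasing, absolutely continuous process
\begin{equation*}
  s_\eps(\omega,t) := t + \int_0^t\norm{\partial_tu^d_\eps(\omega,r)}_H\,\d r\,,
\end{equation*}
and letting $\hat t_\eps$ be its pathwise inverse, extended to all of $\erre_+$ by saturating at $T$. The key measurability step, announced as Lemma \ref{lem:t}, is that $\hat t_\eps(\cdot,\tau)$ is a bounded $(\cF_t)$-stopping time for every $\tau\ge0$, so the time-changed filtration $\hat\cF_\tau^\eps:=\cF_{\hat t_\eps(\tau)}$ (completed and right-continuised) is well defined and satisfies the usual conditions. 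Setting $\hat u_\eps:=u_\eps\circ\hat t_\eps$, $\hat u^d_\eps:=u^d_\eps\circ\hat t_\eps$, $\hat v_\eps:=-B(\hat u_\eps)$, and $\hat M_\eps:=W\circ\hat t_\eps$, an optional-stopping / time-change argument in the spirit of \cite[\S~21--22]{metivier} shows that $\hat M_\eps$ is a continuous square-integrable $U$-cylindrical martingale with tensor quadratic variation $Q_1\hat t_\eps$, and that the rescaled system reads
\begin{equation*}
  \hat u_\eps(\tau) = u_0 + \int_0^\tau\partial_t\hat u^d_\eps(\sigma)\,\d\sigma + \int_0^\tau G(\hat t_\eps(\sigma),\hat u_\eps(\sigma))\,\d\hat M_\eps(\sigma)\,,
\end{equation*}
subject to the fundamental Lipschitz identity $\eps\norm{\partial_t\hat u^d_\eps}_H^2 + \norm{\partial_t\hat u^d_\eps}_H + \hat t_\eps'=1$ almost everywhere. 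This yields the $\eps$-uniform bound $(\hat u^d_\eps,\hat t_\eps)\in L^\infty(\Omega;W^{1,\infty}(0,\hat T;H\times\erre))$ that was not available in original time.

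Finally I would pass to the limit $\eps\to0$ by stochastic compactness: a Jakubowski--Skorokhod representation argument in the product topology of
\[
  C^0([0,\hat T];H_w)\times L^p_w(0,\hat T;V)\times W^{1,\infty}([0,\hat T];H\times\erre)_{\text{weak}*}\times C^0([0,\hat T];U_1)
\]
yields, on a new filtered probability space and along a subsequence, almost-sure convergence of $(\hat u_\eps,\hat u^d_\eps,\hat t_\eps,\hat M_\eps)$ to a limit $(\hat u,\hat u^d,\hat t,\hat M)$. Passage to the limit in the drift is a routine monotonicity / semicontinuity argument via the regularizers $R_\lambda$ and $P_n$, while the rate-independent inclusion \eqref{incl'} follows from the pre-limit characterization $\hat v_\eps\in A(\partial_t\hat u^d_\eps)+\eps\,\partial_t\hat u^d_\eps$ together with the Lipschitz bound $\norm{\partial_t\hat u^d_\eps}_H\le1$, by lower semicontinuity of the Fenchel integrand associated to $\Psi_{\norm{\cdot}}$ once the viscous correction vanishes. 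The energy identity \eqref{en} is then obtained by applying the generalized It\^o formula to $\Phi(\hat u)$ in the limit. The main obstacle is the passage to the limit in the stochastic integral $\int_0^\cdot G(\hat t_\eps,\hat u_\eps)\,\d\hat M_\eps$: since $\hat M_\eps$ is not a Wiener process but a general cylindrical martingale with state-dependent quadratic variation $Q_1\hat t_\eps$, the standard convergence theorems for stochastic integrals against Brownian noise do not apply directly; instead, $\hat M$ must be identified, by a martingale-problem characterization of its real components, as a continuous square-integrable $U$-cylindrical martingale with quadratic variation $Q_1\hat t$ (so that \eqref{quadratic} holds), and the convergence of the integrals must be established within the general integration theory of \cite[\S~22]{metivier}, exploiting the Lipschitz convergence $\hat t_\eps\to\hat t$ to control the brackets uniformly.
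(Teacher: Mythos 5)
Your outline matches the paper's strategy at the structural level: viscous relaxation solved via \cite{ScarStef-SDNL2}, an It\^o-formula energy estimate to get $\eps$-uniform bounds, arc-length reparametrization with $\hat t_\eps(\cdot,\tau)$ bounded stopping times (Lemma~\ref{lem:t}), the rescaled filtration and cylindrical martingale $\hat M_\eps$ with tensor bracket $Q_1\hat t_\eps$ (Lemma~\ref{lem:hat}), and a limit passage. There are, however, two substantive divergences.

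First, a genuinely different route on stochastic compactness. You propose a Jakubowski--Skorokhod representation and a change of probability space; the paper does \emph{not} change the space. After reparametrization, $(\hat u^d_\eps,\hat t_\eps)$ is bounded in $L^\infty(\Omega;W^{1,\infty}(0,\hat T;H\times\Rz))$ and $\hat M_\eps\to\hat M$ in $L^r(\Omega;C^0([0,\hat T];U_1))$ by the H\"older regularity of $W$. Then $\hat u_\eps$ is shown to be \emph{Cauchy} in $L^r(\Omega;C^0([0,\hat T];E^*))$ --- and hence in $H$ via the interpolation assumption \eqref{eq:appro} --- through a Gronwall-type estimate on $\hat u_{\eps_1}-\hat u_{\eps_2}$ that couples \eqref{eq:appro} with Burkholder--Davis--Gundy bounds for the difference of stochastic integrals against $\hat M_{\eps_1},\hat M_{\eps_2}$. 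The limit filtration is then simply the natural filtration of $(\hat u,\hat u^d,\hat M,\hat t)$, and $\hat M$ is identified by the martingale-problem test you describe. Your Skorokhod route is feasible but would require re-establishing the rescaled equation and all measurability/adaptedness on the new basis, overhead the Cauchy argument avoids.

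Second, and this is a genuine gap: your identification of the limit inclusion \eqref{incl'} does not work as stated. You write the rescaled inclusion as $\hat v_\eps\in A(\partial_t\hat u^d_\eps)+\eps\,\partial_t\hat u^d_\eps$ and argue that the viscous part ``vanishes.'' In fact, by $\partial_t\hat u^d_\eps=\partial_t u^d_\eps(\hat t_\eps)\,\hat t'_\eps$, the $0$-homogeneity of $A$, and the correct Lipschitz identity $\hat t'_\eps+\|\partial_t\hat u^d_\eps\|_H=1$ (the identity with the extra $\eps\|\cdot\|_H^2$ term that you wrote is not the one this arc-length produces), the rescaled viscous correction is
\[
\frac{\eps}{1-\|\partial_t\hat u^d_\eps\|_H}\,\partial_t\hat u^d_\eps\,,
\]
whose coefficient is not uniformly small since $\|\partial_t\hat u^d_\eps\|_H$ may approach $1$. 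Lower semicontinuity of the Fenchel integrand of $\Psi_{\|\cdot\|}$ alone does not close the argument. The paper introduces the lsc convex functional $\widehat\Psi_\eps(z)=\Psi(z)+\eps\,\mathcal F(\|z\|_H)$ with $\mathcal F(r)=-r-\ln(1-r)$, so that $\hat v_\eps\in\partial\widehat\Psi_\eps(\partial_t\hat u^d_\eps)$, derives the uniform $L^\ell(\Omega;L^1)$ bound \eqref{est5_hat} on $\eps f(\|\partial_t\hat u^d_\eps\|_H)\partial_t\hat u^d_\eps$, proves the limit It\^o identity (Proposition~\ref{prop:ito}) through the projections $P_n$, and combines Mosco convergence $\widehat\Psi_\eps\to\Psi_{\|\cdot\|}$ (Lemma~\ref{lem:mosco}) with a limsup comparison of the energy identities to obtain $\hat v\in\partial\Psi_{\|\cdot\|}(\partial_t\hat u^d)$ and, finally, the \emph{equality} in \eqref{eq_hat}. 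This entire construction is absent from your sketch and is what actually makes the vanishing-viscosity limit work.
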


This existence result in Section \ref{sec:limit} by means of a
viscous-regularization method.
\UUU Note that the role of the exponent $\ell$ here is to 
specify the order of the moments of the stochastic processes in play, 
namely their integrability in $\Omega$.
\EEE

\UUU
\section{Comparison with the deterministic setting}
\label{sec:deterministic}

Before moving on with the proof of our main results Theorems
\ref{thm0}-\ref{thm1}, let us provide in this section some additional discussion
on the applicative relevance of the rate-independent SPDE \eqref{eq0},
as well as some detail on  our variational setting. Notation and assumptions are as in Section \ref{sec:main}.

To start with, let us step back and consider the classical {\it
  deterministic} situation, where no stochasticity is assumed. In this case, the unknown trajectory $v$ of a general
rate-independent system is a function of
time only, namely $v:[0,T]\to V$. In particular, we assume that $v(t)$
identifies the state of a rate-independent system at time $t\in
[0,T]$, seen as a point in the abstract Hilbert space $V$. The
evolution of the rate-independent system is modeled as the
outcome of a balance of energy-storage and dissipation mechanisms, under
the influence of external actions. More precisely, 
the evolution is assumed to be driven by
the nonlinear deterministic relation
\begin{equation}
  \label{eq:comp1}
A(\partial_t v) +   \partial \Phi (v)= g\quad \text{in $V^*$, a.e. in} \ (0,T)
\end{equation}
starting from some given initial state, namely, $v(0)=v_0$. Recall that the symbol
$\partial_t v$ indicates the time derivative of $v$. The potential  
  $\Phi: V\to
 (-\infty,\infty]$ represents the stored energy of the systems, so
 that the subdifferential  $\partial \Phi (v) $ is the system of
 conservative actions at state $v$ (in this
section, we assume $\partial \Phi$ to be single-valued, so to simplify
notation). On the other hand, $A(\partial_t v)$
 models the system of dissipative actions corresponding to the rate
 $\partial_t v$. Eventually, $g : [0,T]   \to H$ 
 corresponds to some given time-dependent external actions.

 The rate-independence of the system follows by   
 choosing $A$ to be positively $0$-homogeneous, which is indeed the
 case of \eqref{A}. 
  Indeed, by letting $t \mapsto v(t)$ solve \eqref{eq:comp1}, for
all increasing diffeomorphisms $\phi: [0,\hat T) \to [0,T)$ with
$\phi(0)=0$ one has that $v \circ\phi$ solves \eqref{eq:comp1}
with $g$ replaced by $g \circ\phi$.

The balance relation \eqref{eq:comp1} is the departing point for the
general theory of rate-independent systems \cite[Ch.~1,
p.~2]{Mielke-Roubicek15}. Under different different choices of $V$, $H$,
$A$, and $\Phi$, such problem can be obtained as variational
formulation in different settings. A first examples is the scalar {\it play operator} \cite{Visintin94}
$$V=H=\Rz, \quad A(v):=
\left\{
  \begin{array}{ll}
    \disp v/|v|&\ \ \text{if} \ v \not =0 \\[1mm]
    {[-1,1]}&\ \ \text{if} \ v  =0 
  \end{array}
  \right., \quad \Phi(v) = \frac12 v^2$$
acting indeed as building block of more complex hysteretic operators
\cite{Brokate-Sprekels96,Krejci96}. 
Its infinite-dimensional generalization
$$V=H, \quad A(v):=
\left\{
  \begin{array}{ll}
    \disp v/\| v \|_H &\ \ \text{if} \ v \not =0 \\[1mm]
                    \{w \in H \: : \:
\| w \|_H\leq 1  \}&\ \ \text{if} \ v  =0      
  \end{array}
  \right., \quad \Phi (v) = \frac12
\|v\|^2_H $$
is generally referred to as {\it sweeping
  process} \cite{Moreau77}. In this case, \eqref{eq:comp1} is usually
written in form of differential inclusion as
\begin{equation}
  \label{eq:comp11}
\partial_t v \ni N(g-v) \quad \text{in} \ H, \ \text{a.e. in} \ (0,T) 
\end{equation}
where $N(y)$ indicates the normal cone to the unit ball $\{\| x \|_H
\leq 1\}$ at the point $y$.
This rate-independent dynamics arises in connection with a 
arise in connection with different mechanical problems \cite{Monteiro}, including the
flow of a liquid in a cavity, unilateral contact, and plasticity,
possibly in some simplified or finite dimensional setting \cite{MoreauA}.


The operator $A$ itself is often the
subdifferential of a {\it dissipation} potential $D: H \to
[0,+\infty]$, which is assumed to be positively $1$-homogeneous and
convex with $D(0)=0$. Assumption \eqref{A} corresponds indeed to the
choice $D(v) = \| v \|_H$. Note however that the
existence of such a dissipation potential, although common in
applications, is not strictly necessary. Indeed, what is actually needed is that the  instantaneous dissipation
$(A(\partial_t v), \partial_t v)\geq 0$ is nonnegative, which would 
follow  for $A $ monotone with $0\in
A(0)$.

By testing relation \eqref{eq:comp1} on $\partial_t v$ and assuming
sufficient smoothness, one obtains the deterministic energy identity
\begin{align}
 & \Phi(v(\tau)) -(g(\tau),v(\tau))+ \int_0^\tau (A(\partial_t v(\sigma)), \partial_t
   v(\sigma)) \,\d \sigma \nonumber\\
  &\qquad = \Phi(v_0)  -(g(0),v_0) - \int_0^\tau
  (\partial_t g(\sigma), v(\sigma))\, \d \sigma \quad \forall \tau \in
    [0,T]. \label{eq:comp2}
\end{align}
This balance expresses the fact that the discrepancy in complementary
energy $\Phi(v)-(g,v)$ between times $0$ and $\tau$ is balanced by the
dissipated energy  
$\int_0^\tau (A(\partial_t v),\partial_tv)$ and by the work of
external actions $-\int_0^\tau
(\partial_t g , v)$. The proof of this deterministic energy identity hinges on the validity of a chain rule
for the potential $\Phi$.

In many applications, uncertainty and randomness are significant
and cannot be neglected. By incorporating randomness in the
modelization, one aims at delivering a more realistic modelization of
complex systems, combining deterministic dynamics and stochastic
fluctuations. In the setting of rate-independent or hysteretic
systems, the onset of stochastic effects is well-documented. Among
many others, the readers is referred
\cite{Belko,Bertotti,Das,deJager,Pajaro,Sides} for different examples,
ranging from the nanoscale of material science, to genetics, ecology,
to the macroscale of climate.
The mathematical treatment of stochastic differential systems with
hysteresis has also been  developed, see \cite{Bensoussan,Berglund,Berglund2}. One may specifically to mention the body of work revolving around the {\it
       Skorohod problem} \cite{Bernicot, ElKaroui,Lions,Tanaka}. This corresponds to a stochastic version of the sweeping process
     \eqref{eq:comp11} and is discussed in more detail below.

We include stochasticity in the problem in terms of an additional 
external source of stochastic type. In particular, the deterministic
forcing $g(t)\, \d t$ is augmented by a stochastic forcing of
multiplicative type, i.e., possibly also depending on the
state. Namely, we assume that the system \eqref{eq:comp1} is driven by
the forcing
$$g(t)\, \d t  + G(t,u(t))\, \d W$$
for some given $G:[0,T]\times H \to H$. Here, $W$ represents a
cylindrical Wiener process, the choice of this particular noise being
merely motivated by simplicity.

As the system is now stochastic, in addition to space and time, the
state $u$ will also depend on the specific realization in the
filtered probability space $(\Omega,\cF,(\cF_t)_{t\geq0},\P)$. As realizations
of the cylindrical Wiener process are almost surely continuous but not differentiable in
time, the presence of
the stochastic
forcing prevents trajectories $t \mapsto u(t)$ from being smooth. We are hence forced to handle trajectories by decomposing
\begin{equation}
  \d u(t) = \partial_t u^d(t) \, \d t + u^s(t) \, \d
W(t),\label{eq:comp3}
\end{equation}
see \eqref{ito_process}. Here, $\partial_t u^d \, \d t $ and $u^s \,
\d W$ represent the absolutely continuous and singular parts (with respect to the Lebesgue measure on
$[0,T]$) of   the measure $\d u$, respectively. Note that the fact that the singular part of
$\d u$ is absolutely continuous with respect to $\d W$ is inspired by
the theory of stochastic ordinary differential equations and is part
of our modeling assumptions.  

The factorization \eqref{eq:comp3} plays a role in specifying the
stochastic analogue to \eqref{eq:comp1}. In particular, the
deterministic dissipative
term $A(\partial_tv)$ needs to be modified for trajectories which are
not differentiable with respect to time. In the stochastic case, we
consider the augmented   dissipation term
\begin{equation}
  \label{eq:comp4}
  A(\partial_t u^d)\, \d t + u^s \, \d W.
\end{equation}
This decomposition is our main modeling assumption.  The absolutely continuous term still provides a
$0$-homogeneous contribution. On the other hand, we assume that the stochastic part
contributes linearly to the dissipation. We argue that this linear
choice is not restrictive. Note at first that it is
consistent with the deterministic one when the noise is switched off (i.e., for $G=0$). Secondly, rate-independence is
guaranteed. Indeed, the stochastic version of \eqref{eq:comp1} reads
\begin{equation}
  \label{eq:comp5}
A(\partial_t u^d)\, \d t +u^s \, \d W+   B (u)\, \d t \ni g \, \d t +
G(\cdot,u)\, \d  W \quad \text{in $V^*$, a.e. in} \ (0,T)
\end{equation}
with initial condition $u(0)=u_0$. 
By letting $t \mapsto u(t)$ solve \eqref{eq:comp5}, for
all increasing diffeomorphism $\phi: [0,\hat T) \to [0,T)$ with
$\phi(0)=0$ one has that $\hat u(t) = u(\phi(t))$ solves \eqref{eq0}
with $G(\cdot,u)$ replaced by $G(\phi(\cdot),u)$
 and $W$ replaced by $W \circ \phi$, which is a cylindrical continuous martingale. Thirdly, as problem
 \eqref{eq:comp5} decomposes in the system
 \begin{align}
   &A(\partial_t u^d) +   B (u)  \ni g  \quad \text{in $V^*$, a.e. in} \
     (0,T),\label{eq:comp51}\\
   &u^s=
     G(\cdot,u) \quad \text{in $\cL^2(U,H)$, a.e. in} \ (0,T),\label{eq:comp52}
     \end{align}
 alternatively to \eqref{eq:comp4} one could consider the more general form $A(\partial_t u^d)\, \d t +
     S(u^s) \, \d W$, under suitable
     invertibility and boundedness requirements on the operator $S:
     \cL^2(U,H) \to \cL^2(U,H)$, upon changing $G$ accordingly. In this sense, choosing the
     dissipative term to be linear in $u^s$ is no actual restriction.

     Before moving on, let us further comment on the decomposition
     \eqref{eq:comp3} in light of the above-mentioned 
       Skorohod problem. This is a stochastic version of the sweeping process \eqref{eq:comp11}
and reads
\cite{Bernicot,Tanaka}
\begin{equation}
  \label{eq:comp111}
\d u  \in  N(g-u) \, \d t + G\, \d W \quad \text{in} \ H, \
\text{a.e. in} \ (0,T) 
\end{equation}
(note that $g=0$ is often considered \cite{Bernicot,ElKaroui,Lions}). 
By assuming position \eqref{eq:comp52} and $V=H$ one is allowed to rewrite
  \eqref{eq:comp5} as the following doubly nonlinear variant of the 
  Skorohod problem 
  \begin{equation}
  \label{eq:comp112}
\d u  \in  N(g-\partial \Phi(u)) \, \d t + G\, \d W \quad \text{in} \ H, \ \text{a.e. in} \ (0,T). 
\end{equation}
Indeed, using $u^s = G(\cdot,u)$ we have that inclusion \eqref{eq:comp112}
implies 
$$   \partial_t u^d  \in N(g-\partial \Phi(u)) \quad \text{in} \ H, \
\text{a.e. in} \ (0,T) $$
and \eqref{eq:comp51} follows  
by inverting the maximal monotone map $N$, since $N^{-1}=A$.

 To further compare the stochastic case of \eqref{eq:comp5} to its
 deterministic counterpart \eqref{eq:comp4}, one may compute the energy
 balance at the stochastic level. The required chain-rule computation
 for $\Phi$ calls now for the application of an It\^o formula. By
 assuming smoothness we get
\begin{align}
 & \Phi(u(\tau)) -(g(\tau),u(\tau))+ \int_0^\tau (A(\partial_t u^d(\sigma)), \partial_t
   u^d(\sigma)) \,\d \sigma \nonumber\\
  &\qquad = \Phi(u_0)  -(g(0),u_0) - \int_0^\tau
    (\partial_t g(\sigma), u(\sigma))\, \d \sigma \nonumber\\
  &\qquad + \int_0^t {\rm Tr}\,L(\sigma,u(\sigma))  \d \sigma  +
    \int_0^t(\partial \Phi(u(\sigma)),
    G(\sigma,u(\sigma))\, \d W(\sigma))_H
    \quad \P-\text{a.s.},
    \ \  \forall \tau \in
    [0,T]\nonumber
\end{align}
where we use the shorthand notation $L(\sigma,u(\sigma))= G(\sigma,u(\sigma))
    G(\sigma,u(\sigma))^*D_{\mathcal G}\partial \Phi (u(\sigma))$. Note that $u^s$
    does not appear in the latter, as effect of position \eqref{eq:comp52},
    i.e., of the factorization \eqref{eq:comp4}. As $\partial \Phi$ is
    monotone, the operator $L$ is nonnegative. 
Compared with the deterministic energy balance \eqref{eq:comp2}, we
have additional two terms of stochastic origin. By taking
expectations, the last term involving $G$ disappears and we
are left with
\begin{align}
 & \E\Phi(u(\tau)) -\E (g(\tau), u(\tau))+ \E\int_0^\tau (A(\partial_t u^d(\sigma)), \partial_t
   u^d(\sigma)) \,\d \sigma \nonumber\\
  &\qquad = \E\Phi(u_0)  -\E (g(0),u_0) - \E\int_0^\tau
    (\partial_t g(\sigma), u(\sigma))\, \d \sigma \nonumber\\
  &\qquad  + \E  \int_0^t {\rm Tr}\,L(\sigma,u(\sigma))  \d \sigma 
    \quad \P-\text{a.s.},
    \ \  \forall \tau \in
    [0,T]. \label{eq:comp6}
\end{align}
Hence, noise contributes and additional nonnegative term to
the expected energy
balance. In the context of stochastic integration, this is not
surprising. From the modeling viewpoint, the persistence of stochastic
effects in the deterministic energy balance \eqref{eq:comp6} is the signature of
the stochastic nature of the forcing. In the rest of the paper, we let $g=0$ for brevity. Extending
our results to the case $g\not =0$ would be straightforward.

Before closing this section, let us remark that our analysis of
\eqref{eq:comp5} is currently limited to Hilbert spaces, see \cite{EM,MRS12},
as well. From the applicative
viewpoint, one would be interested in considering less regular
settings. Indeed, various applications feature $L^1$-type
dissipations $D$, suggesting to set the problem in 
 a nonreflexive Banach space. This would require an integration theory
 in such spaces,  necessarily
resulting in additional technical intricacies and presently being out of the
reach of our analysis.

\EEE

\section{Differential vs parametrized martingale solutions: Proof
of Theorem \ref{thm0}}
\label{sec:sol}

We devote this section to the comparison of  differential 
 and parametrized martingale solutions. This in particular brings to the proof
of Theorem~\ref{thm0}.

Let $(\Omega, \cF,(\cF_t)_{t\in[0,T]},\P, W, u, u^d, v)$ be a
 differential  martingale 
solution to problem \eqref{eq0_bis} in the sense of Definition~\ref{def:vs}.
We define the random variable 
\[
  \hat T:\Omega\to\erre_+\,, \qquad
  \hat T(\omega):=T + \norm{\partial_tu^d(\omega)}_{L^1(0,T; H)}\,, \quad\omega\in\Omega\,,
\]
and the stochastic process
\[
  \tau_0:\Omega\times[0,T]\to\erre_+\,, \qquad
  \tau_0(t):=t+\int_0^t\norm{\partial_t u^d(s)}_H\,\d s\,, \quad s\in[0,T]\,.
\]
By the regularity of $u^d$, there exists $\Omega_0\in\cF$ with $\P(\Omega_0)=1$
such that $\tau_0(\omega)\in C^0([0,T])$ for all $\omega\in\Omega_0$. 
Consequently,
for all $\omega\in\Omega_0$ the trajectory 
\[
  \tau_0(\omega):[0,T]\to[0,\hat T(\omega)]
\]
is continuous increasing, hence also invertible with continuous increasing inverse 
\[
  \tau_0(\omega)^{-1}:[0,\hat T(\omega)]\to[0,T]\,.
\]
We define the {\it rescaling} process $\hat t$ as
\[
  \hat t:\Omega\times\erre_+\to [0,T]\,,
  \qquad
  \hat t(\omega, \tau):=
  \begin{cases}
  \tau_0(\omega)^{-1}(\tau) \quad&\text{if } \tau\in[0,\hat T(\omega)]\,,
  \;\omega\in\Omega_0\,,\\
  T=\tau_0(\omega)^{-1}(\hat T(\omega))
  \quad&\text{if }\tau>\hat T(\omega)\,,\;\omega\in\Omega_0\,,\\
  0 \quad&\text{if } \omega\in\Omega\setminus\Omega_0\,.
  \end{cases}
\]
We collect the measurability properties of $\tau_0$ in the following lemma.
\begin{lem}
  \label{lem:t_0}
  $\hat t$ is a well--defined  
  progressively measurable continuous nondecreasing process. Moreover,
  $\hat t(\cdot,\tau):\Omega\to[0,T]$ is a bounded stopping time 
   for every $\tau\geq0$.
\end{lem}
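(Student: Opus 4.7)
The plan is to leverage the structural properties of $\tau_0$, which inherits regularity directly from $\partial_t u^d$. Since $u^d \in L^1_\cP(\Omega; W^{1,1}(0,T;H))$, the derivative $\partial_t u^d$ is progressively measurable, and so is the real-valued map $(\omega,s)\mapsto \|\partial_t u^d(\omega,s)\|_H$. By a Fubini/Tonelli argument, the process
\[
  \tau_0(\omega,t) = t + \int_0^t \|\partial_t u^d(\omega,s)\|_H\,\d s
\]
is then a continuous progressively measurable process with $\tau_0(\cdot,s)$ being $\cF_s$-measurable for every $s\in[0,T]$; in particular $\hat T = \tau_0(\cdot,T)$ is $\cF_T$-measurable. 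Moreover, for every $\omega\in\Omega_0$, $t\mapsto \tau_0(\omega,t)$ is strictly increasing on $[0,T]$ (with slope at least $1$), so its inverse $\tau_0(\omega,\cdot)^{-1}:[0,\hat T(\omega)]\to[0,T]$ is continuous and nondecreasing. Extending this by the constant $T$ beyond $\hat T(\omega)$ preserves continuity and monotonicity, and setting $\hat t\equiv 0$ on the $\P$-negligible set $\Omega\setminus\Omega_0$ preserves them $\P$-a.s. This establishes the well-definedness, continuity, monotonicity, and boundedness by $T$ of $\hat t$.

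Next, I would verify the stopping time property. Fix $\tau\geq 0$. For $s\geq T$ we trivially have $\{\hat t(\cdot,\tau)\leq s\}=\Omega\in\cF_s$. For $s\in[0,T)$, on $\Omega_0$ the equivalence $\hat t(\omega,\tau)\leq s$ iff $\tau\leq \tau_0(\omega,s)$ holds by strict monotonicity of $\tau_0(\omega,\cdot)$: indeed, if $\tau\leq\hat T(\omega)$ then applying $\tau_0(\omega,\cdot)$ to both sides of $\tau_0(\omega,\cdot)^{-1}(\tau)\leq s$ yields the claim, while if $\tau>\hat T(\omega)$ then $\hat t(\omega,\tau)=T>s$, so the inequality fails just as $\tau>\hat T(\omega)=\tau_0(\omega,T)>\tau_0(\omega,s)$ fails. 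Consequently,
\[
  \{\hat t(\cdot,\tau)\leq s\} = \bigl(\{\tau_0(\cdot,s)\geq \tau\}\cap\Omega_0\bigr)\cup (\Omega\setminus\Omega_0),
\]
which belongs to $\cF_s$ since $\tau_0(\cdot,s)$ is $\cF_s$-measurable and $\Omega\setminus\Omega_0\in\cF_0\subseteq\cF_s$ by the usual conditions. Thus $\hat t(\cdot,\tau)$ is a bounded stopping time for every $\tau\geq 0$.

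Finally, progressive measurability of $\hat t$ follows from the previous two steps by a standard argument: $\hat t$ is continuous in $\tau$ and, by the stopping time property, adapted to the natural time-changed filtration $\hat\cF_\tau := \cF_{\hat t(\cdot,\tau)}$ (with respect to which a stopping time is, tautologically, measurable), and any continuous adapted process is progressively measurable. The one subtle point to handle carefully is that the definition of $\hat t$ on the exceptional set $\Omega\setminus\Omega_0$ does not spoil measurability; this is exactly where the saturation of $(\cF_t)$ provided by the usual conditions is crucial, ensuring $\Omega\setminus\Omega_0\in\cF_0$ so that it can be safely absorbed into any event $\{\hat t(\cdot,\tau)\leq s\}$. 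I expect this measurability bookkeeping around the null set $\Omega\setminus\Omega_0$ to be the only delicate aspect, since everything else is a transparent consequence of the strict monotonicity of $\tau_0$ and its progressive measurability.
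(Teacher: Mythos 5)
Your proof is correct and reaches the same conclusions, but it organizes the argument differently from the paper. The paper verifies progressive measurability with respect to $(\cF_\tau)_{\tau\geq 0}$ directly, by showing that the sublevel sets $\{(\omega,\tau)\in\Omega\times[0,\bar\tau]:\hat t(\omega,\tau)<\bar t\}$ lie in $\cF_{\bar\tau}\otimes\cB([0,\bar\tau])$; it then reruns essentially the same calculation with the time variable frozen, obtains $\{\hat t(\cdot,\tau)<\bar t\}\in\cF_{\bar t}$, and invokes right-continuity of the filtration to upgrade this to the stopping-time property. You reverse the order: you first prove the stopping-time property directly via the identity $\{\hat t(\cdot,\tau)\leq s\}=(\{\tau_0(\cdot,s)\geq\tau\}\cap\Omega_0)\cup(\Omega\setminus\Omega_0)$, which places the set in $\cF_s$ without appealing to right-continuity (since you work with $\leq$ rather than $<$), and you then derive adaptedness from the bound $\hat t(\cdot,\tau)\leq\tau$ (giving $\cF_{\hat t(\cdot,\tau)}\subseteq\cF_\tau$) together with the standard fact that a stopping time is $\cF$-measurable at itself; continuity plus adaptedness then yields progressive measurability. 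Your route is arguably cleaner in two respects --- it avoids repeating the sublevel-set computation in two different guises, and the stopping-time step is self-contained rather than leaning on right-continuity --- while the paper's route has the minor advantage of being entirely explicit (no appeal to the abstract ``stopping time is measurable at itself'' or ``continuous adapted implies progressive'' facts). Both correctly flag that the usual conditions are needed to absorb $\Omega\setminus\Omega_0$ into $\cF_0$; in fact your comparison of $\hat t\leq s$ against $\tau\leq\tau_0(\cdot,s)$, including the boundary case $\tau>\hat T(\omega)$, is handled precisely and matches the paper's strict-monotonicity argument.
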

\begin{proof}
The fact that $\hat t$ is continuous and non-decreasing
follows directly from its definition.
To prove progressive measurability we need to show that
for every $\bar \tau>0$ the restriction 
$\hat t:\Omega\times[0,\bar \tau]\to[0,T]$
is $\cF_{\bar \tau}\otimes\cB([0,\bar\tau])$--measurable,
i.e.~that given an arbitrary $\bar \tau>0$ and $\bar t\in[0,T]$
it holds 
\[
  \left\{(\omega,\tau)\in\Omega\times[0,\bar\tau]:\;
  \hat t(\omega,\tau)<\bar t\right\} \in 
  \cF_{\bar \tau}\otimes\cB([0,\bar\tau])\,.
\]
In fact, it suffices to argue by replacing $\Omega$ with $\Omega_0$ above, for
$\Omega \setminus \Omega_0 \times [0,\bar\tau]\in 
  \cF_{\bar \tau}\otimes\cB([0,\bar\tau])$. 
Recall that 
by definition of $\tau$ we know that 
$\tau(s)\geq s$ for every $s\in[0,T]$ and,
consequently, also that 
\[
\hat t(\tau)\leq\tau\leq \bar \tau \qquad\forall\,\tau\in[0,\bar\tau]\,.
\]
Hence, if $\bar t> \bar \tau$ then we immediately have
\[
  \left\{(\omega,\tau)\in\Omega_0\times[0,\bar\tau]:\;
  \hat t(\omega,\tau)<\bar t\right\}=\Omega_0\times[0,\bar\tau] \in 
  \cF_{\bar \tau}\otimes\cB([0,\bar\tau])\,.
\]
Alternatively, if $\bar t\in[0,\bar \tau]$
we have, by the monotonicity of $\tau_0$,
the fact that $\bar t\in[0,T]$, and
the bound $\tau_0\leq \hat T$,
\begin{align*}
  &\left\{(\omega,\tau)\in\Omega_0\times[0,\bar \tau]:\;\hat t(\omega,\tau)<\bar t\right\}\\
  &=\left\{(\omega,\tau)\in\Omega_0\times[0,\bar \tau]
  :\;\hat t(\omega,\tau)<\bar t\,,\;
  \tau\leq\hat T(\omega)\right\} \\
  &\qquad\cup
  \left\{(\omega,\tau)\in\Omega_0\times[0,\bar \tau]:
  \;\hat t(\omega,\tau)<\bar t\,,\;
  \tau>\hat T(\omega)\right\}\\
  &=
  \left\{(\omega,\tau)\in\Omega_0\times[0,\bar \tau]
  :\;\tau_0(\omega)^{-1}(\tau)<\bar t\,,\;
  \tau\leq\hat T(\omega)\right\}\\
  &\qquad\cup
  \left\{(\omega,\tau)\in\Omega_0\times[0,\bar \tau]
  :\;T< \bar t\,,\;
  \tau>\hat T(\omega)\right\}\\
  &=
  \left\{(\omega,\tau)\in\Omega_0\times[0,\bar \tau]
  :\;\tau<\tau_0(\omega,\bar t)\wedge\hat T(\omega)\right\}\\
  &=
  \left\{(\omega,\tau)\in\Omega_0\times[0,\bar \tau]
  :\;\tau<\tau_0(\omega,\bar t)\right\}\,.
\end{align*}
Since $\tau_0(\cdot,\bar t)$ is $\cF_{\bar t}$--measurable and $\bar t\leq\bar\tau$ this yields 
\[
  \left\{(\omega,\tau)\in\Omega_0\times[0,\bar \tau]:\;\hat t(\omega,\tau)<\bar t\right\}
  \in \cF_{\bar t}\otimes \cB([0,\tau])\subseteq\cF_{\bar \tau}\otimes \cB([0,\tau])\,.
\]
This shows that the stochastic process $\hat t$ is progressively measurable.
Furthermore, with a similar argument 
it is possible to show that $\hat t(\cdot, \tau)$ is a stopping time
for every $\tau\geq0$. Indeed, let arbitrary $\tau\geq0$ and $\bar t\in[0,T]$:
if $\bar t>\tau$ we have 
\[
  \left\{\omega\in\Omega_0:\;\hat t(\omega,\tau)<\bar t\right\}=\Omega_0\in\cF_\tau\,,
\]
while if $\bar t\leq \tau$ we get, as before,
\[
  \left\{\omega\in\Omega_0:\;\hat t(\omega,\tau)<\bar t\right\}=
  \left\{\omega\in\Omega_0:\;\tau<\tau_0(\omega,\bar t)\right\}\in\cF_{\bar t}\,.
\]
Recalling that the filtration $(\cF_t)_{t\geq0}$ is assumed to be right--continuous,
this shows indeed that $\hat t(\cdot,\tau)$ is a stopping time for every $\tau\geq0$.
\end{proof}

Now, we define
\begin{align*}
  \hat u: \Omega\times\erre_+\to V\,,& \qquad
  \hat u(\omega,\tau):=
  u(\omega,\hat t(\omega,\tau))\,, &&(\omega,\tau)\in\Omega\times\erre_+\,,\\
  \hat u^d: \Omega\times\erre_+\to H\,,& \qquad
  \hat u^d(\omega,\tau):=
  u^d(\omega,\hat t(\omega,\tau))\,, &&(\omega,\tau)\in\Omega\times\erre_+\,,\\
  \hat G:\Omega\times\erre_+\times H\to\cL^2(U,H)\,,&
  \qquad
  \hat G(\omega,\tau,z):=G(\hat t(\omega,\tau), z)\,, 
  &&(\omega,\tau,z)\in\Omega\times\erre_+\times H\,,\\
  \hat M: \Omega\times\erre_+\to U_1\,,& \qquad
  \hat M(\omega,\tau):=
  W(\omega,\hat t(\omega,\tau))\,, &&(\omega,\tau)\in\Omega\times\erre_+\,,\\
  \hat v:\Omega\times\erre_+\to H\,,&
  \qquad
  \hat v(\omega,\tau):=v(\omega,\hat t(\omega,\tau))\,, 
  &&(\omega,\tau)\in\Omega\times\erre_+\,.
\end{align*}
As far as the rescaled filtration is concerned, we note that 
by Lemma~\ref{lem:t_0} it is well-defined
\[
  \hat\cF_{\tau}:=\cF_{\hat t(\cdot, \tau)}=
  \left\{F\in\cF:\; F\cap\{\omega\in\Omega:\hat t(\omega,\tau)\leq t\}
  \in\cF_t\quad\forall\,t\in[0,T]\right\}\,, \qquad
  \hat\cF:=\cF\,.
\]

\begin{lem}
 \label{lem:hat_0}
 The following holds.
 \begin{enumerate}[(i)]
 \item\label{lem:hat1_0} $(\hat\cF_{\tau})_{\tau\geq0}$
 is a filtration satisfying the usual conditions, with associated 
 progressive $\sigma$-algebra denoted by $\hat \cP$.
 \item\label{lem:hat2_0} 
 $\hat t$, $\hat u$, $\hat u^d$, and $\hat v$ are progressively measurable
 on $(\Omega,\hat \cF,(\hat\cF_{\tau})_{\tau\geq0},\P)$, and satisfy 
 \begin{align*}
 \hat t\in W^{1,\infty}_{loc}(0,+\infty) \qquad&\P\text{-a.s.}\,,\\
 \hat u\in C^0([0,+\infty); H)\cap L^\infty(0,+\infty; V) \qquad&\P\text{-a.s.}\,,\\
 \hat u^d\in W^{1,\infty}_{loc}(0,+\infty; H)\,, \qquad&\P\text{-a.s.}\,,\\
 \partial_t \hat u^d = \partial_t u^d(\hat t)\hat t'\,,
 \qquad&\P\text{-a.s.}\,,\\
 \hat t'(\tau) + \norm{\partial_t\hat u^d(\tau)}_H=1
 \qquad&\text{for a.e.~}\tau>0\,,\quad\P\text{-a.s.}\,,\\
 \hat v\in L^1_{loc}(0,+\infty; H)\,, \qquad&\P\text{-a.s.}\,,
 \end{align*}
 \item\label{lem:hat3_0} $\hat M$ is a continuous square-integrable $U$-cylindrical martingale 
 on $(\Omega,\hat \cF,(\hat\cF_{\tau})_{\tau\geq0},\P)$, with 
 tensor quadratic variation given by 
 \[
 \qqv{\hat M}(\tau)=Q_1\hat t(\tau)\,, \qquad\tau\geq0\,.
 \]
 \item\label{lem:hat4_0} For every $Y\in L^2_\cP(\Omega; L^2(0,T; \cL^2(U,H)))$,
 the rescaled process $\hat Y:=Y\circ\hat t$ is 
 stochastically integrable with respect to $\hat M$ 
 on $(\Omega,\hat \cF,(\hat\cF_{\tau})_{\tau\geq0},\P)$ and
 \[
 \int_0^{\hat t(\tau)}Y(s)\,\d W(s)=
 \int_0^\tau \hat Y(\sigma)\,\d\hat M(\sigma) \qquad
 \forall\,\tau\geq0\,,\quad\P\text{-a.s.}
 \]
 In particular,
 $\hat G(\cdot, \hat u)$ is 
 stochastically integrable with respect to $\hat M$ 
 and it holds that
 \[
 \int_0^{\hat t(\tau)}G(s,u(s))\,\d W(s)=
 \int_0^\tau \hat G(\sigma, \hat u(\sigma))\,\d\hat M(\sigma) \qquad
 \forall\,\tau\geq0\,,\quad\P\text{-a.s.}
 \]
 \end{enumerate} 
\end{lem}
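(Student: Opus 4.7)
The plan is to handle the four items in sequence, with (i) and (ii) being essentially bookkeeping around stopping-time $\sigma$-algebras and the inverse function theorem, and (iii)-(iv) invoking the classical time-change theorems for Hilbert-space-valued continuous martingales and their stochastic integrals. For item (i), I would exploit that $\tau\mapsto\hat t(\cdot,\tau)$ is nondecreasing by Lemma~\ref{lem:t_0}, so the monotonicity of stopping-time $\sigma$-algebras ($\cF_S\subseteq\cF_T$ whenever $S\leq T$) gives the filtration property, while saturation is inherited since $\cF_0\subseteq\hat\cF_0$. For right-continuity I would combine the a.s.~continuity of $\tau\mapsto\hat t(\cdot,\tau)$ with the right-continuity of $(\cF_t)_t$: approximating $\hat t(\cdot,\tau)$ by $\hat t(\cdot,\tau+1/n)$ from above yields $\bigcap_{\sigma>\tau}\hat\cF_\sigma=\hat\cF_\tau$ through the standard argument.

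For (ii), fixing $\tau\geq0$, the classical result on composition of a progressively measurable process with a stopping time, applied to $u$, $u^d$, $v$ and to $\hat t(\cdot,\tau)$, yields $\hat\cF_\tau$-measurability of $\hat u(\cdot,\tau)$, $\hat u^d(\cdot,\tau)$, $\hat v(\cdot,\tau)$. Joint progressive measurability follows by combining adaptedness with a.s.~continuity in $\tau$. The pathwise regularity then comes from the inverse function theorem: since $\tau_0(\omega)$ is Lipschitz on $[0,T]$ with $\tau_0'(t)=1+\norm{\partial_t u^d(t)}_H\geq1$, its inverse $\hat t(\omega,\cdot)$ is Lipschitz on $[0,\hat T(\omega)]$ with $\hat t'(\tau)=1/\tau_0'(\hat t(\tau))\in(0,1]$ a.e., and extends by $\hat t'=0$ beyond $\hat T(\omega)$. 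Composing the absolutely continuous $u^d$ with the Lipschitz $\hat t$ gives $\partial_t\hat u^d=\partial_t u^d(\hat t)\,\hat t'$, and substituting back produces $\hat t'+\norm{\partial_t\hat u^d}_H=\hat t'\,\tau_0'(\hat t)=1$ a.e.

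For (iii), viewing $W$ as a continuous $Q_1$-Wiener process on $U_1$ — hence a continuous square-integrable martingale with $\qqv{W}(t)=Q_1 t$ — and using that $\{\hat t(\cdot,\tau)\}_{\tau\geq0}$ is a family of bounded stopping times that is a.s.~continuous and nondecreasing in $\tau$, the classical time-change theorem for continuous Hilbert-space-valued martingales (e.g.~M\'etivier \cite[Ch.~8]{metivier}) yields that $\hat M=W\circ\hat t$ is a continuous square-integrable martingale with respect to $(\hat\cF_\tau)_\tau$ with tensor quadratic variation $\qqv{\hat M}(\tau)=\qqv{W}(\hat t(\tau))=Q_1\hat t(\tau)$, placing $\hat M$ in the class described in Section~\ref{ssec:cyl}. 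For (iv), I would invoke the companion time-change formula for stochastic integrals, proving it first on elementary processes $Y$ via optional stopping and then extending by density. Progressive $\hat\cP$-measurability of $\hat Y=Y\circ\hat t$ is obtained as in (ii), while integrability with respect to $\hat M$ is verified through the change-of-variables identity
\[
\E\int_0^\tau\norm{\hat Y(\sigma)}_{\cL^2(U,H)}^2\,\d\hat t(\sigma)=\E\int_0^{\hat t(\tau)}\norm{Y(s)}_{\cL^2(U,H)}^2\,\d s<+\infty,
\]
which is exactly the admissibility condition of Section~\ref{ssec:cyl} for the driving martingale $\hat M$ with $\alpha=\hat t$. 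The claim for $\hat G(\cdot,\hat u)$ is then just the special case $Y=G(\cdot,u)$, whose square-integrability is guaranteed by assumption \textbf{G} and the regularity of $u$. I expect the main obstacle to lie in item (iii)-(iv): one must carefully pass through the ambient $U_1$ to reach the cylindrical setting, and verify that $\hat t$ being only nondecreasing (indeed constant on $(\hat T,\infty)$) does not impair the time-change theorems, which usually demands handling the non-strictly-increasing portion through a short additional optional stopping argument on the interval $[\![0,\hat T]\!]$.
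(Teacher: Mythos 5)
Your proposal is essentially the paper's proof. For (i) and (ii) you reproduce the same stopping-time bookkeeping (filtration property from $\hat t(\cdot,\sigma)\leq\hat t(\cdot,\tau)$, right-continuity from continuity of $\hat t$ plus right-continuity of $(\cF_t)_t$) and inverse-function/chain-rule arguments, and for (iv) the same elementary-process approximation combined with the It\^o isometry and change of variables. For (iii) the paper does not invoke a packaged time-change theorem: it applies the Optional Stopping Theorem directly to the $U_1$-valued martingale $W$ at the pair of bounded stopping times $\hat t(\cdot,\sigma)\leq\hat t(\cdot,\tau)$, and then, separately, to the $\cL^1(U_1,U_1)$-valued martingale $N(t)=(W\otimes W)(t)-Q_1t$ to identify $\qqv{\hat M}=Q_1\hat t$; since optional stopping requires only monotone (not strictly increasing) bounded stopping times, the concern you flag at the end about the interval where $\hat t$ is constant never materializes and no extra argument is needed.
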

\begin{proof}
   Ad~\ref{lem:hat1}. Let $\sigma,\tau\geq0$, $\sigma\leq\tau$,  be arbitrary:
  then, for every $F\in\hat\cF_{\sigma}$ and for every $t\in[0,T]$,
  since $\hat t(\cdot,\sigma)\leq\hat t(\cdot,\tau)$ almost surely,
   it holds that 
  \[
  F\cap\{\omega\in\Omega:\hat t(\omega,\tau)\leq t\}=
  F\cap\{\omega\in\Omega:\hat t(\omega,\sigma)\leq t\}\cap
  \{\omega\in\Omega:\hat t(\omega,\tau)\leq t\}\,,
  \]
  where $F\cap\{\omega\in\Omega:\hat t(\omega,\sigma)\leq t\}\in\cF_t$
  because $F\in\hat\cF_{\sigma}$ and 
  $\{\omega\in\Omega:\hat t(\omega,\tau)\leq t\}\in\cF_t$
  \UUU since \EEE $\hat t(\cdot,t)$ is a stopping time by Lemma~\ref{lem:t_0}.
  This implies that $F\in\hat\cF_{\tau}$, 
  so that $\hat\cF_{\sigma}\subseteq\hat\cF_{\tau}$:
  by arbitrariness of $\sigma$ and $\tau$, it follows that
   $(\hat\cF_{\tau})_{\tau\geq0}$ is a filtration.
   Moreover, $\hat \cF_{0}=\cF_0$ by definition, hence $\hat \cF_{0}$
   is trivially saturated. Also, let $\tau\geq0$ be arbitrary, 
   and $F\in\hat\cF_{\tau+\delta}$ for every $\delta>0$:
   then, for every $\lambda>0$ fixed and $t\in[0,T]$, one has
   \[
   F\cap \{\omega\in\Omega:\hat t(\omega,\tau)\leq t\}=
   F\cap\bigcap_{\delta\in(0,\lambda)}\{\omega\in\Omega:\hat t(\omega,\tau)\leq t+\delta\}
   \in\cF_{t+\delta}\subseteq\cF_{t+\lambda}\,.
   \]
   By arbitrariness of $\lambda>0$ and the fact that 
   $(\cF_t)_{t\in[0,T]}$ is right-continuous, we get
   \[
   F\cap \{\omega\in\Omega:\hat t(\omega,\tau)\leq t\}\in
   \bigcap_{\lambda>0}\cF_{t+\lambda}=\cF_t\,,
  \]
  from which it follows that $F\in\hat\cF_{\tau}$ by arbitrariness of $t$. Hence,
  also $(\hat\cF_{\tau})_{\tau}$ is right-continuous.\\
   Ad~\ref{lem:hat2}. 
  The $\hat \cP$-measurability of $\hat t$
  follows from the fact that $\hat t$ has continuous 
  trajectories and is adapted to $(\hat\cF_{\tau})_{\tau\geq0}$
  by definition.
  The $\hat\cP$-measurability of 
  $\hat u$, $\hat u^d$, and $\hat v$ follows from the 
  $\cP$-measurability of $u$, $u^d$, and $v$, 
  and the fact that 
  the function 
  $(\omega,t)\mapsto(\omega,\hat t(\omega,t))$
  is $\hat\cP/\cP$-measurable. 
  Furthermore, by definition of $\hat t$ it holds that 
 \[
  \sup_{\tau\geq0}\norm{\hat u(\omega,\tau)}_V=
  \sup_{t\in[0,T]}\norm{u(t)}_V \qquad\P\text{-a.s.}\,,
  \]
  so that
  the regularity of $\hat u$ 
  immediately follows from the 
  continuity of the trajectories of $\hat t$.
  As for the regularity of $\hat u^d$,
  we have that $\hat t$ is almost everywhere differentiable 
  with 
  \[
  \hat t'(\tau) =
  \left(1 + \norm{\partial_t u^d(\hat t(\tau))}_H\right)^{-1} 
  \qquad
  \text{for a.e.~}\tau>0\,,\quad\P\text{-a.s.}\,,
\]
from which 
\[
  \hat t'(\tau) + \norm{\partial_t u^d(\hat t(\tau))}_H\hat t'(\tau)=1
  \qquad
  \text{for a.e.~}\tau>0\,,\quad\P\text{-a.s.}
\]
Hence, $\partial_t u^d(\hat t)\hat t'\in L^\infty(0,+\infty; H)$
and integration by
substitution yields 
\[
  \hat u^d(\tau)=u^d(\hat t(\tau))=
  u_0 + \int_0^{\hat t(\tau)}\partial_t u^d(s)\,\d s
  =u_0 + \int_0^\tau\partial_t u^d(\hat t(\sigma))\hat t'(\sigma)\,\d\sigma\,,
\]
from which $\hat u^d\in W^{1,\infty}_{loc}(0,T; H)$ with 
$\partial_t\hat u^d=\partial_tu^d(\hat t)\hat t'$.
Also, for all $\tau>0$ we have, integrating by substitution, that 
\begin{align*}
  \int_0^{\tau}\norm{\hat v(\sigma)}_H\,\d\sigma&=
  \int_0^{\hat t(\tau)}\norm{v(s)}_H\tau'(s)\,\d s
  \leq\int_0^T\norm{v(s)}_H\norm{\partial_t u^d(s)}_H\,\d s\,,
\end{align*}
so that the regularity of $\hat v$ follows from the ones of $v$ and
$u^d$.\\
Ad~\ref{lem:hat3}.
Again, $\hat M$ is $\hat\cP$--measurable by definition of the 
new filtration $(\hat\cF_{\tau})_{\tau\geq0}$ and  by  the fact that 
$W$ is $\cP$--measurable. Moreover, the continuity of the trajectories 
of $\hat M$ in $U_1$ is a direct consequence of the continuity of the trajectories 
of $W$ in $U_1$ and of $\hat t$. The fact that $\hat M$ is 
a martingale with values in $U_1$ is a consequence of the Optional Stopping Theorem
(see \cite[Thm.~13.5]{metivier})
and the fact that $W$ is a martingale. Indeed, for any arbitrary $\sigma,\tau\geq0$, with $\sigma\leq\tau$,
$\hat t(\cdot,\sigma)$ and $\hat t(\cdot,\tau)$ are bounded stopping times
satisfying $\hat t(\cdot,\sigma)\leq \hat t(\cdot,\tau)$ almost surely, from which 
\[
  \E\Big[\hat M(\tau)\Big|\hat\cF_\sigma\Big]=
  \E\Big[W(\hat t(\cdot,\tau))\Big|\cF_{\hat t(\cdot,\sigma)}\Big]=
  W(\hat t(\cdot,\sigma))=\hat M(\sigma)\,.
\]
Furthermore, by definition of $W$ we have that 
\[
  N(t):=(W\otimes W)(t) - Q_1t\,, \qquad t\in[0,T]\,,
\]
is a continuous $(\cF_t)_{t\in[0,T]}$--martingale 
with values in $\cL^1(U_1,U_1)$. Using the same arguments 
as above, one readily sees that setting
\[
  \hat N(\tau):=N(\hat t(\tau))=
  (\hat M\otimes \hat M)(\tau) - Q_1\hat t(\tau)\,, \qquad \tau\geq0\,,
\]
it holds that $\hat N$ is a continuous martingale with values in $\cL^1(U_1,U_1)$
with respect to the filtration $(\hat\cF_{\tau})_{\tau\geq0}$: it follows then that 
the tensor quadratic variation of $\hat M$ is exactly $Q_1\hat t$.\\
Ad~\ref{lem:hat4}.
Clearly, we have that $\hat Y$ is $\hat\cP$-measurable.
Moreover, for every $\tau\geq0$, integration by substitution and
the regularity of $Y$ yield 
\begin{align*}
  &\E\int_0^\tau\|\hat Y(\sigma)\|_{\cL^2(U,H)}^2\,\d\hat t(\sigma)=
  \E\int_0^\tau\|\hat Y(\sigma)\|_{\cL^2(U,H)}^2
  \hat t'(\sigma)\,\d\sigma\\
  &\qquad=\E\int_0^{\hat t(\tau)}\|Y(s)\|_{\cL^2(U,H)}^2\,\d s
  \leq\norm{Y}^2_{L^2(\Omega;L^2(0,T; \cL^2(U,H)))}<+\infty\,.
\end{align*}
Recalling Subsection~\ref{ssec:cyl}, this shows that $\hat Y$
is stochastically integrable with respect to $\hat M$ on $[0,\tau]$
for every $\tau\geq0$. In order to show the change-of-variable formula 
for the respective stochastic integral, for any $N\in\enne$ let 
$\hat\pi_N:=\{\tau_0=0,\tau_1,\ldots,\tau_N=\tau\}$ be any arbitrary partition of $[0,\tau]$
such that $| \hat \pi_N| \to0$ if $N\to\infty$, 
let $\xi_k\in L^2(\Omega, \hat \cF_{\tau_k};\cL^2(U,H)\cap\cL(U_1,H))$,
$k=0,\ldots,N-1$, such that 
the elementary process
\[
  \hat E_N(\sigma):=
  \sum_{k=0}^{N-1}
  \xi_k\mathbbm{1}_{(\tau_k,\tau_{k+1}]}(\sigma)\,, \qquad \sigma\in[0,\tau]\,,
\]
satisfies
\[
  \lim_{N\to\infty}\E\int_0^\tau\norm{\hat E_N(\sigma) - 
  \hat Y(\sigma)}_{\cL^2(U,H)}^2\,\d\hat t(\sigma)=0\,.
\]
Then, recalling again Subsection~\ref{ssec:cyl}
and the definition of $\hat M$, it holds that 
\begin{align*}
  \int_0^\tau\hat Y(\sigma)\,\d \hat M(\sigma)&=
  \lim_{N\to\infty}\sum_{k=0}^{N-1}
  \xi_k
  (\hat M(\tau_{k+1}) - \hat M(\tau_k)) &&\text{in } L^2(\Omega)\\
  &=
  \lim_{N\to\infty}\sum_{k=0}^{N-1}
  \xi_k
  (W(\hat t(\tau_{k+1})) - W(\hat t(\tau_k))) &&\text{in } L^2(\Omega)\,.
\end{align*}
 Introducing the processes
\[
  E_N(s):=
  \sum_{k=0}^{N-1}
  \xi_k\mathbbm{1}_{(\hat t(\tau_k),\hat t(\tau_{k+1})]}(s)\,, \qquad s\in[0,\hat t(\tau)]\,,
\]
by It\^o's identity and integration by substitution we have
\begin{align*}
  &\E\norm{\sum_{k=0}^{N-1}
  \xi_k(W(\hat t(\tau_{k+1})) - W(\hat t(\tau_k))) - 
  \int_0^{\hat t(\tau)}Y(s)\,\d W(s)}_H^2\\
  &=\E\norm{ \int_0^{\hat t (\tau)}  
  (E_N(s) - Y(s))\,\d W(s)
  }_H^2\\
  &= \E\int_0^{\hat t (\tau)}  
  \norm{E_N(s)  -Y(s)}_{\cL^2(U,H)}^2\,\d s\\
  &=\E\int_0^\tau\norm{\hat E_N(\sigma) - \hat Y(\sigma
  )}_{\cL^2(U,H)}^2\,\d\hat t(\sigma) \to 0\,.
\end{align*}
This yields the desired change-of-variable formula, and we conclude.
\end{proof}

Let us now finally come to the proof of Theorem
\ref{thm0}. Assume $(\Omega, \hat\cF,(\hat\cF_\tau)_{\tau\geq0},\P,
\hat M, \hat u, \hat u^d, \hat v, \hat t)$ to be a differential
martingale solution.
From Lemma~\ref{lem:hat_0} it follows directly \UUU that \EEE
conditions \eqref{hat_t}--\eqref{eq1_var'}
of Definition~\ref{def:rs} hold. Moreover,   from
\eqref{eq2_var}--\eqref{incl} we find that for a.e. $\tau$ 
\[
   \hat v(\tau) + B(\hat u(\tau)) = 0\,, \qquad
  \hat v(\tau)\in A\left(\partial_t u^d(\hat t(\tau))\hat
    t'(\tau)\right) = A(\partial_t \hat u^d(\tau)) \subset \partial
  \Psi_{\norm{\cdot}} (\partial_t \hat u^d(\tau)) \,.
\]
This implies also \eqref{eq2_var'}--\eqref{incl'}
thanks to the definition of $\Psi_{\norm{\cdot}}$.
In particular, we have checked that 
$(\Omega, \hat\cF,(\hat\cF_\tau)_{\tau\geq0},\P, \hat M, \hat u, \hat u^d, \hat v, \hat t)$
is a  parametrized  martingale solution in the sense of Definition~\ref{def:rs}.

Conversely, for any 
parametrized  martingale solution 
$(\Omega, \hat\cF,(\hat\cF_t)_{t\in[0,T]},\P, \hat M, \hat u, \hat u^d, \hat v, \hat t)$
that satisfies $\hat t(\hat S)=T$ almost surely and
$\|\partial_t\hat u^d\|_H<1$
almost everywhere in $[\![0,\hat S]\!]$, it holds that 
$\hat t'>0$ almost everywhere in $[\![0,\hat S]\!]$.
Consequently, 
for almost every $\omega\in\Omega$
the trajectory $\hat t(\omega):[0,\hat S(\omega)]\to[0,T]$ is 
invertible, with inverse $\tau(\omega):[0,T]\to\erre_+$.  In
particular $A(\partial_t \hat u^d(\tau)) = \partial
  \Psi_{\norm{\cdot}} (\partial_t \hat u^d(\tau)) $ and
 the same argument can be replicated reversely 
to obtain a  differential  martingale solution
in the sense of Definition~\ref{def:vs}.

\section{Viscous regularization}
\label{sec:limit} 
In order to prove the existence of  parametrized  martingale solutions,
we firstly tackle a viscous approximation of the problem
in Subsection \ref{ssec:visc}. In Subsection \ref{ssec:visc} we
introduce a viscous regularisation, depending on the
parameter $\eps>0$. A priori estimates on viscous
solutions are then derived in Subsection \ref{sec:ue}. The passage to
the limit as $\eps\to0$, leading to  parametrized  martingale solutions and
the proof of Theorem \ref{thm1}, is detailed in Section \ref{sec:ri}.

A word of warning: henceforth the symbol $C$ stands for any
generic positive constant, possibly depending on data \UUU and  \EEE changing from
line to line, but independent of $\eps$. If needed, \UUU dependency
\EEE of
$C$ will be indicated with pedices.

\EEE

\subsection{Viscous relaxation}
\label{ssec:visc}
For every $\eps>0$, the viscously regularised dissipation potential is
\[
  \Psi_\eps:H\to[0,+\infty)\,, \qquad
  \Psi_\eps(z):=\frac\eps2\norm{z}_H^2 + \|z\|_H\,, \quad z\in H\,.
\]
The subdifferential of $\Psi_\eps$ can be classically computed as
\cite[Thm.~2.10]{barbu-monot}   $A_\eps=\partial\Psi_\eps$ given by
\[
  A_\eps:H\to 2^H\,, \qquad A_\eps(z):=\eps z + A(z)\,, \quad z\in H\,.
\]

We consider the viscously regularised problem 
\beq\label{eq_reg}
  \begin{cases}
  \d u_\eps = (\partial_t u_\eps^d)\,\d t + u_\eps^s\,\d W\,,\\
  u_\eps(0)=u_0\,,\\
  A_\eps(\partial_tu_\eps^d)  + B(u_\eps)
  \ni 0\,,\\
  u_\eps^s=G(\cdot,u_\eps)\,.
  \end{cases}
\eeq
For every $\eps>0$
the doubly nonlinear problem \eqref{eq_reg} admits a martingale solution 
thanks to the existence theory in \cite{ScarStef-SDNL2}.
More specifically, we have the following result.
\begin{prop}[Existence for a viscously regularized problem]
  \label{prop:WP_eps}
  There exists a filtered probability space 
  $(\Omega,\cF,(\cF_t)_{t\in[0,T]},\P)$ satisfying the usual conditions,
  a $U$-cylindrical Wiener process $W$ on it, 
  and a triple $(u_\eps,  u_\eps^d,  v_\eps)$, with
  \begin{align*}
  & u_\eps \in L^p_{  \cP}( \Omega; C^0([0,T]; H))\cap 
  L^p_{ w}( \Omega; L^\infty(0,T; V))\,,
  && u_\eps^d \in L^2_{  \cP}( \Omega; H^1(0,T; H))\,,\\
  & v_\eps \in L^2_{  \cP}( \Omega; L^2(0,T, H))\,,
  &&B( u_\eps) \in L^2_{  \cP}( \Omega; L^2(0,T; H))\,,
  \end{align*}
  such that 
  \begin{align}
  \label{eq1_var_eps}
  & u_\eps(t) = u_0 + \int_0^t\partial_t u^d_\eps(s)\,\d s 
  + \int_0^tG(s, u_\eps(s))\,\d   W(s)
  &&\forall\,t\in[0,T]\,,\quad  \P\text{-a.s.}\,,\\
  \label{eq2_var_eps}
  & v_\eps(t) + B( u_\eps(t)) = 0
  &&\text{for a.e.~$t\in(0,T)$}\,,\quad  \P\text{-a.s.}\,,\\
  \label{incl_eps}
  & v_\eps(t) \in A_\eps(\partial_t  u^d_\eps(t)) 
  &&\text{for a.e.~$t\in(0,T)$}\,,\quad  \P\text{-a.s.}
  \end{align}
  Furthermore, the following energy equality holds for every $t\in[0,T]$, $ \P$-almost surely:
\begin{align}
\nonumber
  &\Phi( u_\eps(t)) 
  + \int_0^t\left( v_\eps(s), \partial_t u^d_\eps(s)\right)_H\,\d s \\
  &=\Phi(u_0) 
  +\frac12\int_0^t\operatorname{Tr}\left[L(s, u_\eps(s))\right]\,\d s
  +\int_0^t\left(B( u_\eps(s)), G(s, u_\eps(s))\,\d  W(s)\right)_H\,.
  \label{en_eps}
\end{align}
\end{prop}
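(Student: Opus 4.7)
The strategy is to recognize that the viscously regularized problem \eqref{eq_reg} fits into the rate--dependent doubly nonlinear framework of \cite{ScarStef-SDNL2}, apply the existence theorem therein, and then obtain \eqref{en_eps} via It\^o's formula for $\Phi(u_\eps)$.

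As a first step I would verify that the regularized rate operator meets the hypotheses of \cite{ScarStef-SDNL2}. Indeed, $A_\eps=\partial\Psi_\eps$ with $\Psi_\eps(z)=\tfrac{\eps}{2}\|z\|_H^2+\|z\|_H$ is convex and lower semicontinuous, and
\[
\tfrac{\eps}{2}\|z\|_H^2\le \Psi_\eps(z)\le \tfrac{\eps}{2}\|z\|_H^2+\|z\|_H,\qquad \|\xi\|_H\le \eps\|z\|_H+1\ \ \forall\,\xi\in A_\eps(z).
\]
Hence $\Psi_\eps$ is coercive on $H$ with quadratic growth and $A_\eps$ has linear growth, so $A_\eps$ is exactly of the ``sublinear and nondegenerate'' type treated in \cite{ScarStef-SDNL2}. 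Assumptions~\textbf{B} and \textbf{G} on $\Phi$, $B=D_{\mathcal G}\Phi$, and $G$ (in particular, the trace--class bound on $L$) are precisely those required in that reference. An application of its main existence theorem then yields the stochastic basis, the cylindrical Wiener process $W$, and the triple $(u_\eps,u_\eps^d,v_\eps)$ with the claimed regularity and satisfying \eqref{eq1_var_eps}--\eqref{incl_eps}, once one sets $v_\eps:=-B(u_\eps)$.

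To establish the energy identity \eqref{en_eps}, I would apply a generalized It\^o formula to $\Phi$ along the trajectory $u_\eps$. Exploiting that $\Phi$ is twice G\^ateaux--differentiable with $D_{\mathcal G}\Phi=B$, that $D_{\mathcal G}B\in C^0(V;\cL_w(V,V^*))$, and that $G(\cdot,u_\eps)G(\cdot,u_\eps)^*D_{\mathcal G}B(u_\eps)=L(\cdot,u_\eps)\in \cL^1(H,H)$ is suitably controlled, It\^o's formula gives
\begin{align*}
\Phi(u_\eps(t)) = \Phi(u_0) &+ \int_0^t\bigl(B(u_\eps(s)),\partial_tu_\eps^d(s)\bigr)_H\,\d s\\
&+\int_0^t\bigl(B(u_\eps(s)),G(s,u_\eps(s))\,\d W(s)\bigr)_H + \tfrac{1}{2}\int_0^t\operatorname{Tr}\bigl[L(s,u_\eps(s))\bigr]\,\d s.
\end{align*}
Substituting $B(u_\eps)=-v_\eps$ from \eqref{eq2_var_eps} and rearranging yields \eqref{en_eps}.

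The principal technical obstacle lies in the rigorous justification of It\^o's formula, since $u_\eps$ a priori enjoys only the regularity of a variational solution in the Gelfand triple $V\embed H\embed V^*$ and $\Phi$ is defined only on $V$. To bypass this, I would regularize the trajectory through the operators $(R_\lambda)_\lambda\subset\cL(H,V)$ and use the approximation sequence $(P_n)_n\subset\cL(H,Z)$ furnished by Assumption~\textbf{B}: this allows one to apply the smooth It\^o formula to $\Phi(R_\lambda P_n u_\eps)$, and then pass to the limit first as $n\to\infty$ and then as $\lambda\searrow 0$ by invoking the growth bounds on $B$ and on $L$, the uniform convergence $R_\lambda\to I$ in $\cL_s(V,V)$, dominated convergence for the trace term, and the It\^o isometry for the stochastic integral. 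Such a scheme is precisely the one developed in \cite{ScarStef-SDNL2} to validate the energy identity in the rate--dependent setting, and it applies verbatim here.
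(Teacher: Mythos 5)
Your overall strategy matches the paper's: verify that $A_\eps$ is coercive and linearly bounded on $H$, invoke the existence theorem of \cite{ScarStef-SDNL2} for the rate--dependent doubly nonlinear problem, and take the energy identity from the It\^o formula established there. Your computation of coercivity and linear growth of $A_\eps$ is the one the paper records, and your sketch of the It\^o argument via $R_\lambda$ and $P_n$ is more detailed than needed (the energy identity is already part of the output of \cite{ScarStef-SDNL2}), but not wrong.

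There is, however, one concrete gap: you assert that Assumptions \textbf{B} and \textbf{G} ``are precisely those required in that reference.'' They are not, and the discrepancy is exactly the point the paper addresses. Assumption \textbf{B} here permits the weaker monotonicity
\[
\ip{B(z_1)-B(z_2)}{z_1-z_2}_{V^*,V}\ge c_B\norm{z_1-z_2}_V^p - c_B'\norm{z_1-z_2}_H^2
\]
with $c_B'\ge 0$ (forced to vanish only when $p=2$), whereas the existence theory of \cite{ScarStef-SDNL2} requires $B$ to be maximal $p$--strongly monotone on $V$, i.e.\ the case $c_B'=0$. You cannot apply the cited theorem as is when $c_B'>0$. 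The paper repairs this by rewriting the inclusion as
\[
A_\eps(\partial_t u_\eps^d)+\tilde B(u_\eps)\ni c_B' u_\eps,\qquad \tilde B:=B+c_B'\,I_H:V\to V^*,
\]
noting that $\tilde B$ is maximal $p$--strongly monotone on $V$ and that the extra right-hand side $c_B'u_\eps\,\d t$ is an admissible Lipschitz forcing in the framework of \cite{ScarStef-SDNL2}. Without this shift your appeal to the reference is not justified; with it, the rest of your argument goes through.
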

\begin{proof} The existence results of \cite{ScarStef-SDNL2} can be
  applied to the current setting, provided that we check that $A_\eps$
  is coercive on $H$ and linearly bounded. This in fact
  follows from 
  assumption {\bf A} since we have that 
  \begin{alignat*}{2}
  (w,z)_H =  \eps\norm{z}_H^2 +  \norm{z}_H 
  \ \  \text{and} \ \  \norm{w}_H\leq \eps \norm{z}_H +  1 
   \qquad\forall\,z\in H\,, \ \forall\,w\in A_\eps(z)\,.
 \end{alignat*}
 In fact, the results of
 \cite{ScarStef-SDNL2} apply to the case of a maximal $p$--strongly
 monotone $B$ on $V$, which corresponds to the case 
  $c_B'=0$ in assumption {\bf B}. Still, this case
 can be reconciled with our current setting by equivalently rewriting the
 doubly nonlinear equation in \eqref{eq_reg} as
  \[
  A_\eps(\partial_t u_\eps) + \tilde B(u_\eps) \ni c_B'u_\eps\,,
  \]
  where now $\tilde B:=B+c_B'I_H:V\to V^*$ is maximal $p$--strongly monotone on $V$.
\end{proof}

\subsection{Uniform estimates}\label{sec:ue}
First of all, we recall the energy balance \eqref{en_eps}
that reads
\begin{align*}
  &\Phi( u_\eps(t)) 
  + \int_0^t\left( v_\eps(s), \partial_t u^d_\eps(s)\right)_H\,\d s \\
  &=\Phi(u_0) 
  +\frac12\int_0^t\operatorname{Tr}\left[L(s, u_\eps(s))\right]\,\d s
  +\int_0^t\left(B( u_\eps(s)), G(s, u_\eps(s))\,\d  W(s)\right)_H\,.
\end{align*}
Taking into account the definition of $A_\eps$ and the monotonicity of $A$, we have that 
\[
  \left( v_\eps, \partial_t u^d_\eps\right)_H \geq 
  \eps\norm{\partial_t  u_\eps^d}_H^2
  +c_A\norm{\partial_t  u_\eps^d}_H\,.
\]
Moreover, assumption {\bf B} yields (see \cite[\S~4.2]{ScarStef-SDNL2}) that 
\[
  \Phi ( u_\eps) \geq \frac{c_B}{p}\norm{ u_\eps}_V^p - \frac{c_B'}2\norm{u_\eps}_H^2\,,
\]
so that, recalling that $c_B'=0$ if $p=2$, the Young inequality ensures that 
\[
  \Phi( u_\eps) \geq \frac{c_B}{2p}\norm{ u_\eps}_V^p - C\,.
\] 
Furthermore, by the assumption {\bf G} we get that 
\[
\operatorname{Tr}\left[L(\cdot, u_\eps)\right] \leq h_G(\cdot) + C_G\norm{ u_\eps}_V^p\,.
\]
Taking all these remarks into account, we infer that 
\begin{align*}
  &\frac{c_B}{2p}\norm{ u_\eps(t)}_V^p
  +\eps\int_0^t\norm{\partial_t u^d_\eps(s)}_H^2\,\d s
  +c_A\int_0^t\norm{\partial_t u^d_\eps(s)}_H\,\d s\\
  &\leq C  +\Phi (u_0) + \frac12\norm{h_G}_{L^1(0,T)} + 
  \frac{C_G}{2}\int_0^t\norm{ u_\eps(s)}_V^p\,\d s
 +\int_0^t
  \left(B( u_\eps(s)), G(s, u_\eps(s))\,\d W(s)\right)_H\,.
\end{align*}
Let now $\ell\geq2$ be arbitrary but fixed.
Taking supremum in $(0,t)$, $\ell$--power, and expectations yield,
for a positive constant $C$ independent of $\eps$,
\begin{align*}
  &\E\norm{u_\eps}_{L^\infty(0,t;V)}^{p\ell}
  +\eps^{\ell}\E\norm{\partial_t u^d_\eps}_{L^2(0,t; H)}^{2\ell}
  +\E\norm{\partial_t u^d_\eps}_{L^1(0,t; H)}^\ell\\
  &\leq C\left(1 + \int_0^t\E\norm{u_\eps}_{L^\infty(0,s;V)}^{p\ell}\,\d s
  +\E\sup_{r\in[0,t]}\left|\int_0^r
  \left(B(u_\eps(s)), G(s, u_\eps(s))\,\d W(s)\right)_H\right|^\ell\right)\,.
\end{align*}
Now,
by \eqref{eq2_var_eps} and assumption {\bf A} we have, 
considering $\eps\in(0,1)$ with no restriction, 
\[
  \norm{B(u_\eps)}_H=\norm{A_\eps(\partial_t u_\eps^d)}_H\leq
  \eps\norm{\partial_t u^d_\eps}_H + K_A\leq
  \eps^{1/2}\norm{\partial_t u^d_\eps}_H + K_A\,,
\]
so that by comparison in the last estimate we obtain
\begin{align*}
  &\E\norm{u_\eps}_{L^\infty(0,t;V)}^{p\ell}
  +\eps^{\ell}\E\norm{\partial_t u^d_\eps}_{L^2(0,t; H)}^{2\ell}
  +\E\norm{B(u_\eps)}^{2\ell}_{L^2(0,t; H)}
  +\E\norm{\partial_t u^d_\eps}_{L^1(0,t; H)}^\ell\\
  &\leq  C \left(1 + \int_0^t\E\norm{u_\eps}_{L^\infty(0,s;V)}^{p\ell}\,\d s
  +\E\sup_{r\in[0,t]}\left|\int_0^r
  \left(B(u_\eps(s)), G(s, u_\eps(s))\,\d W(s)\right)_H\right|^\ell\right)\,.
\end{align*}
As far as the stochastic integral on the right--hand side is concerned, 
we use the Burkholder--Davis--Gundy inequality combined with 
assumption {\bf G} and the 
weighted Young inequality to get, for every $\sigma>0$,
\begin{align*}
  &\E\sup_{r\in[0,t]}\left|\int_0^r
  \left(B(u_\eps(s)), G(s, u_\eps(s))\,\d W(s)\right)_H\right|^\ell\\
  &\leq  C  \E\left(\int_0^t\norm{B(u_\eps(s))}_H^2
  \norm{G(s,u_\eps(s))}_{\cL^2(U,H)}^2\,\d s\right)^{\ell/2}\\
  &\leq  C  \E\left[\norm{B(u_\eps)}_{L^2(0,t; H)}^\ell
  \left(1 + \norm{u_\eps}_{L^\infty(0,t; V)}^{\ell\nu}\right)\right]\\
  &\leq \sigma\E\norm{B(u_\eps)}^{2\ell}_{L^2(0,t; H)}
  + C_\sigma  \left(1 + \E\norm{u_\eps}_{L^\infty(0,t;V)}^{2\ell\nu}\right)\,.
\end{align*} 
Since $2\nu<p$ by assumption {\bf G}, using again the Young inequality yields 
\begin{align*}
  &\E\sup_{r\in[0,t]}\left|\int_0^r
  \left(B(u_\eps(s)), G(s, u_\eps(s))\,\d W(s)\right)_H\right|^\ell\\
  &\leq\sigma\E\norm{B(u_\eps)}^{2\ell}_{L^2(0,t; H)}
  +\sigma\E\norm{u_\eps}_{L^\infty(0,t;V)}^{p\ell} +  C_\sigma\,,
\end{align*}
so that 
choosing $\sigma$ sufficiently small 
and rearranging the terms we obtain 
\begin{align*}
  &\E\norm{u_\eps}_{L^\infty(0,t;V)}^{p\ell}
  +\eps^{\ell}\E\norm{\partial_t u^d_\eps}_{L^2(0,t; H)}^{2\ell}
  +\E\norm{B(u_\eps)}^{2\ell}_{L^2(0,t; H)}
  +\E\norm{\partial_t u^d_\eps}_{L^1(0,t; H)}^\ell\\
  &\leq  C \left(1 + \int_0^t\E\norm{u_\eps}_{L^\infty(0,s;V)}^{p\ell}\,\d s\right)\,.
\end{align*}
The Gronwall Lemma ensures then that there exists a constant $C_\ell>0$,
independent of $\eps$, such that 
\begin{align}
  \label{est1}
  \norm{u_\eps}_{L^{p\ell}_\cP(\Omega; L^\infty(0,T; V))}
  +\norm{\partial_t  u_\eps^d}_{L^\ell_{ \cP}( \Omega; L^1(0,T; H))}&\leq C_\ell\,,\\
  \label{est2}
  \eps^{1/2}\norm{\partial_t  u_\eps^d}_{L^{2\ell}_{ \cP}( \Omega; 
  L^2(0,T; H))} &\leq C_\ell\,,\\
  \label{est3}
  \norm{B(u_\eps)}_{L^{2\ell}_\cP(\Omega; L^2(0,T; H))} &\leq C_\ell\,.
\end{align}
From \eqref{est2}--\eqref{est3}, the definition of $A_\eps$
and the fact that $\eps\in(0,1)$
it follows also that 
\beq
  \label{est4}
  \norm{ v_\eps}_{L^{2\ell}_{ \cP}( \Omega; L^2(0,T; H))}\leq C_\ell\,.
\eeq
\section{Vanishing-viscosity limit:  Proof of Theorem~\ref{thm1}}
\label{sec:ri}

Moving from the viscous approximation of Section \ref{sec:limit},
we now prove Theorem  \ref{thm1} by passing to the limit as $\eps \to 0$. To this aim, we 
introduce time rescalings in 
  Subsection~\ref{sec:tr} and check the convergence of properly  parametrized 
  viscous 
solutions in Subsection~\ref{ssec:conv}. Eventually, we
\UUU prove \EEE that the limit of  parametrized  viscous solutions is a  parametrized 
martingale solution in Section \ref{sec:pa}.

\subsection{Time rescaling}\label{sec:tr}
Estimates \eqref{est1}--\eqref{est4} fall short of ensuring compactness on the sequence $(\partial_t u_\eps^d)_\eps$.
To this end, we now introduce a time rescaling on the solutions
and exploit the rate--independency of the evolution.

Define the rescaled final times as the random variables
\[
  \hat T_\eps:\Omega\to \erre_+\,,
  \qquad
  \hat T_\eps:=T + \int_0^T
  \norm{\partial_t  u_\eps(s)}_H \,\d s\,.
\]
Furthermore, define the arc length 
\[
  \tau_\eps:\Omega\times[0,T]\to\erre_+\,, \qquad
  \tau_\eps(t):=t + \int_0^t 
  \norm{\partial_t  u_\eps(s)}_H 
  \,\d s\,, \quad t\in[0,T]\,,
\]
and note that 
\[
  \tau_\eps \in L^\ell_\cP(\Omega; H^{1}(0,T)) \quad\forall\,\ell\geq1\,,
  \qquad 0\leq\tau_\eps(t)\leq \hat T_\eps \quad\forall\,t\in[0,T]\,,\quad\P\text{-a.s.}
\]

Now, redefine $\Omega_0\in\cF$ to be such that $\P(\Omega_0)=1$
and $\tau_\eps(\omega)\in C^0([0,T])$ for every $\omega\in\Omega_0$
and for every $\eps$ (one can take indeed a sequence $\eps\searrow0$ with no restriction).
For $\omega\in\Omega_0$, it is clear that
\[
\tau_\eps(\omega):[0,T] \to [0,\hat T_\eps(\omega)]
\]
is continuous, increasing, and surjective, hence also invertible
with well--defined inverse 
\[
\tau_\eps(\omega)^{-1}:[0,\hat T_\eps(\omega)]\to [0,T]\,.
\]
Hence, it makes sense to define the process
\[
  \hat t_\eps:\Omega\times\erre_+\to [0,T]\,,
  \qquad
  \hat t_\eps(\omega, \tau):=
  \begin{cases}
  \tau_\eps(\omega)^{-1}(\tau) \quad&\text{if } \tau\in[0,\hat T_\eps(\omega)]\,,
  \;\omega\in\Omega_0\,,\\
  T=\tau_\eps(\omega)^{-1}(\hat T_\eps(\omega))
  \quad&\text{if }\tau>\hat T_\eps(\omega)\,,\;\omega\in\Omega_0\,,\\
  0 \quad&\text{if } \omega\in\Omega\setminus\Omega_0\,.
  \end{cases}
\]
We collect the main properties of $\hat t_\eps$ in the following
Lemma, whose proof is omitted as it is almost identical to the
one of Lemma~\ref{lem:t_0}. 
\begin{lem}
  \label{lem:t}
  For every $\eps\in(0,1)$, $\hat t_\eps$ is a well--defined  
  progressively measurable continuous non-decreasing process. Moreover,
  $\hat t_\eps(\cdot,\tau):\Omega\to[0,T]$ is a bounded stopping time 
   for every $\tau\geq0$.
\end{lem}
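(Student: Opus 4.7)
The plan is to adapt verbatim the argument of Lemma~\ref{lem:t_0}, substituting $\tau_\eps$ for $\tau_0$, $\hat t_\eps$ for $\hat t$, and $\hat T_\eps$ for $\hat T$. First, I would note that continuity and monotonicity of $\hat t_\eps(\omega,\cdot)$ follow directly from its construction: on $\Omega_0$, the trajectory $\tau_\eps(\omega,\cdot):[0,T]\to[0,\hat T_\eps(\omega)]$ is continuous and strictly increasing, so its inverse is continuous and strictly increasing on $[0,\hat T_\eps(\omega)]$, and the extension by the constant $T$ on $(\hat T_\eps(\omega),\infty)$ preserves both properties. Outside $\Omega_0$, the process vanishes. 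Hence the topological/monotonicity assertions are built into the definition.

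The substantial content is the measurability. Fix $\bar\tau>0$ and $\bar t\in[0,T]$. Since $\Omega\setminus\Omega_0\in\cF$ and $\hat t_\eps\equiv0$ there, it suffices to analyse
\[
\{(\omega,\tau)\in\Omega_0\times[0,\bar\tau]:\hat t_\eps(\omega,\tau)<\bar t\}.
\]
From the inequality $\tau_\eps(s)\geq s$ we get $\hat t_\eps(\tau)\leq\tau$, so the case $\bar t>\bar\tau$ yields the whole product set, which is trivially in $\cF_{\bar\tau}\otimes\cB([0,\bar\tau])$. For $\bar t\in[0,\bar\tau]$, I split along $\{\tau\leq\hat T_\eps\}$ versus $\{\tau>\hat T_\eps\}$, exactly as in Lemma~\ref{lem:t_0}: in the first region the condition $\hat t_\eps(\tau)<\bar t$ is equivalent to $\tau<\tau_\eps(\bar t)$ by monotonicity of $\tau_\eps$; in the second region it reduces to $T<\bar t$, which is empty because $\bar t\in[0,T]$. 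Using $\tau_\eps(\bar t)\leq \hat T_\eps$ (since $\bar t\leq T$), the two pieces collapse to $\{\tau<\tau_\eps(\omega,\bar t)\}$. Because $\tau_\eps(\cdot,\bar t)$ is $\cF_{\bar t}$-measurable and $\bar t\leq\bar\tau$, this set lies in $\cF_{\bar t}\otimes\cB([0,\bar\tau])\subseteq\cF_{\bar\tau}\otimes\cB([0,\bar\tau])$, giving the progressive measurability.

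For the stopping time property, the same case analysis with $\bar\tau$ replaced by a fixed $\tau\geq0$ produces $\{\omega\in\Omega_0:\hat t_\eps(\omega,\tau)<\bar t\}=\{\omega\in\Omega_0:\tau<\tau_\eps(\omega,\bar t)\}\in\cF_{\bar t}$ whenever $\bar t\leq\tau$, while the set is the full $\Omega_0$ for $\bar t>\tau$. Right-continuity of $(\cF_t)_{t\geq0}$ then upgrades this to the stopping-time condition $\{\hat t_\eps(\cdot,\tau)\leq\bar t\}\in\cF_{\bar t}$; boundedness by $T$ is built in.

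The only mild obstacle, compared with Lemma~\ref{lem:t_0}, is keeping the $\eps$-dependence cleanly in view: one has to choose a single event $\Omega_0\in\cF$ of full measure on which $\tau_\eps(\omega,\cdot)\in C^0([0,T])$ \emph{simultaneously} for every $\eps$ in the sequence under consideration, which is possible since one can work along a countable sequence $\eps\searrow0$. Once this is set, the remainder of the proof is purely structural and carries over without change.
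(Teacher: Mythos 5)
Your proposal is correct and takes exactly the approach the paper intends: the authors explicitly omit the proof of Lemma~\ref{lem:t}, stating it is "almost identical" to that of Lemma~\ref{lem:t_0}, and your argument is precisely that adaptation, with the same case split on $\bar t>\bar\tau$ versus $\bar t\leq\bar\tau$, the same collapse to $\{\tau<\tau_\eps(\omega,\bar t)\}$ using $\tau_\eps(\bar t)\leq\hat T_\eps$, and the same use of right-continuity of $(\cF_t)_t$ for the stopping-time property. Your remark about fixing a single full-measure event $\Omega_0$ uniformly over a countable sequence $\eps\searrow0$ matches the paper's own caveat when it redefines $\Omega_0$ in Section~\ref{sec:tr}.
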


At this point, we can define the rescaled processes
\begin{align*}
  \hat u_\eps: \Omega\times\erre_+\to V\,,& \qquad
  \hat u_\eps(\omega,\tau):=
  u_\eps(\omega,\hat t_\eps(\omega,\tau))\,, &&(\omega,\tau)\in\Omega\times\erre_+\,,\\
  \hat u_\eps^d: \Omega\times\erre_+\to H\,,& \qquad
  \hat u_\eps^d(\omega,\tau):=
  u_\eps^d(\omega,\hat t_\eps(\omega,\tau))\,, &&(\omega,\tau)\in\Omega\times\erre_+\,,\\
  \hat G_\eps:\Omega\times\erre_+\times H\to\cL^2(U,H)\,,&
  \qquad
  \hat G_\eps(\omega,\tau,z):=G(\hat t(\omega,\tau), z)\,, 
  &&(\omega,\tau,z)\in\Omega\times\erre_+\times H\,,\\
  \hat M_\eps: \Omega\times\erre_+\to U_1\,,& \qquad
  \hat M_\eps(\omega,\tau):=
  W(\omega,\hat t_\eps(\omega,\tau))\,, &&(\omega,\tau)\in\Omega\times\erre_+\,,\\
  \hat v_\eps:\Omega\times\erre_+\to H\,,&
  \qquad
  \hat v_\eps(\omega,\tau):=v_\eps(\omega,\hat t(\omega,\tau))\,, 
  &&(\omega,\tau)\in\Omega\times\erre_+\,.
\end{align*}
Since we have performed a time-rescaling of the processes, 
in order to preserve  the measurability
the filtration $(\cF_t)_{t\in[0,T]}$ needs to be rescaled as well.
To the end, recalling that $\hat t_\eps(\cdot,\tau)$ is a stopping time for every 
$\tau\geq0$ by Lemma~\ref{lem:t}, the $\sigma$-algebra $\cF_{\hat t_\eps(\cdot,\tau)}$
can be rigorously defined, and 
it makes sense to define the rescaled filtration 
\[
  \hat\cF_{\eps,\tau}:=\cF_{\hat t_\eps(\cdot, \tau)}=
  \left\{F\in\cF:\; F\cap\{\omega\in\Omega:\hat t_\eps(\omega,\tau)\leq t\}
  \in\cF_t\quad\forall\,t\in[0,T]\right\}\,, \qquad
  \hat\cF:=\cF\,.
\]
By reproducing in the viscous setting $\eps>0$ the arguments of Lemma \ref{lem:hat_0} we
obtain the following result, \UUU which corresponds to a stochastic
version of the original argument by Efendiev \& Mielke \cite{EM} and \EEE whose proof is omitted. 
\begin{lem}
 \label{lem:hat}
 For every $\eps\in(0,1)$, the following holds:
 \begin{enumerate}[(i)]
 \item\label{lem:hat1} $(\hat\cF_{\eps,\tau})_{\tau\geq0}$
 is a filtration satisfying the usual conditions, with associated 
 progressive $\sigma$-algebra denoted by $\hat \cP_\eps$.
 \item\label{lem:hat2} 
 $\hat t_\eps$, $\hat u_\eps$, $\hat u_\eps^d$, and $\hat v_\eps$ are progressively measurable
 on $(\Omega,\hat \cF,(\hat\cF_{\eps,\tau})_{\tau\geq0},\P)$, and satisfy 
 \begin{align*}
 \hat t_\eps\in W^{1,\infty}_{loc}(0,+\infty) \qquad&\P\text{-a.s.}\,,\\
 \hat u_\eps\in C^0([0,+\infty); H)\cap L^\infty(0,+\infty; V) \qquad&\P\text{-a.s.}\,,\\
 \hat u_\eps^d\in W^{1,\infty}_{loc}(0,+\infty; H)\,, \qquad&\P\text{-a.s.}\,,\\
 \partial_t \hat u_\eps^d = \partial_t u^d_{\eps}(\hat t_\eps)\hat t_\eps'\,,
 \qquad&\P\text{-a.s.}\,,\\
 \hat t_\eps'(\tau) + \norm{\partial_t\hat u_\eps^d(\tau)}_H=1
 \qquad&\text{for a.e.~}\tau>0\,,\quad\P\text{-a.s.}\,,\\
 \hat v_\eps\in L^1_{loc}(0,+\infty; H)\,, \qquad&\P\text{-a.s.}\,,
 \end{align*}
 \item\label{lem:hat3} $\hat M_\eps$ is a continuous square-integrable
 $U$-cylindrical martingale 
 on $(\Omega,\hat \cF,(\hat\cF_{\eps,\tau})_{\tau\geq0},\P)$, with 
 tensor quadratic variation given by 
 \[
 \qqv{\hat M_\eps}(\tau)=Q_1\hat t_\eps(\tau)\,, \qquad\tau\geq0\,.
 \]
 \item\label{lem:hat4} For every $Y\in L^2_\cP(\Omega; L^2(0,T; \cL^2(U,H)))$,
 the rescaled process $\hat Y_\eps:=Y(\hat t_\eps)$ is 
 stochastically integrable with respect to $\hat M_\eps$ 
 on $(\Omega,\hat \cF,(\hat\cF_{\eps,\tau})_{\tau\geq0},\P)$ and
 \[
 \int_0^{\hat t_\eps(\tau)}Y(s)\,\d W(s)=
 \int_0^\tau \hat Y_\eps(\sigma)\,\d\hat M_\eps(\sigma) \qquad
 \forall\,\tau\geq0\,,\quad\P\text{-a.s.}
 \]
 In particular,
 $\hat G_\eps(\cdot, \hat u_\eps)$ is 
 stochastically integrable with respect to $\hat M_\eps$ 
 and it holds that
 \[
 \int_0^{\hat t_\eps(\tau)}G(s,u_\eps(s))\,\d W(s)=
 \int_0^\tau \hat G_\eps(\sigma, \hat u_\eps(\sigma))\,\d\hat M_\eps(\sigma) \qquad
 \forall\,\tau\geq0\,,\quad\P\text{-a.s.}
 \]
 \end{enumerate} 
\end{lem}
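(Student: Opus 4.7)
The proof runs in parallel with that of Lemma \ref{lem:hat_0}, with $\hat t$ replaced by $\hat t_\eps$ throughout; the essential new input, namely progressive measurability of $\hat t_\eps$ together with the stopping-time property of each $\hat t_\eps(\cdot,\tau)$, is already supplied by Lemma \ref{lem:t}. The plan is to address items \ref{lem:hat1}--\ref{lem:hat4} in order, highlighting where the argument adapts the deterministic time-rescaling of Efendiev \& Mielke \cite{EM}.

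For \ref{lem:hat1}, monotonicity of the filtration follows from $\hat t_\eps(\cdot,\sigma) \leq \hat t_\eps(\cdot,\tau)$ whenever $\sigma \leq \tau$: given $F \in \hat\cF_{\eps,\sigma}$ one writes
\begin{equation*}
F \cap \{\hat t_\eps(\tau) \leq t\} = \bigl(F \cap \{\hat t_\eps(\sigma) \leq t\}\bigr) \cap \{\hat t_\eps(\tau) \leq t\} \in \cF_t
\end{equation*}
using Lemma \ref{lem:t}. Saturation is immediate because $\hat\cF_{\eps,0} \supseteq \cF_0$, and right-continuity follows verbatim from the proof of Lemma \ref{lem:hat_0}: for $F \in \bigcap_{\lambda>0} \hat\cF_{\eps,\tau+\lambda}$ and fixed $t$, the identity $F \cap \{\hat t_\eps(\tau) \leq t\} = F \cap \bigcap_{\delta>0} \{\hat t_\eps(\tau) \leq t+\delta\}$ combined with the right-continuity of $(\cF_t)$ gives $F \cap \{\hat t_\eps(\tau) \leq t\} \in \cF_t$.

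For \ref{lem:hat2}, progressive measurability of $\hat t_\eps, \hat u_\eps, \hat u_\eps^d, \hat v_\eps$ with respect to $(\hat\cF_{\eps,\tau})_\tau$ follows from continuity (for $\hat t_\eps, \hat u_\eps$) or a.e.~measurability of composed trajectories (for $\hat u_\eps^d, \hat v_\eps$) together with the adaptedness built into the definition of $\hat\cF_{\eps,\tau}$. The key pointwise computation is that, for a.e.~$\tau$,
\begin{equation*}
\hat t_\eps'(\tau) = \bigl(1 + \|\partial_t u_\eps^d(\hat t_\eps(\tau))\|_H\bigr)^{-1},
\end{equation*}
which yields both the Lipschitz bound on $\hat t_\eps$ and the identity $\hat t_\eps' + \|\partial_t u_\eps^d(\hat t_\eps)\|_H \hat t_\eps' = 1$. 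Integration by substitution in $\hat u_\eps^d(\tau) = u_0 + \int_0^{\hat t_\eps(\tau)} \partial_t u_\eps^d(s)\,\d s$ then gives $\partial_t \hat u_\eps^d = \partial_t u_\eps^d(\hat t_\eps)\,\hat t_\eps'$ and the claimed norm equality; a similar substitution bounds $\int_0^\tau \|\hat v_\eps\|_H\,\d\sigma$ in terms of $\|v_\eps\|_H \|\partial_t u_\eps^d\|_H$.

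For \ref{lem:hat3} and \ref{lem:hat4}, the central tool is the Optional Stopping Theorem \cite[Thm.~13.5]{metivier}. Applied to the ordered bounded stopping times $\hat t_\eps(\cdot,\sigma) \leq \hat t_\eps(\cdot,\tau)$, it delivers
\begin{equation*}
\E\bigl[\hat M_\eps(\tau) \bigm| \hat\cF_{\eps,\sigma}\bigr] = \E\bigl[W(\hat t_\eps(\tau)) \bigm| \cF_{\hat t_\eps(\sigma)}\bigr] = W(\hat t_\eps(\sigma)) = \hat M_\eps(\sigma),
\end{equation*}
so that $\hat M_\eps$ is a continuous martingale on $(\Omega, \hat\cF, (\hat\cF_{\eps,\tau})_\tau, \P)$; the same argument applied to the $\cL^1(U_1,U_1)$-valued martingale $(W \otimes W)(t) - Q_1 t$ identifies the tensor quadratic variation $\qqv{\hat M_\eps}(\tau) = Q_1 \hat t_\eps(\tau)$. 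For \ref{lem:hat4}, integration by substitution gives
\begin{equation*}
\E\int_0^\tau \|\hat Y_\eps(\sigma)\|_{\cL^2(U,H)}^2\,\d\hat t_\eps(\sigma) = \E\int_0^{\hat t_\eps(\tau)}\|Y(s)\|_{\cL^2(U,H)}^2\,\d s,
\end{equation*}
and the change-of-variable formula for the integrals themselves is obtained by approximating $Y$ in $L^2$ by an elementary process $E_N$ and observing that the composition $\hat E_N := E_N(\hat t_\eps)$ is elementary and adapted with respect to $(\hat\cF_{\eps,\tau})_\tau$, so that It\^o's isometry lets us pass to the limit on both sides simultaneously. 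The main obstacle is this final measurability reconciliation: the partition points $\tau_k$ used for $\hat E_N$ must be chosen so that $\hat t_\eps(\cdot,\tau_k)$ are stopping times, whence $\cF_{\hat t_\eps(\cdot,\tau_k)} = \hat\cF_{\eps,\tau_k}$ makes the approximating elementary coefficients correctly adapted. Once this is in place, the stated application to $\hat G_\eps(\cdot,\hat u_\eps)$ follows from the growth assumption \textbf{G} and the uniform estimate \eqref{est1}.
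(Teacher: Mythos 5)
Your proof reproduces the argument of Lemma~\ref{lem:hat_0}, which is exactly what the paper intends: the paper explicitly omits the proof of Lemma~\ref{lem:hat}, noting it is ``almost identical'' to Lemma~\ref{lem:hat_0} and pointing to Lemma~\ref{lem:t} for the stopping-time facts. Your treatment of~\ref{lem:hat1}--\ref{lem:hat3} matches the paper line by line.

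In~\ref{lem:hat4}, however, you have the direction of the approximation flipped, and this introduces a small but real inaccuracy. You propose to approximate $Y$ in original time by an elementary process $E_N$ with (presumably deterministic) partition points and then assert that the composition $\hat E_N := E_N(\hat t_\eps)$ is elementary and adapted in rescaled time. This is not true as stated: pushing a deterministic partition of $[0,T]$ forward through $\hat t_\eps$ produces a process whose jump points in $\tau$ are \emph{random} (hitting times of $\tau_\eps$), hence not elementary. The paper's argument goes the other way: one chooses $\hat E_N$ elementary in rescaled time with \emph{deterministic} partition points $\tau_k$ and coefficients $\xi_k \in L^2(\Omega,\hat\cF_{\eps,\tau_k})$, then defines $E_N := \sum_k \xi_k\,\mathbbm{1}_{(\hat t_\eps(\tau_k),\hat t_\eps(\tau_{k+1})]}$ in original time. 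Now the partition points of $E_N$ are the stopping times $\hat t_\eps(\tau_k)$, $\xi_k$ is $\cF_{\hat t_\eps(\tau_k)}$-measurable by construction, so $E_N$ is a legitimate predictable simple process and It\^o's isometry applies; combined with integration by substitution this closes the argument. Your remark that ``the partition points $\tau_k$ used for $\hat E_N$ must be chosen so that $\hat t_\eps(\cdot,\tau_k)$ are stopping times'' is redundant (this holds for every $\tau_k$ by Lemma~\ref{lem:t}) and signals that you sensed the issue but did not quite land on the clean formulation. Reversing the direction as described resolves it.
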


From the proof of Lemma~\ref{lem:hat_0} we also obtain that
 \beq\label{eq_sup}
  \sup_{\tau\geq0}\norm{\hat u_\eps(\omega,\tau)}_V=
  \sup_{t\in[0,T]}\norm{u_\eps(t)}_V \qquad\P\text{-a.s.}\,,
  \eeq
and
 \beq
  \label{est0_hat}
  \hat t_\eps'(\tau) + \norm{\partial_t u_\eps^d(\hat t_\eps(\tau))}_H\hat t_\eps'(\tau)=1
  \qquad
  \text{for a.e.~}\tau>0\,,\quad\P\text{-a.s.}
\eeq
Consequently, Lemma~\ref{lem:hat}, 
the identity \eqref{eq_sup}, and the regularity of $u_\eps$
yield 
\[
  \hat u_\eps \in L^p_{\cP}(\Omega; C^0([0,+\infty); H))\cap 
  L^p_w(\Omega;L^\infty(0,+\infty; V))\,,
\]
where
by the estimate \eqref{est1}
we have
\beq
  \label{est1_hat}
  \norm{\hat u_\eps}_{L^{pl}_{\cP}(\Omega; L^\infty(0,+\infty; V))} \leq C_\ell\,,
\eeq
Moreover, 
Lemma~\ref{lem:hat} also ensures that
\beq
  \label{est2_hat}
  \hat t'_\eps(\tau) + \norm{\partial_t \hat u_\eps(\tau)}_H = 1
  \qquad
  \text{for a.e.~}\tau>0\,,\quad\P\text{-a.s.}
\eeq
It follows in particular that the pair 
$(\hat t_\eps, \hat u_\eps):\Omega\times[0,+\infty)\to \erre\times H$
is $1$--Lipschitz--continuous locally in time, uniformly in $\Omega$ and $\eps$: 
more specifically, 
for every $\hat T>0$ there exists $C_{\hat T}>0$, independent of $\eps$, such that
\beq
  \label{est4_hat}
  \norm{(\hat t_\eps, \hat u_\eps^d)}_{
  L^\infty_\cP(\Omega; W^{1,\infty}(0,\hat T; \erre\times H))} \leq C_{\hat T}\,.
\eeq
In addition, by assumption {\bf G} it holds, for all $\sigma\in[0,\tau]$,
\[
  \norm{\hat G_{\eps}(\sigma, \hat u_{\eps}(\sigma))}_{\cL^2(U,H)}
  =\norm{G(\hat t_{\eps}(\sigma), \hat u_{\eps}(\sigma))}_{\cL^2(U,H)}
  \leq C_G\left(1+\norm{\hat u_{\eps}(\sigma)}_V^{\nu}\right)\,,
\]
from which, by estimate \eqref{est1_hat},
\beq
  \label{est3_hat}
  \norm{\hat G_\eps(\cdot, \hat u_\eps)}_{
  L^{p\ell/\nu}_\cP(\Omega; C^0([0,T]; \cL^2(U,H)))}
  \leq C_\ell\,.
\eeq

Let us now reformulate  problem \eqref{eq1_var_eps}--\eqref{incl_eps}
in rescaled time. 
First of all, it follows from \eqref{eq1_var_eps} and the time rescaling in Lemma~\ref{lem:hat}
that 
\beq
  \label{eq1_hat_eps}
  \hat u_\eps(\tau)= u_0 + \int_0^\tau\partial_t \hat u_\eps^d(\sigma)\,\d\sigma
  +\int_0^\tau\hat G_\eps(\sigma, \hat u_\eps(\sigma))\,\d\hat M_\eps(\sigma)
  \qquad\forall\,\tau\geq0\,,\quad\P\text{-a.s.}
\eeq
Secondly, from \eqref{eq2_var_eps} 
we have that
\beq
  \label{eq2_hat_eps}
  \hat v_\eps(\tau) + B(\hat u_\eps(\tau)) = 0 \qquad\text{for a.e.~}\tau>0\,,\quad\P\text{-a.s.}
\eeq
Now, \UUU the differential inclusion \eqref{incl_eps} reads \EEE
\[
  \eps\partial_t u_\eps^d + A(\partial_t u_\eps^d) + B(u_\eps) \ni 0\,,
\]
and from \eqref{est2_hat} we know that 
\[
  \partial_t u_\eps^d(t) = 
  \frac1{1 - \norm{\partial_t \hat u_\eps^d(\tau_\eps(t))}}_H
  \partial_t \hat u_\eps^d(\tau_\eps(t))
  \qquad\text{for a.e.~}t\in(0,T)\,.
\]
Together with the $0$--homogeneity of $A$ this implies that
\begin{equation}\label{questa}
  \frac1{1 - \norm{\partial_t \hat u_\eps^d(\tau)}}_H
  \eps\partial_t \hat u_\eps^d(\tau)+
  A(\partial_t \hat u_\eps^d(\tau))
  +B(\hat u_\eps(\tau))\ni 0 
  \qquad\text{for a.e.}~\tau>0\,.
\end{equation}
In order to write this in a more compact form, 
it is natural to introduce the proper convex lower semicontinuous function 
\[
 \mathcal F:\erre\to[0,+\infty]\,,\qquad
  \mathcal F(r):=\begin{cases}
  - r - \ln(1- r) \quad&\text{if } r\in[0,1)\,,\\
  +\infty \quad&\text{otherwise}\,,
  \end{cases}
\]
whose subdifferential is given by 
\[
  f:=\partial \mathcal F:\erre\to2^\erre\,, \qquad
  f(r)=\begin{cases}
  (-\infty,0] \quad&\text{if } r=0\,,\\
  \frac1{1-r} - 1 \quad&\text{if } r\in(0,1)\,,\\
  \emptyset \quad&\text{otherwise}\,,
  \end{cases}
\]
and set 
\[
  \widehat \Psi_\eps:H\to[0,+\infty]\,,
  \qquad
  \widehat\Psi_\eps(z):= \Psi(z) + \eps \mathcal F(\norm{z}_H)\,, \quad z\in H\,.
\]
It is indeed immediate to check that the subdifferential 
of $\widehat\Psi_\eps$ is given by
\[
  \partial\widehat\Psi_\eps:H\to2^H\,,
  \qquad\partial\widehat\Psi_\eps(z) =
  \begin{cases}
  A(z) + \eps f(\norm{z}_H)\frac{z}{\norm{z}_H} \quad&\text{if } z\neq0\,,\\
  A(z) \quad&\text{if } z=0\,.
  \end{cases}
\]
Taking this into account, we may rewrite the nonlinear inclusion  
\eqref{eq2_var_eps} as 
\beq
  \label{incl_hat_eps}
  \hat v_\eps(\tau)\in\partial\widehat\Psi_\eps(\partial_t \hat u_\eps^d(\tau))
  \qquad\text{for a.e.~}\tau>0\,,\quad\P\text{-a.s.}
\eeq

Testing equation \eqref{questa} by $\partial_t \hat u_\eps^d$ and using
assumption {\bf A} and \eqref{est2_hat}, we get 
\begin{equation}\label{questa2}
  \eps f(\norm{\partial_t \hat u_\eps^d}_H)\norm{\partial_t \hat u_\eps^d}_H
  +c_A\norm{\partial_t \hat u_\eps^d}_H = 
  -\left(B(\hat u_\eps), \partial_t \hat u_\eps^d\right)
  \leq\norm{B(\hat u_\eps)}_H\,.
\end{equation}
For every $\hat T>0$, using the definition of $\tau_\eps$ and equation \eqref{eq2_var_eps},
\begin{align*}
  \int_0^{\hat T}\norm{B(\hat u_\eps(\tau))}_H\,\d\tau&=
  \int_0^{\hat t_\eps(\hat T)}\norm{B(u_\eps(t))}_H\tau_\eps'(t)\,\d t
  \leq\int_0^T\norm{B(u_\eps(t))}_H\norm{\partial_t u_\eps^d(t)}_H\,\d t\\
  &=\int_0^T\norm{v_\eps(t)}_H\norm{\partial_t u_\eps^d(t)}_H\,\d t\,.
\end{align*}
Since $v_\eps-\eps\partial_t u_\eps^d\in A(\partial_t u_\eps^d)$ by equation \eqref{incl_eps},
using assumption {\bf A} we get
\[
  \norm{v_\eps}_H\leq \eps\norm{\partial_tu_\eps^d}_H + K_A\,.
\]
Hence, by integrating inequality \eqref{questa2} over $[0,\hat
T]$ we get
\begin{align*}
  \eps \int_0^{\hat T}
  f(\norm{\partial_t \hat u_\eps^d(\tau)}_H)\norm{\partial_t \hat u_\eps^d(\tau)}_H\,\d\tau&\leq
  \int_0^{\hat T}\norm{B(\hat u_\eps(\tau))}_H\,\d\tau\\
  &\leq\int_0^T\left(\eps\norm{\partial_t u_\eps^d(t)}_H^2 + \norm{\partial_t u_\eps^d(t)}_H\right)\,\d t
  \qquad\forall\,\hat T>0\,.
\end{align*}
Since $\hat T>0$ is arbitrary, estimates \eqref{est1}--\eqref{est2} imply that 
\beq
  \label{est5_hat}
  \eps\norm{f(\norm{\partial_t \hat u_\eps^d}_H)
  \partial_t \hat u_\eps^d}_{L^\ell_\cP(\Omega; L^1(0,+\infty; H))}
  +\norm{B(\hat u_\eps)}_{L^\ell_\cP(\Omega; L^1(0,+\infty; H))}\leq C_\ell\,,
\eeq
which ensure in particular that 
\beq
  \label{est6_hat}
  \eps\norm{f(\norm{\partial_t \hat u_\eps^d}_H)}_{L^\ell_\cP(\Omega; L^1(0,+\infty))}
  +\norm{\hat v_\eps}_{L^\ell_\cP(\Omega; L^1(0,+\infty;H))}
  \leq C_\ell\,,
\eeq
while \eqref{est1_hat} and assumption {\bf B} give 
\beq
  \label{est7_hat}
  \norm{B(\hat u_\eps)}_{L^{q\ell}_\cP(\Omega; L^\infty(0,+\infty; V^*))}\leq C_{\ell}\,.
\eeq

\subsection{Convergences of the rescaled processes}
\label{ssec:conv}
Taking estimate \eqref{est1_hat} into account, 
we deduce that there exists
\begin{align*}
  \hat u \in \bigcap_{\hat T>0}
  L^{p\ell}_w(\Omega; L^\infty(0,\hat T; V))
\end{align*}
such that 
\beq
  \label{conv1_hat}
  \hat u_\eps \wstarto \hat u \qquad\text{in } 
   L^{p\ell}_w(\Omega; L^\infty(0,\hat T; V)) \qquad\forall\,\hat T>0\,.
\eeq
Moreover, noting that $\hat\cP_\eps\subseteq\cP$, 
from \eqref{est4_hat} and the Ascoli--Arzel\`a 
and Aubin--Lions \cite{Simon} Theorems,
we infer that there exist  $\cP$-measurable processes
\[
  (\hat t, \hat u^d):\Omega\times\erre_+\to [0,T]\times H
\]
such that 
\[
  (\hat t, \hat u^d) \in W^{1,\infty}_{loc}(0,+\infty; \erre\times H) \qquad\P\text{-a.s.}
\]
and, for every $\hat T>0$,
\begin{align}
  \label{conv2_hat}
  \hat t_\eps \to \hat t \qquad&\text{in } C^0([0,\hat T]) \quad\P\text{-a.s.}\,,\\
  \label{conv2'_hat}
  \hat t_\eps \to \hat t \qquad&\text{in } L^r_\cP(\Omega;
  C^0([0,\hat T])) \quad\forall\,r\geq1\,,\\
  \label{conv3_hat}
  \hat t_\eps'\wstarto \hat t' \qquad&\text{in } L^\infty_\cP(\Omega\times(0,\hat T))\,,\\
  \label{conv4_hat}
  \hat u_\eps^d \to \hat u^d \qquad&\text{in } C^0([0,\hat T]; E^*) \quad\P\text{-a.s.}\,,\\
  \label{conv4'_hat}
  \hat u_\eps^d \to \hat u^d \qquad&\text{in }
                                     L^r_\cP(\Omega;C^0([0,\hat T];
                                      E^* ))
  \quad\forall\,r\geq1\,,\\
  \label{conv5_hat}
  \hat u_\eps^d \wstarto \hat u^d \qquad&\text{in } 
  L^\infty_\cP(\Omega\times(0,\hat T); H)\,,\\
  \label{conv6_hat}
  \partial_t\hat u_\eps^d \wstarto \partial_t\hat u^d 
  \qquad&\text{in } L^\infty_\cP(\Omega\times(0,\hat T); H)\,.
\end{align}
Furthermore, since $W$ has $\alpha$-H\"older-continuous trajectories 
in $U_1$ for every $\alpha\in(0,1/2)$,
we have for every $\eps_1,\eps_2>0$ and $\tau\in[0,\hat T]$ that
\begin{align*}
  \|(\hat M_{\eps_1}-\hat M_{\eps_2})(\tau)\|_{U_1}
  &=\|W(\hat t_{\eps_1}(\tau)) - W(\hat t_{\eps_2}(\tau))\|_{U_1}
  \leq\|W\|_{C^{0,\alpha}([0,\hat T]; U_1)}|\hat t_{\eps_1}(\tau)-\hat t_{\eps_2}(\tau)|^\alpha
  \\
  &\leq\|W\|_{C^{0,\alpha}([0,\hat T]; U_1)}
  \norm{\hat t_{\eps_1}-\hat t_{\eps_2}}_{C^0([0,\hat T])}^\alpha\,.
\end{align*}
Thanks to the convergences \eqref{conv2_hat}--\eqref{conv3_hat}
and the finiteness  of every moment of $W$ these imply
that  there exists 
a $\cP$-progressively measurable process
\[
  \hat M:\Omega\times\erre_+\to U_1\,,
  \qquad
  \hat M\in \bigcap_{\hat T>0}\bigcap_{r\geq1}
  L^r_\cP(\Omega; C^0([0,\hat T]; U_1))
\]
such that, for every $\hat T>0$,
\beq\label{conv7_hat}
  \hat M_\eps\to \hat M \qquad\text{in } L^r_\cP(\Omega; C^0([0,\hat T]; U_1))
  \quad\forall\,r\geq1\,.
\eeq 
It is  hence  natural to define the limiting rescaled process
\[
  \hat G:\Omega\times\erre_+\times H\to \cL^2(U,H)\,,
  \qquad
  \hat G(\omega,\tau, z):=G(\omega,\hat t(\omega,\tau), z)\,,
  \quad(\omega,\tau,z)\in\Omega\times\erre_+\times H\,,
\]
and analogously 
\begin{alignat*}{2}
  \hat L_\eps:\Omega\times\erre_+\times V\to\cL^1(H,H)\,,
  \qquad&\hat L_\eps(\omega,\tau,z):=L(\hat t_\eps(\omega,t),z)\,,
  \quad&&(\omega,\tau,z)\in\Omega\times\erre_+\times V\,,\\
  \hat L:\Omega\times\erre_+\times V\to\cL^1(H,H)\,,
  \qquad &\hat L(\omega,\tau,z):=L(\hat t(\omega,t),z)\,,
  \quad&&(\omega,\tau,z)\in\Omega\times\erre_+\times V\,.
\end{alignat*}

We prove now some strong compactness properties on
the rescaled processes.
\begin{lem}
  \label{lem:strong_conv}
  The following convergences hold for every $\hat T>0$:
  \begin{alignat}{2}
  \label{conv8_hat}
  \hat u_\eps\to \hat u
  \qquad&\text{in } L^{r}_\cP(\Omega; C^0([0,\hat T]; H))
  \quad&&\forall\,r\in[1,p\ell)\,,\\
  \label{conv9_hat}
  \hat G_\eps(\cdot, \hat u_\eps)\to \hat G(\cdot, \hat u)
  \qquad&\text{in } L^{r}_\cP(\Omega; C^0([0,\hat T]; \cL^2(U,H)))
  \quad&&\forall\,r\in[1,p\ell)\,,\\
  \label{conv11_hat}
  \hat u_\eps \to \hat u \qquad&\text{in } L^r_\cP(\Omega; L^p(0,\hat T; V))
  \quad&&\forall\,r\in\left[1,\textstyle\frac{p^2\ell}{p+1}\right)\,,\\
  \label{conv15_hat}
  \hat L_\eps(\cdot, \hat u_\eps)\to \hat L(\cdot, \hat u)
  \qquad&\text{in } L^1_{\cP}(\Omega; L^1(0,\hat T; \cL^1(H,H)))\,,\\
  \label{conv12_hat}
  B(\hat u_\eps)\to B(\hat u)
  \qquad&\text{in } L^r_\cP(\Omega; L^p(0,\hat T; V^*))
  \quad&&\forall\,r\in\textstyle\left[1,\frac{p^2\ell}{p^2-p+1}\right)\,,\\
  \label{conv13_hat}
  B(\hat u_\eps)\wstarto B(\hat u)
  \qquad&\text{in } L^{q\ell}_w(\Omega; L^\infty(0,\hat T; V^*))
  \end{alignat}
In particular,  owing to \cite[Thm.~2.1]{Strauss} we have that $\hat u \in
C^0_w([0,T],V)$, $\P$-a.s. and
\begin{equation}
  \label{conv13_hat_due}
  \hat u_\eps(\tau) \wto \hat u(\tau) \qquad \text{in } L^p(\Omega,\cF_\tau;V), \quad \forall
  \tau \in [0,\hat T].
\end{equation}
Moreover,
setting   $\hat v:=-B(\hat u)$, it holds that
\begin{alignat}{2}
  \label{conv14_hat}
  \hat v_\eps \to \hat v \qquad&\text{in }
  L^r_\cP(\Omega; L^p(0,\hat T; V^*))
  \quad&&\forall\,r\in\textstyle\left[1,\frac{p^2\ell}{p^2-p+1}\right)\,,\\
  \label{conv14_hat'}
  \hat v_\eps \wstarto \hat v \qquad&\text{in }
  L^{q\ell}_w(\Omega; L^\infty(0,\hat T; V^*))\,.
\end{alignat}
\end{lem}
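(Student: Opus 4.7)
The strategy is to bootstrap the compactness results by working upward through norms, starting with the weakest space $E^*$ and using the equation \eqref{eq1_hat_eps}, interpolation, and monotonicity of $B$.

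\textbf{Strong $H$-convergence \eqref{conv8_hat} and $G$-convergence \eqref{conv9_hat}.} From \eqref{eq1_hat_eps} I would write
\[
\hat u_\eps(\tau) - \hat u_\eps^d(\tau)
= \int_0^\tau \hat G_\eps(\sigma,\hat u_\eps(\sigma))\,\d\hat M_\eps(\sigma),
\]
and show that the stochastic integral converges strongly to $\int_0^\cdot \hat G(\sigma,\hat u(\sigma))\,\d\hat M(\sigma)$ in $L^r_{\hat\cP}(\Omega; C^0([0,\hat T]; H))$. To circumvent the fact that $\hat M_\eps$, $\hat M$ live on different filtrations, I would use the identity in Lemma~\ref{lem:hat}\,(iv) to rewrite both integrals as stopped Itô integrals against the single original Wiener process $W$, then apply the Burkholder--Davis--Gundy inequality together with the $H$-Lipschitz continuity of $G$ and the convergences \eqref{conv2'_hat}, \eqref{conv7_hat}. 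Combining this with the strong convergence \eqref{conv4'_hat} gives $\hat u_\eps \to \hat u$ in $C^0([0,\hat T]; E^*)$ $\P$-a.s.; the interpolation \eqref{eq:appro} together with the uniform $V$-bound \eqref{est1_hat} then upgrades this to convergence in $C^0([0,\hat T]; H)$ a.s., and the higher-moment bound \eqref{est1_hat} allows Vitali to yield \eqref{conv8_hat}. The Lipschitz bound on $G$ immediately gives
\[
\|\hat G_\eps(\sigma,\hat u_\eps) - \hat G(\sigma,\hat u)\|_{\cL^2(U,H)}
\leq C\bigl(|\hat t_\eps - \hat t|^{\alpha_G} + \|\hat u_\eps - \hat u\|_H\bigr),
\]
so \eqref{conv9_hat} follows from \eqref{conv2'_hat} and \eqref{conv8_hat}.

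\textbf{Strong $V$-convergence \eqref{conv11_hat}.} This is the hardest step. I would exploit the strong monotonicity of $B$ granted by assumption \textbf{B},
\[
c_B \|\hat u_\eps - \hat u\|_V^p \leq \langle B(\hat u_\eps) - B(\hat u), \hat u_\eps - \hat u\rangle_{V^*,V} + c_B' \|\hat u_\eps - \hat u\|_H^2.
\]
Integrating on $\Omega\times(0,\hat T)$, the $H$-term vanishes by \eqref{conv8_hat}. The cross term is controlled by passing to the limit in the viscous energy identity \eqref{en_eps} written in rescaled time: using $\hat v_\eps = -B(\hat u_\eps)$, this identity encodes $\E\int_0^{\hat T}\langle B(\hat u_\eps), \partial_t\hat u_\eps^d\rangle$ in terms of $\E\Phi(\hat u_\eps)$, $\E\int\operatorname{Tr}\hat L_\eps(\cdot, \hat u_\eps)\hat t_\eps'$, and a mean-zero stochastic integral. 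The weak and strong convergences collected in \eqref{conv1_hat}--\eqref{conv7_hat}, the strong convergence \eqref{conv9_hat}, and the continuity of $L$ combined with the growth bound in assumption \textbf{G} (yielding \eqref{conv15_hat} by Vitali) allow a lim-sup passage on the right-hand side, while lower semicontinuity of $\Phi$ handles the left-hand side; one concludes by comparing with the analogous limit identity that $\hat u$ is expected to satisfy.

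\textbf{Remaining convergences.} Once \eqref{conv11_hat} is established, \eqref{conv12_hat} is a direct consequence of the local Lipschitz bound on $B : V\to V^*$ from assumption \textbf{B} (polynomial growth of $D_{\mathcal G}B$) and Vitali in view of \eqref{est7_hat}. The weak-* convergence \eqref{conv13_hat} follows from \eqref{est7_hat} and the identification of the limit via \eqref{conv12_hat}; \eqref{conv13_hat_due} comes from combining the uniform $L^\infty(V)$ bound, the strong $C^0([0,\hat T]; H)$ convergence, and \cite[Thm.~2.1]{Strauss}. Finally, setting $\hat v := -B(\hat u)$ and using the pointwise identity \eqref{eq2_hat_eps}, \eqref{conv14_hat} and \eqref{conv14_hat'} follow directly from \eqref{conv12_hat} and \eqref{conv13_hat}, respectively. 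The main obstacle is the $V$-strong step: passing to the limit in the rescaled viscous energy balance requires carefully coordinating the convergence of the trace term $\operatorname{Tr}\hat L_\eps$ weighted by the random factor $\hat t_\eps'$ with that of the stochastic integral against the $\eps$-dependent martingale $\hat M_\eps$.
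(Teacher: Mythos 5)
Your strategy for the strong $H$-convergence \eqref{conv8_hat} is roughly in the right spirit — control the $E^*$-norm first, then bootstrap via the interpolation \eqref{eq:appro} — but the description glosses over the central difficulty: the $H$-Lipschitz bound on $G$ produces $\|\hat u_\eps - \hat u\|_H$ in the Burkholder--Davis--Gundy estimate, which is exactly the quantity you are trying to control. The paper resolves this self-reference by a Cauchy argument on the difference $\hat u_{\eps_1}-\hat u_{\eps_2}$, splitting the interval $[0,\hat T]$ into small subintervals of length $\tau < \eta^{\alpha r}/(2C_{\hat T})$ and iterating (a Gronwall-type mechanism), with the interpolation \eqref{eq:appro} absorbing the $E^*$-versus-$H$ mismatch via the free parameter $\eta$. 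This iteration is not an optional refinement; without it your estimate does not close.

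The more serious problem is the $V$-strong convergence \eqref{conv11_hat}. You propose testing the monotonicity inequality against $\hat u_\eps - \hat u$ and then handling the term $\langle B(\hat u_\eps)-B(\hat u),\,\hat u_\eps - \hat u\rangle$ by a lim-sup passage in the rescaled viscous energy identity \eqref{en_eps}, ``comparing with the analogous limit identity that $\hat u$ is expected to satisfy.'' That limit energy identity is precisely Proposition~\ref{prop:ito}, which is proved \emph{after} Lemma~\ref{lem:strong_conv} and whose proof uses \eqref{eq2_hat}, i.e.~$\hat v + B(\hat u) = 0$, which is itself deduced from the convergences of this lemma. So your route is circular. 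The paper avoids this entirely by working with \emph{two} viscous solutions rather than a viscous solution and the limit: from the strong monotonicity in Assumption~{\bf B} and the viscous identity $B(\hat u_{\eps_i}) = -\hat v_{\eps_i}$ (which holds at each $\eps_i$ by \eqref{eq2_hat_eps}),
\begin{align*}
c_B\norm{\hat u_{\eps_1}-\hat u_{\eps_2}}_V^p - c_B'\norm{\hat u_{\eps_1}-\hat u_{\eps_2}}_H^2
&\leq \left(B(\hat u_{\eps_1})-B(\hat u_{\eps_2}),\,\hat u_{\eps_1}-\hat u_{\eps_2}\right)_H
= \left(\hat v_{\eps_2}-\hat v_{\eps_1},\,\hat u_{\eps_1}-\hat u_{\eps_2}\right)_H \\
&\leq \left(\norm{\hat v_{\eps_1}}_H + \norm{\hat v_{\eps_2}}_H\right)\norm{\hat u_{\eps_1}-\hat u_{\eps_2}}_H\,.
\end{align*}
Combining this with the H\"older inequality, the uniform bound \eqref{est6_hat} on $\hat v_\eps$ in $L^\ell_\cP(\Omega;L^1(0,+\infty;H))$, and the already-established $H$-strong convergence \eqref{conv8_hat} yields a Cauchy sequence in $L^r_\cP(\Omega;L^p(0,\hat T;V))$ directly. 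No energy identity, no limit chain rule, and no knowledge of $B(\hat u)$ are needed. Your remaining steps, \eqref{conv15_hat} by Vitali and \eqref{conv12_hat}--\eqref{conv14_hat'} from polynomial growth of $B$ and the established estimates, do match the paper.
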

\begin{proof}
{\it Proof of \eqref{conv8_hat}--\eqref{conv9_hat}.}
Subtracting \eqref{eq1_hat_eps} at some arbitrary 
$\eps_1,\eps_2\in(0,1)$ we have 
\beq
  \label{conv8_aux}
  \hat u_{\eps_1}-\hat u_{\eps_2}=
  \hat u_{\eps_1}^d - \hat u_{\eps_2}^d + 
  \hat G_{\eps_1}(\cdot, \hat u_{\eps_1})\cdot\hat M_{\eps_1}
  -\hat G_{\eps_2}(\cdot, \hat u_{\eps_2})\cdot\hat M_{\eps_2}\,.
  \eeq
   In order to control the right-hand side we use the 
  Burkholder--Davis--Gundy inequality, which implies that
  for all $\hat T>0$, $\tau\in[0,\hat T]$, and any $2\leq
 r$,
\begin{align*}
  &\norm{\hat G_{\eps_1}(\cdot, \hat u_{\eps_1})\cdot\hat M_{\eps_1}
  -\hat G_{\eps_2}(\cdot, \hat u_{\eps_2})\cdot\hat M_{\eps_2}
  }_{L^r_\cP(\Omega; C^0([0,\tau]; H))}^r\\
  &\leq
   C \norm{( \hat G_{\eps_1}(\cdot, \hat u_{\eps_1})
  -\hat G_{\eps_2}(\cdot, \hat u_{\eps_2}))\cdot\hat M_{\eps_2}
  }_{L^r_\cP(\Omega; C^0([0,\tau]; H))}^\ell\\
  &\qquad+ C \E\norm{\hat G_{\eps_2}(\cdot, \hat u_{\eps_2})
  \cdot(\hat M_{\eps_1}-\hat M_{\eps_2})
  }_{L^r_\cP(\Omega; C^0([0,\tau]; H))}^\ell\\
  &\leq  C \E\left(\int_0^\tau\norm{\hat G_{\eps_1}(\sigma, \hat u_{\eps_1}(\sigma))
  -\hat G_{\eps_2}(\sigma, \hat u_{\eps_2}(\sigma))}_{\cL^2(U,H)}^2
  \hat t'_{\eps_2}(\sigma)\,\d\sigma\right)^{r/2}\\
  &\qquad+ C \E\left(\int_0^\tau\norm{
  \hat G_{\eps_2}(\sigma, \hat u_{\eps_2}(\sigma))}_{\cL^2(U,H)}^2
  \,\d\qv{\hat M_{\eps_1}-\hat M_{\eps_2}}(\sigma)\right)^{r/2}=:I_1+I_2\,.
\end{align*}
We can handle the term $I_1$ thanks to assumption {\bf G} 
and condition \eqref{est2_hat},  for 
we have, for all $\sigma\in[0,\tau]$,
\begin{align*}
  &\norm{\hat G_{\eps_1}(\sigma, \hat u_{\eps_1}(\sigma))-
  \hat G_{\eps_2}(\sigma, \hat u_{\eps_2}(\sigma))}_{\cL^2(U,H)}^2
  \hat t_{\eps_1}'(\sigma)\\
  &\qquad\leq
  \norm{G(\hat t_{\eps_1}(\sigma), \hat u_{\eps_1}(\sigma))-
  G(\hat t_{\eps_2}(\sigma), \hat u_{\eps_2}(\sigma))}_{\cL^2(U,H)}^2\\
  &\qquad\leq 2C_G^2\left(
  \norm{(\hat u_{\eps_1}-\hat u_{\eps_2})(\sigma)}_H^2
  +|(\hat t_{\eps_1}-\hat t_{\eps_2})(\sigma)|^{2\alpha_G}\right)\,,
\end{align*}
so that
\begin{align*}
  &I_1=c_r
  \E\left(\int_0^\tau\norm{\hat G_{\eps_1}(\sigma, \hat u_{\eps_1}(\sigma))
  -\hat G_{\eps_2}(\sigma, \hat u_{\eps_2}(\sigma))}_{\cL^2(U,H)}^2
  \hat t'_{\eps_1}(\sigma)\,\d\sigma\right)^{r/2}\\
  &\leq  C \int_0^\tau\E
  \norm{\hat u_{\eps_1}-\hat u_{\eps_2}}_{C^0([0,\sigma]; H)}^r\,\d\sigma
  + C_{\hat T} \E\norm{\hat t_{\eps_1}-\hat t_{\eps_2}}_{C^0([0,\hat T])}^{r\alpha_G}\,.
\end{align*}
For the  term $I_2$,  using the H\"older inequality  twice  
we get 
\begin{align*}
I_2&=\E\left(\int_0^\tau\norm{
  \hat G_{\eps_2}(\sigma, \hat u_{\eps_2}(\sigma))}_{\cL^2(U,H)}^2
  \,\d\qv{\hat M_{\eps_1}-\hat M_{\eps_2}}(\sigma)\right)^{r/2}\\
  &\leq\E\left(
  \|\hat G_{\eps_2}(\cdot, \hat u_{\eps_2})\|_{C^0([0,\hat T]; \cL^2(U,H))}^{r}
  \qv{\hat M_{\eps_1}-\hat M_{\eps_2}}(\tau)^{r/2}
  \right)\\
  &\leq\norm{\hat G_{\eps_2}(\cdot, \hat u_{\eps_2})}_{
  L^{2r}(\Omega; C^0([0,\hat T];\cL^2(U,H)))}^{r}
  \norm{\qv{\hat M_{\eps_1}-\hat M_{\eps_2}}(\tau)^{1/2}}_{L^{2r}(\Omega)}^{r}\,.
\end{align*}
Now, choosing $\ell\geq2$, by assumption {\bf G} it is not difficult
to check that $p\ell/\nu\geq4$, so that there exists $r\geq 2$
such that $2r\leq p\ell/\nu$. For such $r$, 
the estimate \eqref{est3_hat} and the Burkholder--Davis--Gundy inequality yield then
\begin{align*}
  I_2&\leq 
   C \norm{\qv{\hat M_{\eps_1}-\hat M_{\eps_2}}(\tau)^{1/2}}_{L^{2r}(\Omega)}^{r}
  \leq  C \norm{\hat M_{\eps_1}-\hat M_{\eps_2}}_{
  L^{2r}(\Omega; C^0([0,\hat T]; U_1))}^{r}\,.
\end{align*}
Putting this information together  
we deduce from \eqref{conv8_aux}
that, for all $\ell>2$ and $r\in[2,p\ell/(2\nu)]$,  
\begin{align*}
  &\|\hat u_{\eps_1}-\hat u_{\eps_2}\|^r_{L^r_\cP(\Omega;
    C^0([0,\tau];  E^*))}\\
  &\leq  C  
  \|\hat u^d_{\eps_1}-\hat u^d_{\eps_2}\|^r_{L^r_\cP(\Omega; C^0([0,\hat
    T];  E^* ))}
  + C_{\hat T}  \norm{\hat t_{\eps_1}-\hat t_{\eps_2}}_{
  L_\cP^{r\alpha_G}(\Omega; C^0([0,\hat T]))}^{r\alpha_G}\\
  &\qquad+ C_{\hat T}  \norm{\hat M_{\eps_1}-\hat M_{\eps_2}}_{
  L^{2r}(\Omega; C^0([0,\hat T]; U_1))}^{r}
  + C_{\hat T}  \int_0^\tau
  \norm{\hat u_{\eps_1}-\hat u_{\eps_2}}_{
  L^r_\cP(\Omega; C^0([0,\sigma]; H))}^r\,\d\sigma\,.
\end{align*}
Now,  we use assumption \eqref{eq:appro}, letting 
$\eta>0$ there be arbitrary but fixed. Noting that $p\ell\geq4$ and
picking $r\in[2,p\ell/2]$ only, 
it follows from  estimate \eqref{est1_hat} that
\begin{align*}
  &\|\hat u_{\eps_1}-\hat u_{\eps_2}\|^r_{L^r_\cP(\Omega;
    C^0([0,\tau];  E^*))}\\
  &\leq  C  
  \|\hat u^d_{\eps_1}-\hat u^d_{\eps_2}\|^r_{L^r_\cP(\Omega; C^0([0,\hat
    T];  E^* ))}
  + C_{\hat T}  \norm{\hat t_{\eps_1}-\hat t_{\eps_2}}_{
  L_\cP^{r\alpha_G}(\Omega; C^0([0,\hat T]))}^{r\alpha_G}\\
  &\quad+ C_{\hat T}  \norm{\hat M_{\eps_1}-\hat M_{\eps_2}}_{
  L^{2r}(\Omega; C^0([0,\hat T]; U_1))}^{r}
  + C_{\hat T}  \eta^r + \frac{ C_{\hat T}
    }{\eta^{\alpha r}} \int_0^\tau
  \|\hat u_{\eps_1}-\hat u_{\eps_2}\|^r_{
  L^r_\cP(\Omega; C^0([0,\sigma];  E^* ))} \,\d\sigma\,.
\end{align*}
Choose now $\tau$ so small that $C_{\hat T}
\tau/\eta^{\alpha r}<1/2$. From the latter inequality we hence get
\begin{align*}
  &\frac12\|\hat u_{\eps_1}-\hat u_{\eps_2}\|^r_{L^r_\cP(\Omega;
    C^0([0,\tau];  E^*))}\\
  &\leq  C_{\hat T}  \bigg(\eta^r+
  \norm{\hat u^d_{\eps_1}-\hat u^d_{\eps_2}}_{L^r_\cP(\Omega; C^0([0,\hat T]; E^*))}^r\\
  &+\norm{\hat t_{\eps_1}-\hat t_{\eps_2}}_{
  L_\cP^{r\alpha_G}(\Omega; C^0([0,\hat T]))}^{r\alpha_G}
  +\norm{\hat M_{\eps_1}-\hat M_{\eps_2}}_{
  L^{2r}(\Omega; C^0([0,\hat T]; U_1))}^{r}
  \bigg)\,.
\end{align*}
We now repeat the argument on the intervals $[\tau,2\tau]$,
$[2\tau,3\tau]$ $\dots$ and sum up the corresponding inequalities.
Note that, as $\tau <\eta^{\alpha r}/(2 C_{\hat T})$, we have  $ C
\eta^{-\alpha r}$ of such inequalities. One hence
has
\begin{align*}
  &\frac12\|\hat u_{\eps_1}-\hat u_{\eps_2}\|^r_{L^r_\cP(\Omega;
    C^0([0,\hat T];  E^*))}\\
  &\leq  C_{\hat T}  \eta^{r-\alpha r} + C_{\hat T}
    \eta^{-\alpha r}\bigg(
  \norm{\hat u^d_{\eps_1}-\hat u^d_{\eps_2}}_{L^r_\cP(\Omega; C^0([0,\hat T]; E^*))}^r\\
  &+\norm{\hat t_{\eps_1}-\hat t_{\eps_2}}_{
  L_\cP^{r\alpha_G}(\Omega; C^0([0,\hat T]))}^{r\alpha_G}
  +\norm{\hat M_{\eps_1}-\hat M_{\eps_2}}_{
  L^{2r}(\Omega; C^0([0,\hat T]; U_1))}^{r}
  \bigg)\,.
\end{align*}
Taking now $\eps_1,\, \eps_2\to 0$ and using the convergences \eqref{conv2'_hat},
\eqref{conv4'_hat}, and \eqref{conv7_hat} gives
\begin{align*}
  &\limsup_{\eps_1,\, \eps_2\to 0}\frac12\|\hat u_{\eps_1}-\hat u_{\eps_2}\|^r_{L^r_\cP(\Omega;
    C^0([0,\hat T]; E^*))}\leq  C_{\hat T}  \eta^{
    r-\alpha r}\,
\end{align*}
which in particular ensures that $\hat u_{\eps} \to \hat u$ strongly $L^r_\cP(\Omega; C^0([0,\hat T];  E^*))$  by arbitrariness of $\eta$ and the fact that $\alpha\in(0,1)$. 
By using again assumption \eqref{eq:appro} we deduce that
$$
\|\hat u_{\eps}-\hat u \|_{L^r_\cP(\Omega; C^0([0,\hat T];
  H))}\leq \eta \|\hat u_{\eps}-\hat u\|_{L^r_w(\Omega;
  L^\infty(0,\hat T;  V))}+
  \frac{c_E}{\eta^\alpha }\|\hat u_{\eps}-\hat u\|_{L^r_\cP(\Omega; C^0([0,\hat T];  E^*))}\,.
$$
By taking $\eps \to 0$ we hence have that 
$$\limsup_{\eps\to 0}\|\hat u_{\eps}-\hat u \|_{L^r_\cP(\Omega; C^0([0,\hat T];
  H))}\leq \eta \limsup_{\eps \to 0}\|\hat u_{\eps}-\hat u\|_{L^r_w(\Omega;
  L^\infty(0,\hat T;  V))}.$$
Since   $\hat u_\eps$ is bounded in $L^r_w(\Omega;
L^\infty(0,\hat T;  V))$ and $\eta$ is arbitrary, we have proved that
$\hat u_{\eps} \to \hat u$ strongly $L^r_\cP(\Omega; C^0([0,\hat T];
H))$. In particular, one has that \EEE  
\[
  \hat u_\eps\to \hat u
  \qquad\text{in } L^{p\ell/2}_\cP(\Omega; C^0([0,\hat T]; H))\,.
\]
This in turn implies \eqref{conv8_hat}
by the boundedness \eqref{est1_hat} and the Severini-Egorov theorem.
Also, by assumption {\bf G}, again \eqref{conv2'_hat}, and \eqref{conv8_hat}
we readily get also \eqref{conv9_hat}.\\
{\it Proof of \eqref{conv11_hat}.}
Using assumption {\bf B}, 
equation \eqref{eq2_hat_eps}, and relation \eqref{est2_hat} we obtain 
\begin{align*}
  c_B\norm{\hat u_{\eps_1}-\hat u_{\eps_2}}_V^p-
  c_B'\norm{\hat u_{\eps_1}-\hat u_{\eps_2}}_H^2&\leq
  \left(B(\hat u_{\eps_1})- B(\hat u_{\eps_2}), \hat u_{\eps_1}-\hat u_{\eps_2}\right)_H\\
  &=\left(\hat v_{\eps_2}
  -\hat v_{\eps_1}, 
  \hat u_{\eps_1}-\hat u_{\eps_2}\right)_H\\
  &\leq \left(\norm{\hat v_{\eps_1}}_H+\norm{\hat v_{\eps_2}}_H\right)
  \norm{\hat u_{\eps_1}-\hat u_{\eps_2}}_H\,,
\end{align*}
from which, by the H\"older inequality, for every $\hat T>0$,
\begin{align*}
  &c_B^{1/p}\norm{\hat u_{\eps_1}-\hat u_{\eps_2}}_{L^p(0,\hat T; V)}\\
  &\leq
  \left(\norm{\hat v_{\eps_1}}_{L^1(0,\hat T; H)}^{1/p}
  +\norm{\hat v_{\eps_2}}_{L^1(0,\hat T; H)}^{1/p}\right)
  \norm{\hat u_{\eps_1}-\hat u_{\eps_2}}_{L^\infty(0,\hat T; H)}^{1/p}
  +(c_B')^{1/p}\norm{\hat u_{\eps_1}-\hat u_{\eps_2}}_{L^2(0,\hat T; H)}^{2/p}\,.
\end{align*}
Hence, recalling the estimate \eqref{est6_hat},
again by the H\"older inequality we have, for 
every $r\in[1,\frac{p^2\ell}{p+1})$, that
\begin{align*}
  \norm{\norm{\hat u_{\eps_1}-\hat u_{\eps_2}}_{L^p(0,\hat T; V)}}_{L^r(\Omega)}
  \leq  C  \left(\|\hat u_{\eps_1}-\hat u_{\eps_2}\|^{1/p}_{L^{r'}
  (\Omega;L^\infty(0,\hat T; H))}
  +\norm{\hat u_{\eps_1}-\hat u_{\eps_2}}_{L^{2r/p}(\Omega;L^2(0,\hat T; H))}^{2/p}\right)
\end{align*}
for some $r'\in[1,p\ell)$. Consequently, the 
convergence \eqref{conv8_hat} and the fact that $\frac{2r}p< p\ell$ ensure 
that \eqref{conv11_hat} holds.\\
{\it Proof of \eqref{conv15_hat}.}
Thanks to the strong convergences
\eqref{conv2_hat} and \eqref{conv11_hat} we have,
possibly along a subsequence, 
\[
  (\hat t_\eps,\hat u_\eps)
  \to (\hat t,\hat u) \qquad\text{in } [0,T]\times V
  \quad\text{a.e.~in } \Omega\times(0,\hat T)\,,
\]
from which it follows, thanks to the continuity assumption on $L$ in {\bf G}, that
\[
  \hat L_\eps(\cdot, \hat u_\eps)=
  L(\hat t_\eps, \hat u_\eps)\to 
  L(\hat t, \hat u)=\hat L(\cdot, \hat u)
  \quad\text{in } \cL^1(H,H)
  \quad\text{a.e.~in } \Omega\times(0,\hat T)\,.
\]
Furthermore, by the growth condition on $L$ in assumption
{\bf G},
 we get 
\[
  \norm{\hat L_\eps(\cdot, \hat u_\eps)}_{\cL^1(H,H)}
  \leq C_G(1+\norm{\hat u_\eps}_V^p)\,,
\]
which in turn implies, thanks to the estimate \eqref{est1_hat}, that 
\[
 \norm{\hat L_\eps(\cdot, \hat u_\eps)}_{
 L^\ell_{\hat 
 \cP}(\Omega\times(0,\hat T); \cL^1(H,H))}\leq C_\ell\,.
\]
Hence, the convergence \eqref{conv15_hat} follows from 
the Vitali Convergence Theorem.\\
{\it Proof of \eqref{conv12_hat}--\eqref{conv13_hat}--\eqref{conv14_hat}.}
By assumption {\bf B} we deduce that 
\[
  \norm{B(\hat u_\eps)-B(\hat u)}_{L^p(0,T; V^*)}
  \leq M\left(1+\norm{\hat u_\eps}_{L^\infty(0,T; V)}^{p-2}
  +\norm{\hat u}_{L^\infty(0,T; V)}^{p-2}
  \right)\norm{\hat u_\eps-\hat u}_{L^p(0,T; V)}\,,
\]
from which, taking \eqref{est1_hat} and \eqref{conv11_hat} into account, we deduce 
the convergence  \eqref{conv12_hat}. Clearly, 
thanks to the estimates \eqref{est5_hat} and \eqref{est7_hat},
 this also yields convergence  \eqref{conv13_hat}.
\end{proof}

Let us deal now with the measurability properties of the limiting processes.
Since $\hat t(\cdot,\tau)$ is the almost sure limit of $\hat t_\eps(\cdot,\tau)$
for every $\tau\geq0$, it follows from Lemma~\ref{lem:t} that 
$\hat t(\cdot, \tau)$ is a stopping time with respect to the original 
filtration $(\cF_t)_{t\in[0,T]}$ and that $\hat t$ is $\cP$-measurable.
Consequently,  the filtration
\[
  \cF_{\hat t(\cdot, \tau)}:=
  \left\{F\in\cF:\; F\cap\{\omega\in\Omega:\hat t(\omega,\tau)\leq t\}
  \in\cF_t\quad\forall\,t\in[0,T]\right\}\,
\]
is well defined. 
With this notation, we define the rescaled filtration as
\[
  \hat\cF_{\tau}:=  
  \sigma\{\hat u(\rho), \hat u^d(\rho), \hat M(\rho), \hat t(\rho) 
  :\rho\in[0,\tau]\}\,,\quad\tau\geq0\,,
  \qquad  \hat\cF:=\cF\,.
\]
The following result is the natural follow-up of Lemma~\ref{lem:hat} at the limit.
\begin{lem}
  \label{lem:hat_lim}
  The following  holds.
  \begin{enumerate}[(i)]
 \item\label{lem:hat1_lim} $(\hat\cF_{\tau})_{\tau\geq0}$
 is a filtration satisfying the usual conditions, with associated 
 progressive $\sigma$-algebra denoted by $\hat \cP$.
 \item\label{lem:hat2_lim} 
 $\hat t$, $\hat u$, and $\hat u^d$ are $\hat \cP$-progressively measurable
 on $(\Omega,\hat \cF,(\hat\cF_{\tau})_{\tau\geq0},\P)$, and satisfy 
 \begin{align*}
 \hat t'(\tau) + \norm{\partial_t\hat u^d(\tau)}_H\leq1
 \qquad&\text{for a.e.~}\tau>0\,,\quad\P\text{-a.s.}\,.
 \end{align*}
 \item\label{lem:hat3_lim} 
 $\hat M$ is a continuous square-integrable $U$-cylindrical martingale 
 on $(\Omega,\hat \cF,(\hat\cF_{\tau})_{\tau\geq0},\P)$, with 
 tensor quadratic variation given by 
 \[
 \qqv{\hat M}(\tau)=Q_1\hat t(\tau)\,, \qquad\tau\geq0\,.
 \]
 \item\label{lem:hat4_lim} $\hat G(\cdot, \hat u)$ is 
 stochastically integrable with respect to $\hat M$ and it holds for all $\hat T>0$ that 
 \beq
 \label{conv10_hat}
 \hat G_\eps(\cdot, \hat u_\eps)\cdot\hat M_\eps
 \to \hat G(\cdot, \hat u)\cdot\hat M
 \qquad\text{in } L^{r}_{\hat\cP}(\Omega; C^0([0,\hat T]; H))
 \quad\forall\,r\in[1,p\ell)\,.
 \eeq
 \end{enumerate} 
\end{lem}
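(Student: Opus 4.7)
\textbf{Proof plan for Lemma~\ref{lem:hat_lim}.}

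\emph{Part (i).} The inclusions $\hat\cF_\sigma \subseteq \hat\cF_\tau$ for $\sigma \leq \tau$ are immediate from the definition of $\hat\cF_\tau$ as the natural filtration generated by $(\hat u,\hat u^d,\hat M,\hat t)$. Saturation and right-continuity are enforced by the usual augmentation, i.e.~replacing each $\hat\cF_\tau$ with $\bigcap_{\delta>0}(\hat\cF_{\tau+\delta}\vee \cN)$ where $\cN$ is the collection of $\P$-null sets; this standard procedure does not affect the adaptedness of $\hat t$, $\hat u$, $\hat u^d$, $\hat M$, which is built into the definition.

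\emph{Part (ii).} The processes $\hat t,\hat u,\hat u^d$ are $(\hat\cF_\tau)$-adapted by construction. Almost-sure continuity of their trajectories (in $\erre$, $H$, $H$ respectively) is inherited from the approximating processes via \eqref{conv2_hat}, \eqref{conv8_hat}, and the uniform Lipschitz bound \eqref{est4_hat} which combines with \eqref{conv4_hat} to give $\hat u^d$ Lipschitz-in-$H$. Adaptedness plus continuity yields $\hat\cP$-progressive measurability. The differential inequality is obtained by testing the viscous equality \eqref{est2_hat} against an arbitrary nonnegative $\varphi\in L^1_{\hat\cP}(\Omega\times(0,\hat T))$:
\[
\E\int_0^{\hat T}\hat t'_\eps\,\varphi\,\d\tau + \E\int_0^{\hat T}\norm{\partial_t\hat u_\eps^d}_H\varphi\,\d\tau = \E\int_0^{\hat T}\varphi\,\d\tau.
\]
The first term passes to the limit by \eqref{conv3_hat}; the second yields a lower bound via weak-$*$ lower semicontinuity of the convex functional $w\mapsto \E\int_0^{\hat T}\norm{w}_H\varphi$ combined with \eqref{conv6_hat}. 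Arbitrariness of $\varphi$ then gives $\hat t' + \norm{\partial_t\hat u^d}_H\leq 1$ almost everywhere.

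\emph{Part (iii).} Continuity and square-integrability of $\hat M$ on $U_1$ follow directly from \eqref{conv7_hat} and uniform moment bounds on $\hat M_\eps$. For the martingale property with respect to $(\hat\cF_\tau)$, fix $0\leq\sigma\leq\tau$ and a bounded continuous cylindrical functional $\Phi$ of $(\hat u,\hat u^d,\hat M,\hat t)|_{[0,\sigma]}$. Lemma~\ref{lem:hat}(iii) gives
\[
\E\bigl[\Phi(\hat u_\eps,\hat u_\eps^d,\hat M_\eps,\hat t_\eps|_{[0,\sigma]})\bigl(\hat M_\eps(\tau)-\hat M_\eps(\sigma)\bigr)\bigr]=0,
\]
since the argument of $\Phi$ is $\hat\cF_{\eps,\sigma}$-measurable by adaptedness and continuity. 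Using the strong convergences of Lemma~\ref{lem:strong_conv} in $L^r_\cP(\Omega;C^0([0,\hat T];\cdot))$, Vitali's theorem, and uniform $L^r$-bounds on $\hat M_\eps$, we pass to the limit and obtain the analogous identity for $(\hat u,\hat u^d,\hat M,\hat t)$, proving the martingale property. The tensor quadratic variation $\qqv{\hat M}=Q_1\hat t$ is identified by applying the same limiting procedure to the $\cL^1(U_1,U_1)$-valued martingale $\hat M_\eps\otimes\hat M_\eps - Q_1\hat t_\eps$: its limit $\hat M\otimes\hat M - Q_1\hat t$ is therefore a martingale, forcing the claimed identification.

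\emph{Part (iv).} Stochastic integrability of $\hat G(\cdot,\hat u)$ with respect to $\hat M$ follows from assumption \textbf{G}, the bound $\hat u\in L^{p\ell}_w(\Omega;L^\infty(0,\hat T;V))$ from \eqref{conv1_hat}, and $\hat t'\in L^\infty$ from part (ii), which combine to yield $\E\int_0^{\hat T}\norm{\hat G(\sigma,\hat u)}^2_{\cL^2(U,H)}\,\d\hat t(\sigma)<\infty$. For the convergence \eqref{conv10_hat} the representation \eqref{eq1_hat_eps} gives
\[
X_\eps(\tau) := \int_0^\tau \hat G_\eps(\sigma,\hat u_\eps)\,\d\hat M_\eps(\sigma) = \hat u_\eps(\tau)-\hat u_\eps^d(\tau),
\]
so from \eqref{conv8_hat} and \eqref{conv4'_hat} we already get convergence of $X_\eps$ in $L^r_\cP(\Omega;C^0([0,\hat T];E^*))$ to a limit $X_\infty = \hat u-\hat u^d$. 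The \emph{main obstacle} lies in identifying $X_\infty$ with $\int_0^\cdot\hat G(\cdot,\hat u)\,\d\hat M$ and upgrading the convergence to $C^0(H)$: the identification is carried out by matching cross-variations, $\qv{X_\eps,\hat M_\eps}(\tau)=\int_0^\tau \hat G_\eps(\sigma,\hat u_\eps)Q_1\,\d\hat t_\eps$, passing to the limit using \eqref{conv9_hat}, \eqref{conv15_hat}, and part (iii) to read off the quadratic structure of $X_\infty$ relative to $\hat M$. The upgrade to $C^0(H)$-convergence is then closed by a Burkholder--Davis--Gundy estimate on $X_\eps - X_{\eps'}$ (Cauchy argument against the martingales $\hat M_\eps,\hat M_{\eps'}$, controlled via the $L^r$-strong convergence of $\hat G_\eps(\cdot,\hat u_\eps)$ in $C^0([0,\hat T];\cL^2(U,H))$ and the uniform control $\hat t_\eps\to\hat t$), combined with the uniform moment bounds that ensure $L^r$-strong upgrade from convergence in probability.
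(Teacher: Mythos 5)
Your parts (i)--(iii) follow the paper's own argument essentially line by line: (i) is justified via the stopping-time/continuity structure (the paper invokes the argument in Lemma \ref{lem:hat} rather than explicit augmentation, but the content is the same), (ii) is the weak-$*$ lower semicontinuity of the $H$-norm applied to \eqref{est2_hat}, and (iii) is the martingale-problem passage to the limit for $\hat M_\eps$ and $\hat M_\eps\otimes\hat M_\eps - Q_1\hat t_\eps$, using the a.s.\ uniform convergences and dominated/Vitali convergence. The only place where you diverge is part (iv). The paper argues directly: it estimates
\[
\bigl\|\hat G_\eps(\cdot,\hat u_\eps)\cdot\hat M_\eps - \hat G(\cdot,\hat u)\cdot\hat M\bigr\|_{L^r_\cP(\Omega;C^0([0,\hat T];H))}
\]
by splitting the difference as $(\hat G_\eps(\cdot,\hat u_\eps)-\hat G(\cdot,\hat u))\cdot\hat M_\eps + \hat G(\cdot,\hat u)\cdot(\hat M_\eps - \hat M)$, applying Burkholder--Davis--Gundy and H\"older, and concluding from \eqref{conv9_hat} and \eqref{conv7_hat} --- exactly the same scheme already used to prove \eqref{conv8_hat}. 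Your route is more roundabout: you first extract $C^0(E^*)$-convergence of $X_\eps:=\hat u_\eps - \hat u_\eps^d$ from \eqref{conv8_hat} and \eqref{conv4'_hat}, then identify the limit $X_\infty$ as the stochastic integral via cross-variations, and then run a Cauchy BDG argument in $C^0(H)$. This can be made to work, but it buys you nothing over the direct estimate: the Cauchy argument requires essentially the same BDG bound, and the cross-variation identification step is technically delicate (passing to the limit in $\qv{X_\eps,\hat M_\eps}$ using only $C^0(E^*)$-convergence requires care, and \eqref{conv15_hat} --- which you cite --- concerns $L$, not $G Q_1\hat t'$; you actually want \eqref{conv9_hat} together with \eqref{conv3_hat}). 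In short: the direct BDG comparison replaces all three of your intermediate steps, and is how the paper proceeds.
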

\begin{proof}
 Ad~\ref{lem:hat1_lim}.  The fact that 
$(\hat\cF_{\tau})_{\tau}$ is a complete right-continuous 
filtration follows from the fact that $\hat t$ is 
a continuous increasing adapted process,
using the same arguments of the proof of Lemma~\ref{lem:hat}. \\
Ad~\ref{lem:hat2_lim}.  This follows directly 
from the definition of the rescaled filtration 
$(\hat\cF_\tau)_{\tau\geq0}$ itself and by weak 
lower semicontinuity in condition \eqref{est2_hat}. \\
Ad~\ref{lem:hat3_lim}. Clearly, $\hat M$ is adapted by 
definition of $(\hat\cF_\tau)_{\tau\geq0}$, and we have also already 
seen that it has continuous trajectories in $U_1$.
To show that it is a martingale with values in $U_1$, we note that 
by Lemma~\ref{lem:hat}, for every $\sigma,\tau\geq0$ with $\sigma\leq\tau$
it holds
\[
  \E[(\hat M_\eps(\tau)-\hat M_\eps(\sigma)) \psi((\hat u_\eps,
  \hat u^d_\eps,\hat M_\eps,\hat t_\eps)_{|[0,\sigma]}) ]
  \qquad
  \forall\,\psi\in C^0_b(C^0([0,\sigma];  H \times H \times
  U_1\times \Rz  ))\,.
\]
Letting $\eps\to0^+$ by \eqref{conv7_hat} and the Dominated Convergence Theorem, 
we deduce that 
\[
  \E[(\hat M(\tau)-\hat M(\sigma))  \psi((\hat u_\eps,
  \hat u^d_\eps,\hat M_\eps,\hat t_\eps)_{|[0,\sigma]})] =0 \qquad
  \forall\,\psi\in C^0_b(C^0([0,\sigma];  H \times H \times
  U_1\times \Rz  ))\,,
\]
from which it follows that $\hat M$ is a martingale
with respect to its natural filtration, hence also 
with respect to $(\hat\cF_\tau)_{\tau\geq0}$.
Moreover, by a similar argument, from Lemma~\ref{lem:hat}
we know that for any $\sigma,\tau\geq0$ with $\sigma\leq\tau$,
for any $\varphi_1,\varphi_2\in U_1$
it holds that
\begin{align*}
  &\E\Big[\Big((\hat M_\eps(\tau), \varphi_1)_{U_1}(\hat M_\eps(\tau), \varphi_2)_{U_1}
  -(\hat M_\eps(\sigma), \varphi_1)_{U_1}(\hat M_\eps(\sigma), \varphi_2)_{U_1}\\
  &\qquad
  -(\hat t_\eps(\tau)-\hat t_\eps(\sigma))(Q_1\varphi_1, \varphi_2)_{U_1}
  \Big)  \psi((\hat u_\eps,
    \hat u^d_\eps,\hat M_\eps,\hat t_\eps)_{|[0,\sigma]}) \Big]=0 \\
  &
  \qquad \qquad\forall\,\psi\in C^0_b(C^0([0,\sigma];  H \times H \times
  U_1\times \Rz  ))\,.
\end{align*}
Again by \eqref{conv2_hat}, \eqref{conv7_hat}
and the Dominated Convergence Theorem this yields
that
\begin{align*}
  &\E\Big[\Big((\hat M(\tau), \varphi_1)_{U_1}(\hat M(\tau), \varphi_2)_{U_1}
  -(\hat M(\sigma), \varphi_1)_{U_1}(\hat M(\sigma), \varphi_2)_{U_1}\\
  &\qquad
  -(\hat t(\tau)-\hat t(\sigma))(Q_1\varphi_1, \varphi_2)_{U_1}
  \Big)  \psi((\hat u_\eps,
  \hat u^d_\eps,\hat M_\eps,\hat t_\eps)_{|[0,\sigma]}) \Big]=0 \\
  &
  \qquad \qquad\forall\,\psi\in C^0_b(C^0([0,\sigma];  H \times H \times
  U_1\times \Rz  ))\,,
\end{align*}
from which it follows that the tensor quadratic variation of $\hat M$ is 
exactly $Q_1\hat t$.\\
Ad~\ref{lem:hat4_lim}. Clearly, $\hat G(\cdot, \hat u)$ is $\hat\cP$-measurable 
since so are $\hat u$  and  $\hat t$. 
Moreover, one has by the regularity of $\hat G(\cdot, \hat u)$ that
\begin{align*}
  \E\int_0^\tau\|\hat G(\sigma, \hat u(\sigma))\|_{\cL^2(U,H)}^2\,\d\hat t(\sigma)&=
  \E\int_0^\tau\|\hat G(\sigma, \hat u(\sigma))\|_{\cL^2(U,H)}^2
  \hat t'(\sigma)\,\d\sigma\\
  &\leq
  \E\int_0^{\tau}\|\hat G(\sigma, \hat u(\sigma))\|_{\cL^2(U,H)}^2\,\d\sigma
  <+\infty\,,
\end{align*}
so that $\hat G(\cdot, \hat u)$ is stochastically integrable with respect to $\hat M$. 
Eventually, following exactly the same computations 
based on the Burkholder--Davis--Gundy inequality 
and assumption {\bf G}
used after \eqref{conv8_aux} in order to prove \eqref{conv8_hat},
it is not difficult to see that, for all $r\in[1,p\ell)$,
\begin{align*}
  &\norm{\hat G_{\eps}(\cdot, \hat u_{\eps})\cdot\hat M_{\eps}
  -\hat G(\cdot, \hat u)\cdot\hat M
  }_{L^r_\cP(\Omega; C^0([0,\tau]; H))}^r\\
  &\leq
   C  \E\left(\int_0^\tau\norm{\hat G_{\eps}(\sigma, \hat u_{\eps}(\sigma))
  -\hat G(\sigma, \hat u(\sigma))}_{\cL^2(U,H)}^2
  \hat t'_{\eps}(\sigma)\,\d\sigma\right)^{r/2}\\
  &\qquad+ C \E\left(\int_0^\tau\norm{
  \hat G(\sigma, \hat u(\sigma))}_{\cL^2(U,H)}^2
  \,\d\qv{\hat M_{\eps}-\hat M}(\sigma)\right)^{r/2}\\
  &\leq  C \norm{\hat G_\eps(\cdot, \hat u_\eps)-
  \hat G(\cdot, \hat u)}_{L^r_\cP(\Omega; L^2(0,\hat T; \cL^2(U,H)))}\\
  &\qquad + C_{\hat T}  \E\bigg[
  \norm{\hat G(\cdot, \hat u)}_{C^0([0,\hat T]; \cL^2(U,H))}^r
  \qv{\hat M_\eps-\hat M}(\tau)^{r/2}\bigg]\,,
  \end{align*}
  where, by the H\"older and Burkholder--Davis--Gundy inequalities,
  \begin{align*}
  &\E\bigg[
  \norm{\hat G(\cdot, \hat u)}_{C^0([0,\hat T]; \cL^2(U,H))}^r
  \qv{\hat M_\eps-\hat M}(\tau)^{r/2}\bigg]\\
  &\qquad\leq 
  \norm{\hat G(\cdot, \hat u)}^r_{L^{p\ell}_\cP(\Omega; C^0([0,\hat T]; \cL^2(U,H)))}
  \norm{\hat M_\eps-\hat M}^r_{L^{\frac{rp\ell}{p\ell-r}}_{\cP}(\Omega; 
  C^0([0,\hat T]; U_1))}\,.
  \end{align*}
  From the regularity of $\hat G(\cdot, \hat u)$ we infer that
  \begin{align*}
    &\norm{\hat G_{\eps}(\cdot, \hat u_{\eps})\cdot\hat M_{\eps}
  -\hat G(\cdot, \hat u)\cdot\hat M
  }_{L^r_\cP(\Omega; C^0([0,\tau]; H))}\\
  &\leq  C_{\hat T} \left[
  \norm{\hat G_\eps(\cdot, \hat u_\eps)-
  \hat G(\cdot, \hat u)}_{L^r_\cP(\Omega; L^2(0,\hat T; \cL^2(U,H)))}+
  \norm{\hat M_\eps-\hat M}_{L^{\frac{rp\ell}{p\ell-r}}_{\cP}(\Omega; 
  C^0([0,\hat T]; U_1))}
  \right]\,,
  \end{align*}
  and thanks to the convergences \eqref{conv9_hat} and \eqref{conv7_hat} we conclude.
\end{proof}

We conclude this subsection on the convergence of  the  rescaled problems 
by highlighting the asymptotic behaviour of the functionals $\Psi_\eps$
in the sense of Mosco convergence.  We record here the following
lemma, which we present by omitting the elementary proof  \cite{Attouch}.
\begin{lem}
  \label{lem:mosco}
  It holds that $\widehat\Psi_\eps\to\Psi_{\norm{\cdot}}$ in the sense of Mosco, i.e.
  \begin{align*}
    &\forall\,x\in H\,,\quad\Psi_{\norm{\cdot}}(x)\leq\liminf_{\eps\to0^+}
    \widehat\Psi_\eps(x_\eps)
    \quad\forall\,(x_\eps)_\eps\subset H \text{ with } x_\eps\wto x \text{ in } H\,,\\
    &\forall\,x\in H\,, \quad\exists\,(x_\eps)_\eps\subset H:\quad
    x_\eps\to x\;\text{in } H \quad\text{ and }\quad \lim_{\eps\to0^+}
    \widehat\Psi_\eps(x_\eps)=\Psi_{\norm{\cdot}}(x)\,.
  \end{align*}
  This implies that 
  $\partial\widehat\Psi_\eps \to \partial\Psi_{\norm{\cdot}}$
  in the graph sense, i.e.
  \[
  \forall\,x,y\in H\,, \quad\exists\,(x_\eps)_\eps,(y_\eps)_\eps\subset H:
  \quad x_\eps\to x \text{ in } H\,,
  \quad y_\eps\to y \text{ in } H\,,
  \quad y_\eps\in\partial\widehat\Psi_\eps(x_\eps)\quad\forall\,\eps\,.
  \]
\end{lem}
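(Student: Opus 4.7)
The plan is to verify the two Mosco convergence conditions for $\widehat\Psi_\eps = \|\cdot\|_H + \eps\mathcal{F}(\|\cdot\|_H)$, exploiting that $\mathcal F \geq 0$ (easy to check since $-\ln(1-r)\geq r$) and that $\mathcal F$ is $+\infty$ on $[1,+\infty)$. The graph convergence of the subdifferentials then follows from a standard theorem of Attouch, as recorded in \cite{Attouch}.

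For the liminf inequality, given $x_\eps \rightharpoonup x$ in $H$, I would argue by cases on $\|x\|_H$. If $\|x\|_H\leq 1$, then $\Psi_{\norm{\cdot}}(x)=\|x\|_H$, and since $\widehat\Psi_\eps(x_\eps)\geq \|x_\eps\|_H$ (because $\mathcal F\geq 0$), weak lower semicontinuity of the Hilbert norm gives the bound $\|x\|_H\leq\liminf \|x_\eps\|_H\leq\liminf \widehat\Psi_\eps(x_\eps)$. If $\|x\|_H>1$, then by weak lower semicontinuity eventually $\|x_\eps\|_H>1$, so $\mathcal F(\|x_\eps\|_H)=+\infty$ and hence $\widehat\Psi_\eps(x_\eps)=+\infty$ eventually, matching $\Psi_{\norm{\cdot}}(x)=+\infty$.

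For the recovery sequence, the only nontrivial case is $\|x\|_H=1$, where the constant choice $x_\eps\equiv x$ fails because $\mathcal F(1)=+\infty$. I would instead set $x_\eps:=(1-\eps)x$, so $x_\eps\to x$ strongly in $H$ and
\[
\widehat\Psi_\eps(x_\eps)=(1-\eps)+\eps\bigl[-(1-\eps)-\ln\eps\bigr]\xrightarrow{\eps\to 0^+}1=\Psi_{\norm{\cdot}}(x),
\]
using $\eps\ln\eps\to 0$. The cases $\|x\|_H<1$ and $\|x\|_H>1$ are immediate: the former from $x_\eps\equiv x$ and continuity of $\mathcal F$ at $\|x\|_H$, the latter from $\widehat\Psi_\eps(x)=\Psi_{\norm{\cdot}}(x)=+\infty$ for all $\eps$.

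The main (minor) obstacle is the threshold $\|x\|_H=1$, where one must slightly retract $x$ inside the effective domain of $\mathcal F$ at a rate compatible with $\eps$. The graph convergence of $\partial\widehat\Psi_\eps$ to $\partial\Psi_{\norm{\cdot}}$ is then a direct application of the Attouch theorem for Mosco convergence of convex functionals on Hilbert spaces \cite{Attouch}, which requires no further argument.
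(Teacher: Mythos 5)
The paper omits the proof of this lemma, describing it as elementary and citing \cite{Attouch}; there is therefore no paper proof to compare against. Your argument is a correct, self-contained verification of both Mosco conditions, and the appeal to Attouch's theorem for the passage from Mosco convergence to graph convergence of the subdifferentials is exactly the standard route the authors have in mind. In particular, the key nontrivial step — the recovery sequence $x_\eps := (1-\eps)x$ at the threshold $\|x\|_H=1$, with the computation $\widehat\Psi_\eps(x_\eps) = (1-\eps)+\eps[\eps-1-\ln\eps] \to 1$ via $\eps\ln\eps\to 0$ — is right, as is the liminf argument split on $\|x\|_H\leq 1$ versus $\|x\|_H>1$, where in the latter case weak lower semicontinuity of the norm forces $\|x_\eps\|_H>1$, hence $\widehat\Psi_\eps(x_\eps)=+\infty$, for all $\eps$ small.
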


\subsection{Passage to the limit in the rescaled problem}\label{sec:pa}
The convergences  \eqref{conv2_hat}--\eqref{conv7_hat}
and the Lemmas~\ref{lem:strong_conv}--\ref{lem:hat_lim} ensure, \UUU by \EEE
letting $\eps\searrow0$ in \eqref{eq1_hat_eps}--\eqref{eq2_hat_eps}, that
\beq
  \label{eq1_hat}
  \hat u(\tau) = \hat u^d(\tau)
  +\int_0^{\tau}\hat G(\sigma,\hat u(\sigma))\,\d\hat M(\sigma)
  \qquad\forall\,\tau\geq0\,, \quad \P\text{-a.s.}
\eeq
and
\beq
  \label{eq2_hat}
  \hat v(\tau) + B(\hat u(\tau)) = 0 \qquad\text{for a.e.~}\tau>0\,, \quad \P\text{-a.s.}
\eeq
In order to conclude, we need to identify the limit $\hat v$. To this end, we use a
lower semicontinuity argument in the energy identity and exploit the Mosco convergence
 of  Lemma~\ref{lem:mosco}.
 Note first that 
from the energy identity \eqref{en_eps},
the rescaling process $\hat t_\eps$
and the change-of-variable formula in Lemma~\ref{lem:hat}
yield the rescaled energy identity, for every $\tau>0$,
\begin{align}
\label{en_id_eps}  
  \E\Phi( \hat u_\eps(\tau)) 
  +\E\int_0^\tau\left( \hat v_\eps(\sigma), 
  \partial_t \hat u^d_\eps(\sigma)\right)_H\,\d \sigma 
  &=\Phi(u_0) 
  +\frac12\E\int_0^\tau
  \operatorname{Tr}\left[\hat L_\eps(\sigma, \hat u_\eps(\sigma))\right]
  \hat t_\eps'(\sigma)\,\d \sigma\,.
\end{align}

The identification of the nonlinearity $\hat v$
calls for the corresponding exact energy identity for the limiting 
rescaled processes, which is not trivial. Indeed, the It\^o formula 
for $\Phi$ at the limit rescaled problem does not fall neither in the classical setting,
due to the fact that $\Phi$ is only defined on $V$, nor in the extended 
version \cite[Prop.~3.3]{ScarStef-SDNL2}, due to the 
poor time-regularity of $B(\hat u)$ and the general Hilbert-space-valued
martingale noise. We show here that nonetheless
it is possible to write an exact energy equality at the rescaled limit.

\begin{prop}[It\^o formula in the limit]
\label{prop:ito}
In the current setting, it holds for every $\tau>0$ that 
\begin{align}
\label{en_id_lim}
\E \Phi(\hat u(\tau)) + 
\E\int_0^{\tau}(\hat v(\sigma), 
\partial_t\hat u^d(\sigma))_H\,\d\sigma
=\Phi(u_0) 
+ \frac12\E \int_0^\tau
\operatorname{Tr}[\hat L(\sigma, \hat u(\sigma))]
\hat t'(\sigma)\,\d\sigma\,.
\end{align}
\end{prop}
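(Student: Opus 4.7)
The target \eqref{en_id_lim} amounts to an It\^o formula for $\Phi$ along the limit process $\hat u$. As anticipated in the discussion preceding the statement, this does not fall into standard variational frameworks because of two obstructions: (i) $B(\hat u)$ is only in $L^1$ in $\tau$ (via \eqref{est5_hat} and the lower semicontinuity argument underlying \eqref{conv14_hat}), which rules out the usual $L^2$-type pairings; and (ii) $\hat M$ is merely a continuous square-integrable cylindrical martingale with tensor quadratic variation $Q_1 \hat t$, not a Wiener process.

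My plan is to regularize $\hat u$ in space via the operators $(P_n)_n$ from Assumption \textbf{B}, which map $H$ into $Z\subset D(B)$, converge to the identity on $H$ and $V$, and satisfy $\norm{B(P_n z)}_H \leq C_B(1+\norm{B(z)}_H)$ together with $B(P_n z) \to B(z)$ in $H$ for every $z\in D(B)$. Since $P_n$ is linear and bounded, it can be brought inside the stochastic integral in \eqref{eq1_hat}, giving
\begin{equation*}
P_n \hat u(\tau) = P_n u_0 + \int_0^\tau P_n \partial_t \hat u^d(\sigma)\,\d\sigma + \int_0^\tau P_n \hat G(\sigma, \hat u(\sigma))\,\d\hat M(\sigma)\,, \qquad \P\text{-a.s.}
\end{equation*}
with $P_n \hat u \in Z \subset D(B)$. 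On this regularized process the chain-rule pairing $(B(P_n \hat u), P_n \partial_t \hat u^d)_H$ and the It\^o corrector $\operatorname{Tr}\bigl[(P_n \hat G)(P_n \hat G)^* D_{\mathcal G}B(P_n \hat u)\bigr]$ are meaningful quantitatively, and $B(P_n \hat u)$ is uniformly bounded in $L^1_{\hat\cP}(\Omega; L^1(0,\hat T; H))$ by the growth estimate from Assumption \textbf{B} combined with \eqref{est5_hat}.

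The core step is to establish an It\^o formula for $\Phi(P_n \hat u)$ driven by the general cylindrical martingale $\hat M$. I treat the noise via the representation $\hat M = W \circ \hat t$ inherited from the construction (it survives the passage to the limit in \eqref{conv7_hat} by continuity of $W$ and pointwise convergence $\hat t_\eps \to \hat t$): together with the change-of-variable formula of Lemma \ref{lem:hat}(iv) and optional stopping at the $(\cF_t)$-stopping times $\hat t(\cdot,\tau)$ from Lemma \ref{lem:t}, this reduces the computation to a classical variational It\^o formula of Krylov-Rozovskii type on $[0,T]$ for the Wiener-driven process $t\mapsto P_n u(t)$, the It\^o correction then naturally acquiring the weight $\hat t'(\sigma)$ via the time-substitution. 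Taking expectation annihilates the martingale integral (after a standard localization using the uniform bounds on $B(P_n \hat u)$ and $P_n \hat G$) and yields
\begin{align*}
\E\Phi(P_n \hat u(\tau)) &= \Phi(P_n u_0) + \E \int_0^\tau (B(P_n \hat u(\sigma)), P_n \partial_t \hat u^d(\sigma))_H\,\d\sigma \\
&\quad + \tfrac12 \E \int_0^\tau \operatorname{Tr}\bigl[(P_n \hat G)(P_n \hat G)^* D_{\mathcal G}B(P_n \hat u)\bigr](\sigma)\,\hat t'(\sigma)\,\d\sigma\,.
\end{align*}

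Passing $n\to\infty$ then completes the proof: $\Phi(P_n u_0)\to\Phi(u_0)$ by continuity of $\Phi$ on $V$; $\E\Phi(P_n \hat u(\tau)) \to \E\Phi(\hat u(\tau))$ by the $\P$-a.s.\ convergence $P_n \hat u(\tau)\to \hat u(\tau)$ in $V$ (Assumption \textbf{B}) combined with the moment bound \eqref{est1_hat} via Vitali; the chain-rule term converges by a Vitali argument from $B(P_n \hat u)\to B(\hat u)$ in $H$ a.e., the $L^1$-domination $\norm{B(P_n \hat u)}_H \leq C_B(1+\norm{B(\hat u)}_H)$, and $P_n\partial_t\hat u^d \to \partial_t\hat u^d$ in $H$; and the trace term converges by continuity of $L$ on $[0,T]\times V$ from Assumption \textbf{G} together with Vitali. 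Substituting $\hat v = -B(\hat u)$ from \eqref{eq2_hat} finally yields \eqref{en_id_lim}. The principal obstacle is the derivation of the It\^o formula at the regularized level, where the two technical issues (only G\^ateaux smoothness of $\Phi$ and a general cylindrical martingale noise) must be handled simultaneously through the combination of spatial approximation by $P_n$ and the time-change reduction to the Wiener setting.
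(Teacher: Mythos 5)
Your overall strategy --- regularize $\hat u$ spatially via the operators $(P_n)_n$ from Assumption \textbf{B}, establish an It\^o-type energy identity for the regularized process $\hat u_n = P_n\hat u$, and then pass $n\to\infty$ using Vitali/dominated convergence, the weak continuity of $D_{\mathcal G}B$, and the bounds from Assumptions \textbf{B--G} --- is exactly the structure of the paper's proof, and your $n\to\infty$ limit passage is essentially identical to the one used there. However, there is a genuine gap in how you propose to obtain the regularized It\^o identity for $\hat u_n$.

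You claim that the representation $\hat M = W\circ\hat t$, combined with the change-of-variable formula of Lemma~\ref{lem:hat}(iv) and optional stopping, ``reduces the computation to a classical variational It\^o formula of Krylov--Rozovskii type on $[0,T]$ for the Wiener-driven process $t\mapsto P_n u(t)$.'' This reduction cannot work, because there is no limit process $u$ in original time. The change-of-variable identity
\[
  \int_0^\tau \hat Y(\sigma)\,\d\hat M(\sigma)=\int_0^{\hat t(\tau)}Y(s)\,\d W(s)
\]
requires the integrand to be of the form $\hat Y=Y\circ\hat t$ for some process $Y$ defined on $[0,T]$; in the viscous case this is guaranteed because $\hat t_\eps$ is a strictly increasing bijection onto $[0,T]$. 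In the limit one only has $\hat t'(\tau)+\|\partial_t\hat u^d(\tau)\|_H\le 1$ (Lemma~\ref{lem:hat_lim}), and $\hat t$ may well be constant on intervals of positive $\tau$-measure: those are precisely the jump segments that the parametrized-solution concept is designed to capture. On such a flat interval, $\hat u$ evolves through the drift $\partial_t\hat u^d$ while $\hat t$ is constant, so $\hat u(\sigma)$ is not a function of $\hat t(\sigma)$, and neither is $\hat G(\sigma,\hat u(\sigma))=G(\hat t(\sigma),\hat u(\sigma))$. Equation \eqref{eq1_hat} therefore cannot be ``undone'' into a Wiener-driven equation in original time. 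The representation $\hat M=W\circ\hat t$ is indeed correct, but it does not let you transfer the It\^o formula back to $[0,T]$.

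The paper avoids this entirely: it does not attempt any time-change reduction at the limit. Instead it invokes a direct It\^o formula for processes driven by a general continuous square-integrable cylindrical martingale with tensor quadratic variation $Q_1\hat t$ (``an elementary adaptation of \cite[Lem.~3.2]{ScarStef-SDNL2} to the case of continuous square-integrable martingale noise''). In that framework the weight $\hat t'(\sigma)$ in the corrector $\frac12\operatorname{Tr}[\hat G_n^*D_{\mathcal G}B(\hat u_n)\hat G_n]\hat t'$ arises from the quadratic variation of $\hat M$, not from a change of variables. To repair your proof you should replace the time-change reduction with such a martingale-driven It\^o formula applied directly in rescaled time to the $Z$-valued process $\hat u_n$; once that is in place, your $n\to\infty$ argument goes through as you describe.
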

\begin{proof}[Proof of Proposition~\ref{prop:ito}]
\UUU
  Let $\hat T>0$ and let $(P_n)_n$ be as in assumption {\bf B}.
  Using the subscript $n$ to denote the action of $P_n$, 
  one has from \eqref{eq1_hat} that
  \[
  \hat u_n(\tau) = \hat u^d_n(\tau)
  +\int_0^{\tau}\hat G_n(\sigma,\hat u(\sigma))\,\d\hat M(\sigma)
  \qquad\forall\,\tau\geq0\,, \quad \P\text{-a.s.}
  \]
  where $\hat u_n\in L^{p\ell}_\cP(\Omega; L^\infty(0,\hat T; Z))$
  by definition of $P_n$ and the regularity of $\hat u$. This implies that 
  $B(\hat u_n)\in L^{q\ell}_\cP(\Omega; L^\infty(0,\hat T; H))$
  again by assumption {\bf B}. Hence, 
  an elementary adaptation of the It\^o formula \cite[Lem.~3.2]{ScarStef-SDNL2} 
to the case of continuous square-integrable martingale noise yields
\begin{align*}
&\E\Phi(\hat u_n(\tau)) 
-\E\int_0^\tau(B(\hat u_n(\sigma)), \partial_t\hat u_n^d(\sigma))_H\,\d\sigma\\
&=\Phi(u_{0,n}) 
+ \frac12\E\int_0^\tau
\operatorname{Tr}[\hat G_n(\sigma, \hat u(\sigma))^*
D_{\mathcal G}B(\hat u_n(\sigma))
\hat G_n(\sigma, \hat u(\sigma))]
\hat t'(\sigma)\,\d\sigma\,.
\end{align*}
Since $B(\hat u)\in L^\ell_\cP(\Omega; L^1(0,\hat T; H))$,
one has from assumptions {\bf B--G}
and the dominated convergence theorem 
that
\begin{align*}
  \hat u_n \to \hat u \quad&\text{in } 
  L^{\ell p}_\cP(\Omega; L^{r}(0,\hat T; V))\quad\forall\,r\geq1\,,\\
  B(\hat u_n) \to B(\hat u) \quad&\text{in } L^{\ell}_\cP(\Omega; L^1(0,\hat T; H))\,,\\
  \hat u_n(\sigma)\to \hat u(\sigma) \quad&\text{in } L^{\ell p}_\cP(\Omega; V)
  \quad\forall\,\sigma\geq0\,,\\
  \hat G_n(\cdot, \hat u) \to \hat G(\cdot, \hat u)
  \quad&\text{in } L^{\ell}_\cP(\Omega; L^2(0,\hat T; \cL^2(U,V)))\,.
\end{align*}
This implies by the weak continuity of $D_{\mathcal G}B$ in assumption {\bf B} that 
\[
  D_{\mathcal G}B(\hat u_n)\to D_{\mathcal G}B(\hat u) \quad\text{in } \cL_w(V,V^*)\,,
  \quad\text{a.e.~in } \Omega\times(0,\hat T)\,,
\]
hence also from assumption {\bf G} and the dominated convergence theorem that 
\[
  \operatorname{Tr}[\hat G_n(\cdot, \hat u)^*
D_{\mathcal G}B(\hat u_n)
\hat G_n(\cdot, \hat u)]
\hat t'(\cdot) \to 
\operatorname{Tr}[\hat L(\cdot, \hat u)]
\hat t'(\cdot) \quad\text{in } L^1_\cP(\Omega; L^1(0,\hat T))\,.
\]
Moreover, recalling that $\partial_t\hat u^d\in 
L^\infty_\cP(\Omega\times(0,+\infty); H)$ we also have that 
\[
  \partial_t\hat u^d_n \wstarto 
  \partial_t\hat u^d \quad\text{in } L^\infty_\cP(\Omega\times(0,\hat T); H)\,.
\] 
Hence, we can pass to the limit in all terms as $n\to\infty$ and conclude
from equation \eqref{eq2_hat}.\EEE
\end{proof}

We are now in the position of arguing on
the energy identity \eqref{en_id_eps} by lower semicontinuity.
To this end,
thanks to the convergences \eqref{conv3_hat}, 
\eqref{conv8_hat}, \eqref{conv15_hat}, and
the lower semicontinuity of $\Phi$,
 for every  $\tau >0$  
we infer that
\begin{align*}
  &\limsup_{\eps\to0^+}
  \E\int_{0}^{\tau}\left( \hat v_\eps(\sigma), 
  \partial_t \hat u^d_\eps(\sigma)\right)_H\,\d \sigma \\
  &\leq \limsup_{\eps\to0^+}
  \E\Phi(  u_0 ) 
  +\frac12\limsup_{\eps\to0^+}\E\int_{0}^{\tau}
  \operatorname{Tr}
  \left[\hat L_\eps(\sigma, \hat u_\eps(\sigma))\right]\hat t_\eps'(\sigma)
  \,\d \sigma
  -\liminf_{\eps\to0^+}\E\Phi( \hat u_\eps( \tau)))\\
  &\leq\E\Phi(  u_0 )
  +\frac12\E\int_{0}^{\tau}
  \operatorname{Tr}\left[\hat L(\sigma, \hat u(\sigma))\right]\hat t'(\sigma)\,\d \sigma
  -\E\Phi( \hat u( \tau))\,.
\end{align*}
 where we have used the pointwise weak convergence 
\eqref{conv13_hat_due} of $\hat u_\eps$ as well as the weak lower semicontinuity  of
$\Phi$ in $V$. 
The above right-hand side can in turn be rewritten thanks to the limit energy identity \eqref{en_id_lim} as 
\begin{equation}
  \limsup_{\eps\to0^+}
  \E \int_0^\tau  \left( \hat v_\eps(\sigma), 
  \partial_t \hat u^d_\eps(\sigma)\right)_H\,\d \sigma
  \leq 
  \E \int_0^\tau  \left( \hat v(\sigma), 
  \partial_t \hat u^d(\sigma)\right)_H\,\d \sigma
  \qquad\forall\,0\leq \tau \, . 
\label{eq:limsup}
\end{equation}

In order to conclude for $\hat v\in
\partial\Psi_{\norm{\cdot}}(\partial_t\hat u)$ a.e. in
$\Omega\times\erre_+$, we recall that
\begin{equation}
  \E\int_0^\tau\left( \hat v_\eps(\sigma) , 
 \tilde w (\sigma)- \partial_t \hat u^d_\eps(\sigma) \right)_H \,\d \sigma \leq \E\int_0^\tau\haz
\Psi_\eps (\tilde w (\sigma)) \,\d \sigma- \E\int_0^\tau\haz \Psi_\eps (\partial_t \hat
u^d_\eps(\sigma))\,\d \sigma\label{eq:sotto}
\end{equation}
for $\tilde w \in L^2_{\hat \cP} (\Omega; L^2(0,T;H))$. Fix now $w \in L^2_{\hat \cP}
(\Omega; L^2(0,T;H))$ such that $\Phi_{\| \cdot \|}(w) <+\infty$  a.e. in
  $\Omega\times\erre_+$.  For all 
$\mu>0$ let
$w^\mu$ be the unique solution of the singular perturbation problem
$$w^\mu + \mu^2 F(w^\mu) \ni w \ \ \quad
\text{a.e. in}\ \ \Omega\times\erre_+\, $$
where $F: V \to 2^{V^*}$ stands for the duality map. 
Note that $w^\mu \in   L^2_{\hat \cP} (\Omega; L^2(0,T;V))$ and $\| w^\mu
\|_H \leq \| w\|_H$ a.e. in
  $\Omega\times\erre_+$. Choose now $\tilde w=w^\mu$ in inequality
  \eqref{eq:sotto} and pass to the limsup for $\eps\to 0$. Also using
  convergences \eqref{conv6_hat} and \eqref{conv14_hat},
  the  Mosco-convergence Lemma~\ref{lem:mosco} and the limsup
  inequality \eqref{eq:sotto2} we get 
\begin{equation}
  \E\int_0^\tau\left( \hat v (\sigma) , 
 w^\mu(\sigma) - \partial_t \hat u^d(\sigma) \right)_H \,\d \sigma \leq \E\int_0^\tau
\Psi_{\|\cdot \|} (w^\mu(\sigma)) \,\d \sigma- \E\int_0^\tau \Psi_{\|\cdot \|} (\partial_t \hat
u^d(\sigma))\,\d \sigma\,.\label{eq:sotto2}
\end{equation}
By passing to the limit as $\mu \to 0$ in the latter, recalling
that $w^\mu \to w $ strongly in $L^2_{\hat \cP} (\Omega; L^2(0,T;H))$, we
prove that indeed \EEE
\[
\hat v\in \partial\Psi_{\norm{\cdot}}(\partial_t\hat u) \quad\text{a.e.~in } 
\Omega\times\erre_+\,.
\]

In  order to conclude we are left to show that 
the relation 
\eqref{eq_hat} holds with the equality, and  not only 
with the inequality as  in Lemma~\ref{lem:hat_lim}.
To this end, given
any $\tilde w \in  L^2_{\hat \cP} (\Omega; L^2(0,T;V))$ and any 
$0< \tau $,
by \eqref{eq:limsup} we have that
\begin{align}
  &\limsup_{\eps\to0^+} \E \int_0^\tau  
  \Psi_{\|\cdot \|} (\partial_t \hat
    u^d_\eps(\sigma)) \,\d \sigma \nonumber\\
  &\quad \leq 
  \limsup_{\eps\to0^+}\Bigg( \E \int_0^\tau  
  \Psi_{\|\cdot
    \|} (\tilde w(\sigma)) \,\d \sigma
     -  \E \int_0^\tau  
  (\hat v_\eps(\sigma), \tilde w(\sigma) - \partial_t
  \hat u^d_\eps(\sigma))_H\,\d \sigma\Bigg) \nonumber\\
  & \quad  \leq  \E \int_0^\tau 
  \Psi_{\|\cdot
    \|} (\tilde w(\sigma)) \,\d \sigma
  - \E \int_0^\tau 
     (\hat v(\sigma) , \tilde w(\sigma) - \partial_t
  \hat u^d (\sigma))_H\,\d \sigma\,. \label{eq:sotto3}
\end{align}
By density, the same inequality holds for all $\tilde w \in  L^2_{\hat \cP}
(\Omega; L^2(0,T;H))$. Hence, one can choose $\tilde w = \partial_t \hat
u^d$ in \eqref{eq:sotto3} in order to get that 
\beq\label{eq:sotto4}
\limsup_{\eps\to0^+} 
\E \int_0^\tau  
\| \partial_t \hat u^d_\eps(\sigma) \|_H\,\d\sigma
\leq 
\E \int_0^\tau  
\| \partial_t \hat u^d(\sigma) \|_H \, \d \sigma
\qquad\forall\, 0\leq\tau\,.  
\eeq
At this point, 
integrating the relation \eqref{est2_hat}
$\hat t'_\eps + \|\partial_t \hat u_\eps\|_H=1$ we obtain that 
\[
  \E\left[\frac{\hat t_\eps( \tau ) }{ \tau } \right]
  + \frac1{ \tau }\E\int_0^\tau 
  \norm{\partial_t\hat u_\eps(\sigma)}_H\,\d \sigma=1
  \qquad\forall\,0\leq \tau \,,
\]
so that \eqref{eq:sotto4} yields, letting $\eps\to0^+$,
\[
  \E\left[\frac{\hat t( \tau ) }{ \tau } \right]
  + \frac1{ \tau }\E\int_0^\tau 
  \norm{\partial_t\hat u(\sigma)}_H\,\d \sigma\geq 1
  \qquad\forall\,0\leq \tau \,.
\]
 In particular, we have proved that
\[ \E\int_0^\tau  (\hat t '(\sigma) + 
  \norm{\partial_t\hat u(\sigma)}_H-1)\,\d \sigma\geq 0
  \qquad\forall\,0\leq\UUU \tau \,.
\]
On the other hand,  by Lemma~\ref{lem:hat_lim} we already know that 
$ \hat t' +\norm{\partial_t \hat u}_H-1\leq0$ almost everywhere,
this necessarily implies that 
\[
  \hat t'(\tau) + \norm{\partial_t \hat u(\tau)}_H =1 
  \qquad\text{for a.e.~}\tau>0\,, \quad\P\text{-a.s.}
\]
We have hence checked that 
  \eqref{eq_hat} is actually an
  equality. This concludes the proof of Theorem~\ref{thm1}.


\section*{Acknowledgement}
LS was funded by the Austrian Science Fund (FWF) through the
Lise-Meitner project M\,2876. US is partially supported by 
the Austrian Science Fund (FWF) through projects  F\,65, W\,1245,  I\,4354, I\,5149, and P\,32788, and by the OeAD-WTZ project CZ
01/2021. 
\UUU
LS is member of Gruppo Nazionale 
per l'Analisi Matematica, la Probabilit\`a 
e le loro Applicazioni (GNAMPA), 
Istituto Nazionale di Alta Matematica (INdAM), 
and gratefully acknowledges financial support 
through the project CUP\_E55F22000270001.
The present research has also been supported by 
the MUR grant  ``Dipartimento di
eccellenza 2023--2027'' for Politecnico di Milano. The careful
reading and the interesting remarks by an anonymous referee on a previous
version of the paper are gratefully acknowledged. 
\EEE


\bibliographystyle{abbrv}


\end{document}